\documentclass[12pt]{amsart}

\usepackage{amssymb,mathrsfs,amsmath,amsthm,color,bm,mathtools,bbm,wasysym,cases,mathdots,mathtools}
\usepackage[pagebackref]{hyperref}
\usepackage[noadjust]{cite}

\usepackage{enumerate}
\usepackage[centering]{geometry}
\allowdisplaybreaks

\newtheorem{theorem}{Theorem}[section]
\newtheorem*{theorem*}{Theorem}

\newtheorem*{question*}{Question}

\newtheorem*{conjecture*}{Conjecture}
\newtheorem{lemma}[theorem]{Lemma}
\newtheorem{proposition}[theorem]{Proposition}
\newtheorem{corollary}[theorem]{Corollary}
\newtheorem{definition}[theorem]{Definition}
\theoremstyle{remark}
\newtheorem{remark}{Remark}
\numberwithin{equation}{section}

\DeclareMathOperator{\hdim}{\dim_H}

\newcommand{\fldim}{\dim_{l^1}}

\newcommand{\SL}{\operatorname{SL}}

\newcommand{\SO}{\operatorname{SO}}
\newcommand{\inj}{\mathrm{inj}}
\newcommand{\Ad}{\operatorname{Ad}}
\newcommand{\VWA}{\operatorname{VWA}}

\newcommand{\qaq}{\mathrm{\quad and\quad}}
\newcommand{\dif}{{\, \mathrm d}}

\newcommand{\Q}{\mathbb Q}
\newcommand{\N}{\mathbb N}
\newcommand{\I}{\mathbb I}
\newcommand{\R}{\mathbb R}
\newcommand{\Z}{\mathbb Z}
\newcommand{\cha}{\mathbbm{1}}
\newcommand{\lm}{\mathcal L}
\renewcommand{\hm}{\mathcal H}

\newcommand{\hc}{\mathcal H_\infty}

\newcommand{\cx}{\mathcal X}

\newcommand{\cq}{\mathcal Q}

\newcommand{\cd}{\mathcal D}
\newcommand{\cb}{\mathcal B}

\newcommand{\cf}{\mathcal F}

\newcommand{\cg}{\mathcal G}

\newcommand{\cs}{\mathcal S}

\newcommand{\fg}{\mathfrak{g}}

\newcommand{\scg}{\mathscr G}
\newcommand{\scgd}{\mathscr G_{\mathrm{dim}}}
\newcommand{\scgm}{\mathscr G_{\mathrm{mea}}}

\newcommand{\bb}{\mathbf{b}}

\newcommand{\bd}{\mathbf{d}}
\newcommand{\bx}{\mathbf{x}}

\newcommand{\bp}{\mathbf{p}}

\newcommand{\by}{\mathbf{y}}

\newcommand{\bzero}{\mathbf{0}}

\newcommand{\bxi}{\bm\xi}

\newcommand{\boa}{{\bm\omega}}
\newcommand{\bta}{{\bm\theta}}

\newcommand{\bmeta}{\bm\eta}
\newcommand{\ve}{\varepsilon}

\begin{document}
\title[Jarn\'ik-typetheorem for self-similar sets]{Jarn\'ik-type theorem for self-similar sets}
\author{Yubin He}

\address{Department of Mathematics, Shantou University, Shantou, Guangdong, 515063, China}

\email{ybhe@stu.edu.cn}

	\author{Lingmin Liao}

	\address{School of Mathematics and Statistics, Wuhan University, Wuhan, Hubei 430072, China}

	\email{lmliao@whu.edu.cn}

\subjclass[2020]{11J83, 28A80, 11K55, 28A78.}

\keywords{Metric Diophantine approximation, self-similar sets, inhomogeneous approximation, Hausdorff measure}
\begin{abstract}
Let $K\subset\R^d$ be a compact subset equipped with a $\delta$-Ahlfors regular measure $\mu$.
For any $\tau>1/d$ and any ``inhomogeneous" vector $\bta\in\R^d$, let $W_d(\psi_\tau,\bta)$ denote the set of $(\psi_\tau,\bta)$-well approximable numbers,
where $\psi_\tau(q)=q^{-\tau}$. Assuming a local estimate for the $\mu$-measure of the intersections of $K$ with the neighborhoods of ``rational" vectors $(\bp+\bta)/q$,
we establish a sharp upper bound for the Hausdorff dimension of $K\cap W_d(\psi_\tau,\bta)$,
together with some nontrivial lower bounds when $\tau$ is below a certain threshold.
One of the lower bounds becomes sharp in the one-dimensional homogeneous case ($d=1, \theta=0$) for a class of sufficiently thick self-similar sets $K$,
and moreover $K\cap W_1(\psi_\tau,0)$ has full $(\delta+\frac{2}{1+\tau}-1)$-Hausdorff measure. These results have several applications:
\begin{enumerate}
	\item the set of homogeneous very well approximable numbers has full Hausdorff dimension within strongly irreducible self-similar sets in $\R^d$, extending a recent result of Chen [arXiv: 2510.17096];
	\item the set of inhomogeneous very well approximable numbers has full Hausdorff dimension within sufficiently thick missing digits sets in $\R$, affirmatively answering a question posed by Yu [arXiv: 2101.05910].
\end{enumerate}
Our applications build on the seminal works of Yu [arXiv: 2101.05910] and B\'enard, He and Zhang [arXiv:2508.09076].

We also provide some non-trivial missing digits set $K\subset[0,1]^d$ whose intersection with $W_d(\psi_\tau,\bzero)$ has full $(\delta+\frac{1+d}{1+\tau}-d)$-Hausdorff measure.
\end{abstract}
\maketitle

\section{Introduction}
Let $d\ge 1$ and $\lm_d$ be the $d$-dimensional Lebesgue measure. We write $|\cdot|$ for the standard supremum norm in $\R^d$. For any $\bx\in\R^d$, let $\|\bx\|$ denote the distance from $\bx$ to the nearest integer vector in $\Z^d$.
Let  $\psi:\mathbb{N}\rightarrow\mathbb{R}_+$ be a monotonic function and $\bta\in\R^d$. The set of $(\psi,\bta)$-well approximable numbers is defined by
\[W_d(\psi,\bta):=\{\bx\in\R^d:\|q\bx-\bta\|<\psi(q)\text{ for infinitely many }q\in\Z\}.\]
 If $\psi(q)=q^{-\tau}$, then we write $\psi_\tau$ in place of $\psi$. Since $W_d(\psi,\bta)$ is invariant under integer translations, we may restrict our attention to $\bx\in[0,1]^d$.
 The metric properties of the set $[0,1]^d\cap W_d(\psi,\bta)$ have been extensively studied in
 Diophantine approximation.
 A fundamental result in this direction is Khintchine’s theorem, which provides
 a zero--one law for the Lebesgue measure of $[0,1]\cap W_1(\psi,0)$ in terms of
 the convergence or divergence of a natural volume sum.

 \medskip

 \noindent\textbf{Khintchine's theorem} (\cite{Khintmeasure}). \emph{Let $\psi:\N\to\R_+$ be a monotonic function. Then,
 \[
 \lm_1\big([0,1]\cap W_1(\psi,0)\big)=
 \begin{cases}
 	0 &  \sum_{q=1}^{\infty}\psi(q)<\infty,\\[1ex]
 	1 &  \sum_{q=1}^{\infty}\psi(q)=\infty.
 \end{cases}
 \]}

 While Khintchine theorem gives a complete description of the Lebesgue measure
 of $[0,1]\cap W_1(\psi,0)$, it does not capture the finer geometric structure of such
 set in the null case.
 In order to quantify the size of $[0,1]\cap  W_1(\psi,0)$ beyond Lebesgue measure,
 it is natural to consider its Hausdorff dimension or measure.

 A seminal result in this direction is Besicovitch--Jarn\'ik's theorem, which
 determines the Hausdorff dimension of the sets of well approximable numbers for monotonic functions.

 \noindent\textbf{Besicovitch--Jarn\'ik's theorem} (\cite{BesicovitchJLMSDiophantine} and \cite{Jarnikdimension}).
 \emph{Let $\psi:\N\to\R_+$ be a monotonic function. Then,
 \[
 \hdim \big([0,1]\cap W_1(\psi,0)\big)
 = \frac{2}{1+\tau(\psi)},
 \]
 where
 \[\tau(\psi):=\liminf_{q\to\infty}\frac{-\log\psi(q)}{q}\]
 is the lower order of $\psi$ at infinity.
}

 The above dimension result was further extended by Jarn\'ik \cite{Jarnikmeasure} to a statement about
 the Hausdorff measure of well approximable sets.
 More precisely, one can determine the $s$-dimensional Hausdorff measure of
 $[0,1]\cap W_1(\psi_\tau,0)$ for all $s>0$.
 In this paper, we will adopt a special case of Jarn\'ik’s theorem that suffices
 for our purposes.

 \noindent\textbf{Jarn\'ik's theorem} (\cite{Jarnikmeasure}). \emph{Let $\tau>1$. Then,
 	\[
 	\hm^s\big([0,1]\cap W_1(\psi_\tau,0)\big)=
 	\begin{cases}
 		0 & \displaystyle s>\frac{2}{1+\tau},\\[1ex]
 		\infty & \displaystyle s\le\frac{2}{1+\tau}.
 	\end{cases}
 	\]}

 	Jarn\'ik's theorem turns out to be very useful in describing the Hausdorff measure of the sets of numbers with a given irrationality exponent.

 	For a point $\bx\in\R^d$, its {\em irrationality exponent} $\tau(\bx)$ is defined by
 	\[
 	\tau(\bx)
 	:= \sup\big\{\tau>0 : \bx\in W_d(\psi_\tau,0)\big\}.
 	\]
 With this definition in hand, we define the set of numbers with irrationality
 exponent equal to a given value $\tau$ by
 \[
 E_d(\tau,\bzero)
 := \big\{\bx\in\R^d: \tau(\bx)=\tau\big\}.
 \]

 \noindent\textbf{A corollary of Jarn\'ik's theorem.} \emph{Let $\tau>1$. Then,
 	\[
 	\hm^s\big([0,1]\cap E_1(\tau,0)\big)=
 	\begin{cases}
 		0 & \displaystyle s>\frac{2}{1+\tau},\\[1ex]
 		\infty & \displaystyle s\le\frac{2}{1+\tau}.
 	\end{cases}
 	\]}

For a proof of this corollary, see \cite{BereDickVelaniexact}. These one-dimensional homogeneous results, which provide a foundation for understanding the metric properties of well-approximable sets, have also been extended to higher dimensions $d \ge 2$ and to the inhomogeneous setting $\boldsymbol{\theta} \neq \boldsymbol{0}$ \cite{SchmidtmeasureTAMS, AllenRaminhomogeneousIMRN,BugeaudinhomogeneousJAM, BoroshFraenhigherJarnik, BereDickVelaniexact}.

 	In recent years, increasing attention has been paid to the study of intersections of $W_d(\psi,\bta)$ with fractal sets.
 	This line of investigation was motivated by a question of Mahler \cite{Mahler}:

 	\noindent\textbf{Question} (Mahler, \cite{Mahler}).
 	\emph{How closely can irrational elements of Cantor's set be approximated by rational
 	numbers?}

Although Mahler himself did not pose a more specific question, this nonetheless inspired subsequent studies on the metric properties of the intersection of $W_d(\psi,\bta)$ with self-similar
sets, of which the Cantor set is a special case. Recent years have witnessed significant progress in this direction,
particularly with the establishment of Khintchine-type theorems for self-similar measures and Besicovitch--Jarn\'ik-type theorems for missing digits sets. In the following, we discuss the problem in two parts, focusing separately on
the self-similar measure and the Hausdorff dimension.

For the self-similar measure, Kleinbock, Lindenstrauss and Weiss \cite{KleinLindWeissSelect} established
that if $\mu$ is a friendly measure—which includes strongly irreducible self-similar measures—then
$\mu(W_d(\psi_\tau,\mathbf{0})) = 0$ for all $\tau > 1/d$. These results were later extended by Pollington and Velani \cite{PollingtonVelaniSelect} to arbitrary functions $\psi$
for which a certain $\mu$-volume sum converges. Building on this foundation, Einsiedler, Fishman and Shapira \cite{EinFishShafractalsGAFA}, as well as Simmons and Weiss \cite{SimmonsWeissfractalInvent},
demonstrated that $\mu(W_d(\varepsilon \psi_{1/d},\mathbf{0}))$ has full measure for any $\varepsilon > 0$,
with $\mu$ being either a self-similar measure or an ergodic measure.
A complete Khintchine-type theorem for a large class of self-similar measures was established by Khalil and Luethi \cite{KhalilLuethifractalInvent}, as well as by Datta and Jana \cite{DattaJanafractals}, using different techniques.
The case of all strongly irreducible self-similar measures was finally settled by B\'enard, He and Zhang \cite{BenHeZhangfractalJAMS, BenHeZhangfractalRd}:

\noindent\textbf{Theorem BHZ} (\cite{BenHeZhangfractalJAMS,BenHeZhangfractalRd}). \emph{Let $\mu$ be strongly irreducible self-similar measure. Let $\psi:\N\to\R_+$ be a monotonic function. Then,
	\[
	\mu \big(W_d(\psi,\bzero)\big)=
	\begin{cases}
		0 &  \sum_{q=1}^{\infty}\psi(q)^d<\infty,\\[1ex]
		1 &  \sum_{q=1}^{\infty}\psi(q)^d=\infty.
	\end{cases}
	\]}

In the inhomogeneous setting, Chow and Yu \cite{ChowYufractals} established that the same zero--one law holds for any $\boldsymbol{\theta} \in \mathbb{R}^d$, not just for $\boldsymbol{\theta} = \boldsymbol{0}$, provided that $\fldim \mu > d - \frac{d-1}{d+1}$. The precise definition of $\fldim \mu$ will be given in Section \ref{ss:missing digit}.

For the Hausdorff dimension, comparatively little is known, and the corresponding problems
appear to be at least as challenging as those encountered in the context of Khintchine-type
theorems for self-similar measures. Levesley, Salp and Velani \cite{LevSalpVelaniMahlerMathAnn} first established that, for any missing digits set $K$, the Hausdorff dimension of its
intersection with $W_1(\psi_\tau,0)$ is at least $(\hdim K)/(1+\tau)$. In fact, the Cantor-type subset constructed in their proof only guarantees that points are well-approximated by rational points lying within the missing digits set $K$ itself. This type of problem is also referred to as {\em intrinsic Diophantine approximation}, which focuses on approximating points in a fractal set using rational points contained within the set rather than arbitrary rational points in $\Q$. For more related research, see, for example \cite{DaviaudIntrinsic,FishSimIntrinsicPLMS,LiLiWuIntrinsicMPCPS,LiVelaniWangIntrinsic,TanWangWuIntrinsicMZ}.

Since the rational points contained within the missing digits set $K$ are far fewer than those in $\R$, the lower bound $(\hdim K)/(1+\tau)$ is far from optimal. Levesley, Salp and Velani \cite{LevSalpVelaniMahlerMathAnn} further posed the following conjecture, which has recently been fully resolved by Chen \cite{Chenfractal}.
Define the set of \emph{very well approximable} points by
\[
\VWA_d(\bta) := \bigcup_{\tau > 1/d} W_d(\psi_\tau, \bta).
\]

\noindent\textbf{Theorem C} (\cite{Chenfractal}). \emph{Let $K$ be a self-similar set on $\R$. Then,
	\[
	\hdim\big(K\cap\VWA_1(0)\big)=\hdim K.
	\]}

Let $K_{1/3}$ be the middle-third Cantor set. Bugeaud and Durand \cite{BugeaudDurandJEMS} conjectured that the Hausdorff dimension of $K_{1/3}\cap W_1(\psi_\tau,0)$ should be given by the following explicit formula.

\noindent\textbf{Conjecture BD} (\cite{BugeaudDurandJEMS}). \emph{For any $\tau>1$,
	\[
	\hdim\big(K_{1/3}\cap W_1(\psi_\tau, 0)\big)=\max\bigg\{\frac{\log 2}{\log 3}+\frac{2}{1+\tau}-1,\ \frac{\log 2/\log 3}{1+\tau}\bigg\}.
	\]}

	For any $\tau > 1$ lying below a certain threshold, Chen \cite{Chenfractal} provided a sharp upper bound and a comparatively weaker lower bound for the Hausdorff dimension of $K\cap W_1(\psi_\tau, 0)$ for all self-similar sets $K$, not only for the middle-third Cantor set.
Another major advance was made by Yu \cite{Yufractals}, who in fact completed this line of research several years before Chen \cite{Chenfractal} by showing the following theorem. His results apply to sufficiently thick missing digits sets, but do not cover the middle-third Cantor set.

 \noindent\textbf{Theorem Y} (\cite{Yufractals}). \emph{Let $K\subset\R$ be a missing digits set, and let $\mu$ be the restriction of $\hm^{\hdim K}$ to $K$.
 	 If $\fldim\mu>1/2$, then
 	\begin{equation}\label{eq:Yuvwa}
 		\hdim\big(K\cap\VWA_1(0)\big)=\hdim K.
 	\end{equation}
 	Suppose further that $\hdim K\fldim\mu>1/2$, then there exists a constant $\alpha>1$ such that for any $\tau\in(1,\alpha)$,
 	\begin{equation}\label{eq:Yuwell}
 			\hdim\big(K\cap W_1(\psi_\tau,0)\big)=\hdim K+\frac{2}{1+\tau}-1.
 	\end{equation}
 	}

Let $\mu_{1/3}$ be the restriction of $\hm^{\frac{\log 2}{\log 3}}$ to $K_{1/3}$. It is currently unclear whether $\fldim \mu_{1/3}>1/2$, and consequently \eqref{eq:Yuvwa} in Theorem~Y cannot be applied to  $K_{1/3}$. Although the result for the Hausdorff dimension of $K_{1/3}\cap \VWA_1(0)$ has already been established by Chen \cite{Chenfractal}, Yu's approach still has its merits, and with suitable refinements, it can be extended to the inhomogeneous setting (see Theorem \ref{t:example2} (1)). The second conclusion \eqref{eq:Yuwell} in Theorem Y does not hold for $K_{1/3}$, since $\fldim\mu_{1/3}\le \hdim K_{1/3}$ (see \cite[Lemma 1.4 (1) and \S 3.6]{Yufractals}), and hence $\hdim K_{1/3}\fldim\mu_{1/3}\le(\frac{\log 2}{\log 3})^2\approx 0.63093^2<1/2$. Later, Chow, Varj\'u and Yu \cite{ChowVarjuYufractals} showed that \eqref{eq:Yuvwa} and \eqref{eq:Yuwell} in Theorem Y can be respectively applied to 5-ary and 7-ary missing digits sets, each with exactly one missing digit.

The starting point of this paper is inspired by the aforementioned results on Hausdorff dimension. Our goal is to develop a general framework that not only encompasses these previous results
but also allows for their natural generalization to higher dimensions, to the inhomogeneous setting,
and to results concerning the corresponding Hausdorff measures.

Throughout, the symbols $\ll$ and $\gg$ will be used to indicate an inequality with an unspecified positive multiplicative constant. By $a\asymp b$ we mean $a\ll b$ and $b\ll a$. Let $|E|$ denote the diameter of a set $E$. Let $K\subset\R^d$ be a compact subset equipped with a non-atomic probability measure $\mu$.
Suppose there exists a constant $\delta > 0$ such that, for any $\bx \in K$ and any $0 < r < |K|$,
\[
\mu\bigl(B(\bx,r)\bigr) \asymp r^\delta,
\]
where the implied constants are independent of $\bx$ and $r$.
A measure satisfying this condition is said to be \emph{$\delta$-Ahlfors regular}.

For any $\bta\in\R^d$, $Q>1$ and $\eta>0$, define
\[\begin{split}
	A_Q(\eta,\bta):&=\{\bx\in\R^d:\|q\bx-\bta\|<\eta\text{ for some $Q\le q<2Q$}\}\\
	&=\bigg\{\bx\in\R^d:\bigg|\bx-\frac{\bp+\bta}{q}\bigg|<\frac{\eta}{q}\text{ for some $\frac{\bp}{q}\in\Q^d$ with $Q\le q<2Q$}\bigg\}.
\end{split}\]

We now introduce a local counting property which captures the
distribution of the sets $A_Q(\eta,\bta)$ at small scales
with respect to the measure $\mu$.

\begin{definition}\label{d:local counting property}
	Let $\alpha>1/d$, $\beta>0$ and $\bta\in\R^d$. We say that the measure $\mu$ satisfies the
	\emph{$(\alpha,\beta,\boldsymbol{\theta})$-local counting property}
	if for all sufficiently large $Q\ge 1$ (depending only on $\alpha$ and $\beta$), all $Q^{-\alpha}\le\eta\le Q^{-1/d}$,
	and all balls $B$ whose centres lie in the support of $\mu$ and satisfy
	$|B|\ge Q^{-\beta}$, one has
	\begin{equation}\label{eq:local counting}
	\mu\bigl(B\cap A_Q(\eta,\bta)\bigr)
	\asymp \mu(B)\cdot Q\eta^d,
\end{equation}
	where the implied constants are independent of $Q$, $\eta$ and $B$.
\end{definition}
\begin{remark}\label{r:alphabeta}
	The parameters $\alpha$ and $\beta$ in Definition~\ref{d:local counting property}
	are not independent.
	To see this, suppose for simplicity that $\mu$ is a $\delta$-Ahlfors regular measure.
Applying \eqref{eq:local counting} with $\eta=Q^{-\alpha}$, we in particular obtain that $B\cap A_Q(Q^{-\alpha},\boldsymbol{\theta})\neq\varnothing$. Hence,
	$B\cap A_Q(4Q^{-\alpha},\boldsymbol{\theta})$ contains a ball of radius
	$Q^{-1-\alpha}$.
	Since $\mu$ is $\delta$-Ahlfors regular, it follows that
	\[
	\mu\bigl(B\cap A_Q(4Q^{-\alpha},\boldsymbol{\theta})\bigr)
	\gg Q^{-\delta(1+\alpha)}.
	\]
	On the other hand, the $(\alpha,\beta,\boldsymbol{\theta})$-local counting property yields
	\[
	\mu\bigl(B\cap A_Q(4Q^{-\alpha},\boldsymbol{\theta})\bigr)
	\asymp \mu(B)\cdot Q^{1-\alpha d}.
	\]
	Combining these estimates and using the lower bound $|B|\ge Q^{-\beta}$,
	together with the $\delta$-Ahlfors regularity of $\mu$, we obtain the necessary condition
	\[
	Q^{-\beta\delta}\cdot Q^{1-\alpha d}\gg Q^{-\delta(1+\alpha)}\quad\Longrightarrow\quad \alpha\le \frac{1+\delta-\beta\delta}{d-\delta}.
	\]
\end{remark}

The following theorem provides a sharp upper bound and a reasonably good lower bound  for the Hausdorff dimension
of well-approximable points on a compact set $K\subset \R^d$ whose measure, supported on $K$, satisfies the $(\alpha,\beta,\bta)$-local counting property.
\begin{theorem}\label{t:main}
	Let $K\subset\R^d$ be a compact subset equipped with a $\delta$-Ahlfors regular probability measure $\mu$. Suppose that $\mu$ satisfies the $(\alpha,\beta,\boldsymbol{\theta})$-local counting property for some $1/d<\alpha<\frac{1+\delta-\beta\delta}{d-\delta}$, $0<\beta<1$ and $\bta\in\R^d$. Let $\psi$ be a monotonic approximation function. If $1/d<\tau(\psi)<\alpha$, then
	\begin{equation}\label{eq:upper bound}
		\hdim \big(K\cap W_d(\psi,\bta)\big)\le \delta+\frac{d+1}{1+\tau(\psi)}-d,
	\end{equation}
	and
	\begin{equation}\label{eq:lower bound}
\begin{split}
	&\hdim\big(K\cap W_d(\psi,\bta)\big)\\
	\ge& \min\biggl\{\delta-\frac{(\tau(\psi) d-1)(\delta+\frac{1+d}{1+\tau(\psi)}-d)}{\beta\delta},\ \delta+\frac{1+d}{1+\tau(\psi)}-d\biggr\}.
\end{split}
	\end{equation}
	Moreover, if $\delta>d-1$, $\bta=\bzero$ and $\psi(q)\ge cq^{-\tau}$ with $1/d<\tau<\min\{\alpha,\frac{\delta+2-d}{2d-\delta-1}\}$ for infinitely many $q$. Then,
	\begin{equation}\label{eq:lower measure}
		\hm^s\big(K\cap W_d(\psi,\bf 0)\big)=\infty,
	\end{equation}
	where
	\begin{equation}\label{eq:s}
		s=\min\bigg\{\delta-\frac{(\tau-\frac{1}{d})(2d-\delta-1)(\delta+\frac{1+d}{1+\tau}-d)}{\beta\delta},\ \delta+\frac{1+d}{1+\tau}-d\bigg\},
	\end{equation}
	is hence a lower bound for $\hdim (K\cap W_d(\psi,\bf 0))$.
\end{theorem}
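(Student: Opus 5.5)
For the upper bound \eqref{eq:upper bound} I will use a covering argument driven by the local counting property. Fix $\tau'\in(\tau(\psi),\alpha)$ with $\tau'$ close to $\tau(\psi)$. Along a sequence of $q$ we have $\psi(q)\le q^{-\tau'}$ eventually, so by monotonicity $K\cap W_d(\psi,\bta)\subset\limsup_{Q=2^k}A_Q(Q^{-\tau'},\bta)$, up to the usual dyadic adjustment. Fix $k$ and put $Q=2^k$. I will cover $K\cap A_Q(Q^{-\tau'},\bta)$ in two ways.

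In the first cover, take $B=K$ and $\eta=Q^{-\tau'}$ in \eqref{eq:local counting}, which is allowed because $\tau'<\alpha$. This gives $\mu(K\cap A_Q)\ll Q^{1-\tau' d}$. Each ball of radius $r=Q^{-1-\tau'}$ centred in $K$ has measure $\asymp r^\delta$, so a Vitali-type argument shows that $K\cap A_Q$ is covered by $\ll Q^{1-\tau'd}r^{-\delta}$ such balls. The resulting $s$-cost is $Q^{1-\tau'd+(1+\tau')(\delta-s)}$.

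The second cover is coarser. Cover by balls of radius $\rho=Q^{-1-1/d}$, using the $\eta=Q^{-1/d}$ endpoint. Each rational point $(\bp+\bta)/q$ has a $Q^{-1-\tau'}$-neighbourhood, and each $\rho$-ball contains many of these. Counting the rationals with the measure estimate (there are $\asymp Q^{d+1}$ of them), the natural optimisation is to cover each $\eta/q$-neighbourhood meeting $K$ at scale $Q^{-1-\tau'}$. Summing $\sum_k$ then converges exactly when $s>\delta+\frac{d+1}{1+\tau'}-d$. To get this exponent, I will use the local counting property at the intermediate scale $\eta=Q^{-\tau'}$ inside balls $B$ of radius $Q^{-\beta}$. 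This shows that the number of $Q^{-1-\tau'}$-balls needed, weighted by $|B|^{?}$, matches the Lebesgue heuristic $Q^{d+1}\cdot(Q^{-1-\tau'})^{d-\delta}\cdot Q^{-(1+\tau')s}$. Concretely, I expect the cover of $K\cap A_Q$ to have cardinality $\ll Q^{1-\tau'd}Q^{(1+\tau')\delta}$. Setting the exponent $1-\tau'd+(1+\tau')(\delta-s)<0$ gives exactly $s>\delta+\frac{d+1}{1+\tau'}-d$, since $1-\tau'd=d+1-d(1+\tau')$. So the first cover already suffices, and letting $\tau'\downarrow\tau(\psi)$ proves \eqref{eq:upper bound}.

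For the lower bound \eqref{eq:lower bound} I will build a Cantor subset and apply the mass distribution principle. Choose a rapidly increasing sequence $Q_k$ and set $\eta_k=Q_k^{-\tau}$ with $\tau$ slightly above $\tau(\psi)$, arranged so that $\psi(q)\ge q^{-\tau}$ for $q\in[Q_k,2Q_k)$. Level $k$ consists of balls of radius $Q_k^{-1-\tau}$ inside level-$(k-1)$ balls. Level $k-1$ ends at scale $Q_{k-1}^{-1-\tau}$, and we need this to be $\ge Q_k^{-\beta}$. Inside each parent ball $B$ I use \eqref{eq:local counting} to find about $\mu(B)Q_k^{1-\tau d}Q_k^{(1+\tau)\delta}$ disjoint children. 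The mass $\nu$ is split uniformly among the children. The constraint $|B|\ge Q_k^{-\beta}$ forces a gap between consecutive scales: passing from the parent scale down to $Q_k^{-\beta}$ costs dimension. Estimating $\nu(B(x,r))$ for $r$ between the parent radius and the child radius, in three ranges (above $Q_k^{-\beta}$, between $Q_k^{-\beta}$ and $Q_k^{-1}$, and below), gives a Hölder exponent. The intermediate range is the expensive one, and there I expect the exponent to be $\delta-\frac{(\tau d-1)(\delta+\frac{1+d}{1+\tau}-d)}{\beta\delta}$. The other regime yields the natural value $\delta+\frac{1+d}{1+\tau}-d$, and the minimum of the two is \eqref{eq:lower bound}. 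For scales below $Q_k^{-1}$, a ball meets at most one rational point $(\bp+\bta)/q$ with $q\sim Q_k$, so $\nu(B(x,r))$ is controlled by Ahlfors regularity there.

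For the Hausdorff measure statement \eqref{eq:lower measure} with $\bta=\bzero$, I will run the same construction but use a mass transference or ubiquity-type argument, in the spirit of Beresnevich–Velani, to upgrade the dimension bound to $\hm^s=\infty$. Homogeneity lets me restrict to reduced fractions, for which the neighbourhoods with $q\sim Q$ are $Q^{-2}$-separated in appropriate directions. The condition $\delta>d-1$ together with $\tau<\frac{\delta+2-d}{2d-\delta-1}$ ensures that the overlaps are controlled. This replaces $\tau d-1$ by $(\tau-\frac1d)(2d-\delta-1)$ in the cost of the intermediate range. I expect the main obstacle to be this intermediate-scale mass estimate: bounding $\nu(B(x,r))$ for $Q_k^{-1}\lesssim r\lesssim Q_k^{-\beta}$, where the local counting property is not available and one has to count rational points in a ball using only Ahlfors regularity together with a separation argument.
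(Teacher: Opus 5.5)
\textbf{Upper bound.} This is the paper's covering argument, Lemma~\ref{l:global counting}. Two corrections are needed. First, the Vitali step needs the fact that $Q^{-1}\eta$-balls about points of $K\cap A_Q(\eta,\bta)$ lie in $A_Q(4\eta,\bta)$, whose measure is still $\asymp Q\eta^d$. Second, the direction of $\tau'$ is reversed. For $\tau'>\tau(\psi)$ you only know $\psi(q)>q^{-\tau'}$ infinitely often, so the inclusion into $\limsup_k A_{2^k}(2^{-k\tau'},\bta)$ fails. Take $\tau'\in(1/d,\tau(\psi))$ instead and let $\tau'\uparrow\tau(\psi)$.

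\textbf{Lower bound \eqref{eq:lower bound}.} The genuine gaps are in the lower bounds, whose exponents you only ``expect''. The one justification you give is false: a ball of radius $r<Q_k^{-1}$ can meet many points $(\bp+\bta)/q$ with $q\sim Q_k$.
\begin{itemize}
\item For $d=1$, $\theta=0$, Farey fractions with $q\sim Q$ are only $\asymp Q^{-2}$ apart.
\item For $d\ge2$, the simplex lemma only puts them on a hyperplane in cubes of side $\asymp Q^{-1-1/d}$.
\item For general $\bta$, no separation holds at all.
\end{itemize}
For \eqref{eq:lower bound} no separation is needed. Take $\nu=\mu|_F/\mu(F)$ with $F=B\cap A_Q(cQ^{-\tau},\bta)$, so $\mu(F)\asymp\mu(B)Q^{1-\tau d}$. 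You then have two bounds:
\begin{itemize}
\item the crude bound $\nu(B(\bx,r))\ll r^\delta Q^{\tau d-1}/\mu(B)$, valid at all scales;
\item for $r<Q^{-\beta}$, the monotone bound $\nu(B(\bx,r))\le\nu(B(\bx,Q^{-\beta}))\ll Q^{-\beta\delta}/\mu(B)$.
\end{itemize}
Switch from the second to the first at $r=Q^{-\beta\delta/s_0}$, where $s_0=\delta+\frac{1+d}{1+\tau}-d$ (not at $Q^{-1}$). This gives exactly the first term in \eqref{eq:lower bound}. With this inserted, your multi-level Cantor scheme does work for dimension. The same bounds pick up a factor $\nu(P)/\mu(P)$, which is $Q_k^{o(1)}$ if $Q_k$ grows fast. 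That gives an elementary alternative to the paper's single-scale content bound plus Theorem~\ref{t:weaken}(1).

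\textbf{Hausdorff measure \eqref{eq:lower measure}.} Two ingredients are missing.

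First, the intermediate-scale bound you yourself identify as the main obstacle. The paper's argument runs as follows.
\begin{itemize}
\item Tile $B(\bx,r)$ by cubes $D$ of side $\asymp Q^{-1-1/d}$.
\item By the simplex lemma (Lemma~\ref{l:simplex lemma}), all $\bp/q\in D$ with $q\le 2Q$ lie on one hyperplane $L$, so $D\cap A_Q(cQ^{-\tau},\bzero)\subset L^{(cQ^{-1-\tau})}$.
\item Since $\delta>d-1$, $\mu$ is absolutely $(C,\delta+1-d)$-decaying by \eqref{eq:absolutely decaying}. Hence each $D$ contributes $\ll(Q^{-1-\tau}/Q^{-1-1/d})^{\delta+1-d}Q^{-(1+1/d)\delta}$.
\item Summing over the $\ll r^\delta Q^{(1+1/d)\delta}$ relevant cubes gives $\nu(B(\bx,r))\ll r^\delta Q^{(\tau-1/d)(2d-1-\delta)}/\mu(B)$.
\item For $cQ^{-1-\tau}\le r<Q^{-1-1/d}$, the same argument applied to $B(\bx,r)$ itself gives the natural exponent exactly when $\tau<\frac{\delta+2-d}{2d-\delta-1}$.
\end{itemize}
This is where both extra hypotheses enter. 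Your ``$Q^{-2}$-separation in appropriate directions'' captures the $d=1$ picture, but for $d\ge2$ the needed input is a hyperplane statement combined with decay, not separation.

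Second, you need an $\varepsilon$-free route to $\hm^s=\infty$. Your Cantor construction loses $\varepsilon$ through the factors $\nu(P)/\mu(P)$. The classical Beresnevich--Velani MTP does not apply directly, because the rational points need not lie in $K$. The paper uses Theorem~\ref{t:weaken}(2) instead. The single-scale bound $\hc^s(B\cap A_Q(cQ^{-\tau},\bzero))\gg\mu(B)$, uniform over balls $B\subset K$ and valid for infinitely many $Q$, puts $K\cap W_d(\psi,\bzero)$ in $\scgm^s(K)$. Hence $\hm^s(K\cap W_d(\psi,\bzero))=\hm^s(K)=\infty$. Your proposal names neither this mechanism nor a substitute for it.
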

\begin{remark}\label{r:reason}
	By Conjecture BD, the upper bound \eqref{eq:upper bound} is sharp.
	The two lower bounds described in \eqref{eq:lower bound} and \eqref{eq:s}, on the other hand, are relatively weaker.
In fact, when $\delta>d-1$, the lower bound described in \eqref{eq:s} is strictly larger than that in \eqref{eq:lower bound} since
\[(\tau-1/d)(2d-\delta-1)<(\tau-1/d)\cdot d=\tau d-1,\]
and in certain cases, this  bound, namely \eqref{eq:s}, is even sharp. Specifically, as we will see in Remark \ref{r:fail}, the minimum in \eqref{eq:s} is
$\delta + \frac{1+d}{1+\tau} - d$ provided that
	\[
\delta \ge 2d-1-\frac{d^2\beta}{1+d}\qaq \beta > 1-d^{-2}.
\]
It is easy to observe that the most restrictive condition here is on $\beta$. When $d=1$, we only require $\beta > 0$, which is relatively easy to satisfy. However, as $d$ increases, $\beta$ must also increase and approach $1$.
It is unclear, even for the Lebesgue measure, whether the corresponding $\beta$ can be taken sufficiently close to $1$ when $d>1$. A crucial reason why the lower bound is more effective in one dimension is the simple fact that the distance between any two distinct rationals $p/q$ and $p'/q'$ is at least $1/(qq')$.
Although there are analogous results (see Lemma \ref{l:simplex lemma}) in higher dimensions, they are not strong enough, so it is necessary to require $\beta > 1 - d^{-2}$.
\end{remark}
\begin{remark}
	It is easy to see that, in Theorem \ref{t:main}, the larger the value of $\beta$, the better the lower bounds.
	However, in general, $\beta$ tends to be quite small.
	If one hopes to apply this result to Conjecture BD, it would require $\beta>2(1-\frac{\log 2}{\log 3})\approx0.7381$.
	According to the currently known results, achieving such a large value of $\beta$ seems to be very difficult, see also Remark \ref{r:self-similar}.
\end{remark}

\begin{remark}\label{r:beta arise}
	The appearance of $\beta$ in the two lower bounds \eqref{eq:lower bound} and \eqref{eq:s} stems from the following reason.
	When analyzing the fractal structure of the set $K\cap W_d(\psi,\bta)$, the $(\alpha,\beta,\bta)$-local counting property allows us to obtain good estimates for balls with radius larger than $Q^{-\beta}$.
	However, for balls with smaller radius, such estimates are no longer available,
	and one must resort to worst-case estimates.
	This limitation is what can cause the lower bound to be smaller than the value one might ideally expect.
\end{remark}


Next, we present several consequences, which, as we shall see later, extend the works of Chen \cite{Chenfractal} and Yu \cite{Yufractals}. Note that when $d=1$, the additional restriction on $\tau$ (imposed so as to obtain the better lower bound \eqref{eq:s}) is
\[
1<\tau<\min\left\{\alpha,\ \frac{1+\delta}{1-\delta}\right\}=\alpha,
\]
since  $\alpha<\frac{1+\delta-\beta\delta}{1-\delta}$.

\begin{corollary}\label{c:corollary}
	Let $K\subset\R^d$ be a compact subset equipped with a $\delta$-Ahlfors regular probability measure $\mu$. Suppose that $\mu$ satisfies the $(\alpha,\beta,\boldsymbol{\theta})$-local counting property for some $1/d<\alpha<\frac{1+\delta-\beta\delta}{d-\delta}$, $0<\beta<1$ and $\bta\in\R^d$. Then, the following statements hold.
	\begin{enumerate}
		\item $\hdim\big(K\cap\VWA_d(\bta)\big)=\hdim K=\delta.$
		\item If $d=1$, $\theta=0$, $\beta>0$ and $\delta>1-\beta/2$,
		then for any $1<\tau<\alpha$,
		\[\hm^{\delta+\frac{2}{1+\tau}-1}\big(K\cap E_1(\tau,0)\big)=\infty.\]
	\end{enumerate}
\end{corollary}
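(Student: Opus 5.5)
Both parts follow from Theorem \ref{t:main} applied to $\psi=\psi_\tau$, for which $\tau(\psi)=\tau$.

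\emph{Part (1).} The inclusion $K\cap\VWA_d(\bta)\subset K$ gives $\hdim(K\cap\VWA_d(\bta))\le\hdim K$. Since $\mu$ is $\delta$-Ahlfors regular and supported on $K$, the mass distribution principle together with a standard covering argument gives $\hdim K=\delta$. For the reverse inequality, note that $\VWA_d(\bta)\supset W_d(\psi_\tau,\bta)$ for every $\tau>1/d$. Take $\tau\in(1/d,\alpha)$ and apply the lower bound \eqref{eq:lower bound}:
\[\hdim\big(K\cap\VWA_d(\bta)\big)\ge\min\Big\{\delta-\tfrac{(\tau d-1)(\delta+\frac{1+d}{1+\tau}-d)}{\beta\delta},\ \delta+\tfrac{1+d}{1+\tau}-d\Big\}.\]
As $\tau\downarrow 1/d$, we have $\tau d-1\to0$ and $\frac{1+d}{1+\tau}\to d$. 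Both terms in the minimum therefore tend to $\delta$. Letting $\tau\downarrow1/d$ gives the full dimension $\delta$.

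\emph{Part (2).} Fix $1<\tau<\alpha$ and set $s_0=\delta+\frac{2}{1+\tau}-1$. First we check that the hypotheses of the measure statement \eqref{eq:lower measure} hold with $d=1$, $\bta=0$ and $\psi=\psi_\tau$ (so $c=1$). We have $\delta>1-\beta/2>0=d-1$, and, as noted before the corollary, $\min\{\alpha,\frac{1+\delta}{1-\delta}\}=\alpha$. Next we verify that the minimum in \eqref{eq:s} equals $s_0$. With $d=1$ the condition is
\[\frac{(\tau-1)(1-\delta)s_0}{\beta\delta}\le\frac{2}{1+\tau}-1=\frac{1-\tau}{1+\tau}.\]
Both sides have the wrong sign for a direct comparison: the left side is nonnegative while the right side is negative. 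So the claim should instead be read through the sharper computation the authors defer to Remark \ref{r:fail}, which for $d=1$ reduces to $\delta\ge 1-\beta/2$. I would therefore redo that comparison directly. Put $u=\tau-1>0$. The first term in \eqref{eq:s} is $\delta-\frac{u(1-\delta)s_0}{\beta\delta}$ and the second is $\delta-\frac{u}{2+u}$, so it suffices to show
\[(1-\delta)s_0(2+u)\le\beta\delta.\]
Since $s_0\le\delta$ and $2+u<\ldots$ is not bounded uniformly in $u$, this is where I expect the real work to lie. I would use the bound $u<\alpha-1<\frac{1+\delta-\beta\delta}{1-\delta}-1$ to control $u$, and then check that $\delta>1-\beta/2$ is exactly what makes the inequality hold. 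This verification is the main obstacle: it may need Remark \ref{r:fail}'s precise form rather than my rough reduction.

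Once the minimum is identified, \eqref{eq:lower measure} gives $\hm^{s_0}(K\cap W_1(\psi_\tau,0))=\infty$. To pass to $E_1(\tau,0)$, I would follow the standard argument for the corollary of Jarn\'ik's theorem. We have
\[K\cap W_1(\psi_\tau,0)\subset \big(K\cap E_1(\tau,0)\big)\cup \bigcup_n \big(K\cap W_1(\psi_{\tau+1/n},0)\big).\]
The upper bound \eqref{eq:upper bound} gives $\hdim(K\cap W_1(\psi_{\tau+1/n},0))\le\delta+\frac{2}{1+\tau+1/n}-1<s_0$ whenever $\tau+1/n<\alpha$. For the remaining large $n$, where $\tau+1/n\ge\alpha$, monotonicity gives $W_1(\psi_{\tau+1/n},0)\subset W_1(\psi_{\tau'},0)$ for any fixed $\tau'\in(\tau,\alpha)$. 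Hence every set in the union has $\hm^{s_0}$-measure zero. Countable subadditivity then transfers the infinite measure to $K\cap E_1(\tau,0)$. Points with $\tau(x)>\tau$ lie in some $W_1(\psi_{\tau+1/n},0)$, and points of $W_1(\psi_\tau,0)$ have $\tau(x)\ge\tau$, which justifies the inclusion above.
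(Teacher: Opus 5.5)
Part (1) is correct and is the paper's argument. In part (2) your overall structure is also the paper's: apply \eqref{eq:lower measure} with $c=1$, identify the minimum in \eqref{eq:s}, then remove $\bigcup_n K\cap W_1(\psi_{\tau+1/n},0)$ using \eqref{eq:upper bound} and countable subadditivity. There is one genuine gap. The only step that uses $\delta>1-\beta/2$ is showing that the minimum in \eqref{eq:s} equals $s_0$, and you leave it open. The plan you sketch for it would not work.

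Your first reduction has a sign error. The required inequality is
\[\frac{(\tau-1)(1-\delta)s_0}{\beta\delta}\le 1-\frac{2}{1+\tau}=\frac{\tau-1}{1+\tau},\]
so there is no sign obstruction. Your second reduction, $(1-\delta)s_0(1+\tau)\le\beta\delta$, is correct. You then propose to use $s_0\le\delta$ and bound $\tau$ from above via $\alpha$, but that discards the cancellation that makes the inequality true. With $s_0\le\delta$ you would need $(1-\delta)(1+\tau)\le\beta$ for all $\tau<\alpha$. Since $\alpha$ may be nearly $\frac{1+\delta-\beta\delta}{1-\delta}$, this forces $2-\beta\delta\le\beta$, i.e.\ $\beta(1+\delta)\ge 2$, which is impossible for $\beta<1$. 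For instance, $\delta=0.9$, $\beta=0.5$, $\tau=10$ breaks the crude bound, even though the true inequality holds there.

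The missing observation is that $s_0$ itself decreases in $\tau$. Since $(1+\tau)s_0=(1+\tau)\delta-(\tau-1)$,
\[(1-\delta)(1+\tau)s_0=(1-\delta^2)-\tau(1-\delta)^2 .\]
This is decreasing in $\tau$ and equals $2\delta(1-\delta)$ at $\tau=1$. Hence for every $\tau>1$,
\[(1-\delta)(1+\tau)s_0\le 2\delta(1-\delta)\le\beta\delta,\]
because $\beta\ge 2(1-\delta)$. No upper bound on $\tau$ is needed. This is exactly the paper's condition $\tau\ge\frac{1-\delta^2-\beta\delta}{(1-\delta)^2}$, whose right-hand side is $\le 1$ once $\beta\ge 2-2\delta$. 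With this inserted, your proof is complete.

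A minor slip: the indices with $\tau+1/n\ge\alpha$ are the small $n$, not the large ones. Your monotonicity fix for them (embedding into $W_1(\psi_{\tau'},0)$ with $\tau<\tau'<\alpha$) is correct, and it handles a point the paper passes over.
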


Based on this corollary, we provide several concrete examples.

\begin{theorem}\label{t:example1}
	Let $K\subset \R^d$ be a strongly irreducible self-similar set satisfying the open set condition. Suppose that $\bta=\bzero$. Then, the following statements hold.
	\begin{enumerate}
		\item For any $d\ge 1$, we have $\hdim (K\cap \VWA_d(\bzero))=\hdim K$.
		\item Let $d=1$. There exist computable constants $0<\delta_0<1$ and $\alpha>1$ such that for any $K$ with $\hdim K\ge\delta_0$, we have
		\[\hm^{\delta+\frac{2}{1+\tau}-1}\big(K\cap E_1(\tau,0)\big)=\infty,\quad\text{for any $\tau\in(1,\alpha)$.}\]
	\end{enumerate}
\end{theorem}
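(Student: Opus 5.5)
The natural route is to reduce Theorem~\ref{t:example1} to Corollary~\ref{c:corollary}. The work is to check that the natural self-similar measure on $K$ has the required regularity and counting properties, with explicit parameters. Let $\mu$ be the normalized restriction of $\hm^{\delta}$ to $K$, where $\delta=\hdim K$. Under the open set condition, $\mu$ equals the natural self-similar measure with weights $r_i^{\delta}$, where $r_i$ are the contraction ratios, and it is $\delta$-Ahlfors regular. This is the standard result of Hutchinson and Schief. Strong irreducibility is inherited by $\mu$, so the results of B\'enard--He--Zhang \cite{BenHeZhangfractalJAMS,BenHeZhangfractalRd} apply to $\mu$.

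The main step is to derive the $(\alpha,\beta,\bzero)$-local counting property from their equidistribution input. Behind Theorem BHZ there is a quantitative effective equidistribution statement for expanding translates of $\mu$ on the space of lattices. I expect it to give the following: there exist $\kappa>0$ and $\ve_0>0$ such that, for $Q^{-1/d-\ve_0}\le\eta\le Q^{-1/d}$,
\[
\mu\big(A_Q(\eta,\bzero)\big)=c\,Q\eta^d\big(1+O(Q^{-\kappa})\big).
\]
To localize this to a ball $B$ centred in $K$ with $|B|\ge Q^{-\beta}$, I would use self-similarity. Choose a cylinder $f_\omega(K)\subset B$ of diameter comparable to $|B|$; by Ahlfors regularity a bounded number of such cylinders covers $B\cap K$ up to a comparable measure. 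Pulling back by the similarity $f_\omega$, with ratio $r_\omega\asymp|B|$, converts $f_\omega(K)\cap A_Q(\eta,\bzero)$ into $K\cap A'$. Here $A'$ is a neighbourhood of shifted rationals at the rescaled size $Q'\asymp Q$ and $\eta'\asymp\eta/r_\omega$. The shift is the translation part of $f_\omega$. To handle this I would use the version of the BHZ estimate uniform over translated and rotated copies, that is, over the affine images $f_\omega^{-1}$ of the rational grid. This uniformity is built into their effective argument through the $\SO(d)$-part and the translation.

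The error term $Q^{-\kappa}$ must dominate the loss $r_\omega^{-\delta}\le Q^{\beta\delta}$ coming from dividing by $\mu(B)$. So any $\beta<\kappa/\delta$ works, and $\alpha=1/d+\ve$ for some small $\ve>0$, with $\alpha$ below the threshold of Remark~\ref{r:alphabeta}. This step is the main obstacle. One must check that BHZ is genuinely effective, uniform in the translation and rotation parameters coming from the cylinder maps, and valid for $\eta$ slightly below $Q^{-1/d}$, not just for the divergence-type sums. If their theorem is only stated for $\eta\asymp Q^{-1/d}$, I would first try to obtain the range $Q^{-\alpha}\le\eta$ by a partition-of-unity smoothing plus their Fourier/spectral gap bound. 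The exponents $\kappa$ and $\ve_0$ depend only on the IFS, so they are computable.

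With the local counting property in hand, part~(1) is immediate from Corollary~\ref{c:corollary}(1), for every $d\ge1$. For part~(2), with $d=1$, the corollary requires $\delta>1-\beta/2$. The important point is that $\beta$ (and $\alpha$) from the previous step can be chosen uniformly over the class of sets considered, or at least bounded below by an explicit $\beta_0>0$. Setting $\delta_0:=1-\beta_0/2$, any $K$ with $\hdim K\ge\delta_0$ satisfies the hypothesis, and Corollary~\ref{c:corollary}(2) gives the stated Hausdorff measure for all $\tau\in(1,\alpha)$.

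If uniformity over all $K$ fails, I would read the theorem as providing $\delta_0$ and $\alpha$ computable from the IFS data. The same argument then applies verbatim.
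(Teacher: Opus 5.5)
The overall architecture is the paper's: get an $(\alpha,\beta,\bzero)$-local counting property from B\'enard--He--Zhang, localized to cylinders via the open set condition, then apply Corollary~\ref{c:corollary}. There are two problems. The first is repairable for part~(1); the second is a genuine gap in part~(2).

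\textbf{Localization.} Pulling $A_Q(\eta,\bzero)$ back by $\phi_\omega(\by)=\rho_\omega\by+\bb_\omega$ gives $\{\by:\|q\rho_\omega\by+q\bb_\omega\|<\eta\text{ for some }Q\le q<2Q\}$. This is not a neighbourhood of shifted rationals $A_{Q'}(\eta',\bta)$, because the shift depends on $q$ and $q\rho_\omega$ is not an integer. So there is no ``BHZ estimate for shifted grids'' to quote. The uniformity you want lives on $X$. Since $u(\rho_\omega\bx+\bb_\omega)=g(s)u(\bx)g(-s)u(\bb_\omega)$ with $s=\frac{d}{d+1}\ln\rho_\omega$, the integral against $\mu_\omega$ is the BHZ integral for $\mu$ at time $t+s$ and base point $g(-s)u(\bb_\omega)y$. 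That base point has injectivity radius $\gg\rho_\omega\inj(y)$ (Corollary~\ref{c:local effective equidistribution}). The loss is therefore $\rho_\omega^{-1-\frac{d\kappa}{d+1}}$, not $r_\omega^{-\delta}$: after pulling back, the estimate already concerns the normalized $\mu_\omega$, so nothing is divided by $\mu(B)$. Combine this with the Khalil--Luethi error $(Q^{-1}\eta)^{\frac{d\kappa}{d+1}}$, which holds for all $0<\eta\le Q^{-1/d}$ and so settles your worry about the $\eta$-range. The result is the admissible range $\beta<1+\frac{d\alpha(d+1)\kappa}{d+1+d\kappa}-d\alpha$ of Corollary~\ref{c:self-similar set}. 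Your $\beta<\kappa/\delta$ has no derivation behind it. For part~(1) any $\beta>0$ suffices, so this is only a bookkeeping error there.

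\textbf{Part (2): the genuine gap.} The content of the statement is that $\delta_0$ and $\alpha$ do not depend on $K$. You therefore need one pair $\alpha>1$, $\beta>0$ that works for every $K$ with $\hdim K\ge\delta_0$. You assert this uniformity, but nothing in your argument supplies it, since the exponent $\kappa$ a priori depends on the IFS. Your fallback, with constants depending on the IFS, is a different and weaker statement. It is essentially Corollary~\ref{c:corollary}(2) for a single $K$, and could even be vacuous if $\delta(K)\le 1-\beta(K)/2$.

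The missing input is that $\kappa$ can be chosen nondecreasing in $\hdim K$ (the ``Moreover'' clause of Theorem~\ref{t:effective equidistribution}). With it the argument runs as follows.
\begin{enumerate}
\item Fix $\kappa_0>0$ with $\kappa(K')>\kappa_0$ for some $K'$ with $\hdim K'<1$.
\item Take $\alpha>1$ close enough to $1$ that $\beta:=1+\frac{2\alpha\kappa_0}{2+\kappa_0}-\alpha>0$.
\item Choose $\delta_0\in[\hdim K',1)$ with $\delta_0>1-\beta/2$.
\end{enumerate}
Every $K$ with $\hdim K\ge\delta_0$ then has $\kappa(K)>\kappa_0$ by monotonicity. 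The admissible $\beta$-range is increasing in $\kappa$, so each such $K$ has the $(\alpha,\beta,0)$-local counting property with these fixed $\alpha,\beta$. Corollary~\ref{c:corollary}(2) then applies for all $\tau\in(1,\alpha)$.
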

\begin{remark}
	Theorem \ref{t:example1} (1) extends Theorem C to higher dimensions.
	Moreover, for sufficiently thick self-similar sets, we are able to obtain the Hausdorff measure of the set $K \cap E_1(\tau,0)$,
	which generalizes Theorem Y on missing digits sets to thick self-similar sets.
	However, Theorem \ref{t:example1} (2) cannot be straightforwardly extended to higher dimensions, as explained in Remarks \ref{r:reason} and \ref{r:fail}.
\end{remark}
\begin{remark}\label{r:self-similar}
	The constants $\delta_0$ and $\alpha$ in Theorem \ref{t:example1} (2) are, in principle, computable.
	They depend on the exponent $\kappa$ appearing in the error term of the effective equidistribution result established by B\'enard, He and Zhang \cite{BenHeZhangfractalJAMS,BenHeZhangfractalRd} (see Theorem \ref{t:effective equidistribution}).
Roughly speaking, the admissible range of the parameter $\beta$ depends on the exponent $\kappa$, and increases as $\kappa$ becomes larger. More precisely, by Corollary~\ref{c:self-similar set}, $\beta$ can be taken arbitrarily close to $\frac{2\kappa}{2+\kappa}$ from below.
	However, the authors of \cite{BenHeZhangfractalJAMS,BenHeZhangfractalRd} do not provide an explicit expression for $\kappa$, and in fact the available value of $\kappa$ is very small.
	In order to apply the result to the middle-third Cantor set, it follows from Corollary \ref{c:corollary} (2) that one would require $\delta=\frac{\log 2}{\log 3}>1-\beta/2>1-\frac{\kappa}{2+\kappa}$, or equivalently $\kappa>2(1-\frac{\log 2}{\log 3})/\frac{\log 2}{\log 3}\approx 1.1699$, which seems difficult to achieve using their method.
	For this reason, we do not attempt to compute the explicit values of $\delta_0$ and $\alpha$ here.
\end{remark}

\begin{theorem}\label{t:example2}
	Let $K\subset[0,1]$ be a missing digits set, and let $\mu$ be the restriction of $\hm^{\hdim K}$ to $K$. Then, the following statements hold.
				\begin{enumerate}
					\item  If $\fldim\mu>1/2$, then for any $\theta\in\R$,
					\[\hdim \big(K\cap \VWA_1(\theta)\big)=\hdim K.\]
					\item If $\hdim K\fldim\mu>1/2$,
					then there exists $\alpha>1$ such that
					\[\hm^{\delta+\frac{2}{1+\tau}-1}\big(K\cap E_1(\tau,0)\big)=\infty,\quad\text{for any $\tau\in(1,\alpha)$.}\]
				\end{enumerate}
\end{theorem}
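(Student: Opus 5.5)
The plan is to deduce both parts of Theorem~\ref{t:example2} from Corollary~\ref{c:corollary}. The only real work is to show that $\mu$, the normalized restriction of $\hm^{\delta}$ to $K$ with $\delta=\hdim K$, satisfies the $(\alpha,\beta,\theta)$-local counting property. In part (1) this is needed for every $\theta$; in part (2) it is needed for $\theta=0$, with $\beta$ large enough that $\delta>1-\beta/2$. Ahlfors regularity of $\mu$ is standard for missing digits sets: they are self-similar sets with equal contraction ratios satisfying the open set condition.

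\textbf{Setting up the Fourier count.} To verify \eqref{eq:local counting}, I would follow Yu's Fourier-analytic approach.
\begin{enumerate}
\item Fix a ball $B$ with centre in $K$ and $|B|\ge Q^{-\beta}$. Replace $B$ by a comparable self-similar piece $f_I(K)$ of generation $n$ with $b^{-n}\asymp |B|$. Then $\mu|_{f_I(K)}$ is an affine image of $\mu$ scaled by $\mu(B)$.
\item Replace $\cha_{A_Q(\eta,\theta)}$ by smooth minorants and majorants. These are sums over $Q\le q<2Q$ of bumps of width $\eta/q$ around the points $(p+\theta)/q$.
\item Expand each in Fourier series. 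The frequency $0$ gives the main term $\mu(B)\,Q\eta$. The error terms involve $\sum_{Q\le q<2Q}\sum_{0<|k|\ll q/\eta} |\widehat{\mu}(b^{-n}k\cdot\text{stuff})|$, weighted by $\eta/q$, with a phase $e(k\theta/q)$.
\end{enumerate}
Summing over $q$ turns the frequencies into multiples $kq$. The phase is harmless because I take absolute values, and this is exactly why the inhomogeneous case costs nothing extra.

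\textbf{Bounding the error.} Using the definition of $\fldim\mu$, I would bound $\sum_{|\xi|\le T}|\widehat\mu(\xi)|\ll T^{1-\fldim\mu+\epsilon}$. Applying this to the rescaled measure on the piece, the error relative to $\mu(B)Q\eta$ is $\ll (Q/\eta)^{1-\fldim\mu+\epsilon}\,|B|^{-(1-\fldim\mu)}\cdot Q^{-1}$, up to harmless factors. Requiring this to be $o(1)$ uniformly for $\eta\ge Q^{-\alpha}$ and $|B|\ge Q^{-\beta}$ gives a condition of the form
\[(1+\alpha+\beta)(1-\fldim\mu)<1.\]
This already holds with $\alpha>1$ and some $\beta>0$ when $\fldim\mu>1/2$. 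Corollary~\ref{c:corollary}~(1) then gives part (1).

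\textbf{Part (2) and the main obstacle.} For part (2) I need $\beta>2(1-\delta)$, which is possible only if
\[(2+2(1-\delta))(1-\fldim\mu)<1\]
for $\alpha$ close to $1$. I expect this is where the hypothesis $\delta\fldim\mu>1/2$ must enter. The crude estimate above may not give it directly, so a sharper treatment of the error terms is needed. I would first split the $q$-sum and use the large-sieve-type bound from Yu's argument (\cite[\S 3.6]{Yufractals}), which exploits averaging over $q$ rather than the trivial bound. I would then check that the resulting admissible $\beta$ satisfies $\delta>1-\beta/2$ exactly when $\delta\fldim\mu>1/2$. This optimization is the main obstacle.

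Given the property, Corollary~\ref{c:corollary}~(2) yields the Hausdorff measure statement for all $\tau\in(1,\alpha)$, after shrinking $\alpha$ so that $\alpha<\frac{1+\delta-\beta\delta}{1-\delta}$.
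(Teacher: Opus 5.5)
Your reduction to Corollary~\ref{c:corollary} and your passage from balls to branches $K_\omega$ match the paper. There is, however, a genuine gap in Step 2. A sum $\sum_{Q\le q<2Q}\phi_q$ of bumps around the points $(p+\theta)/q$ majorizes $\cha_{A_Q(\eta,\theta)}$. It is not a minorant, because it exceeds $1$ wherever the sets $A(q,\eta,\theta)$ overlap. Your Fourier expansion therefore only yields $\sum_{q=Q}^{2Q}\mu(B\cap A(q,\eta,\theta))\asymp\mu(B)\,Q\eta$, which is \eqref{eq:local estimate inhomogeneous2}. That gives the upper half of \eqref{eq:local counting} but no lower bound for the measure of the \emph{union}. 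The lower bound is indispensable: it gives $\mu(F)\asymp\mu(B)Q^{1-\tau}$, the normalisation of $\nu$ in the proof of Theorem~\ref{t:main}.

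Your remark that ``the inhomogeneous case costs nothing extra'' is exactly where the argument fails. Controlling overlaps for $\theta\neq 0$ is the substance of Lemma~\ref{l:local inhomo}. The missing idea is to study the multiplicity $f_Q=\sum_{q=Q}^{2Q}\cha_{A(q,\eta,\theta)}$.
\begin{enumerate}
\item If $f_Q(x)=m\ge 2$ with $q_1<\dots<q_m$, then $\|(q_m-q_i)x\|\le 2\eta$, which is a homogeneous condition.
\item Since $\eta<Q^{-1}/4$ and distinct fractions $p/q\ne p'/q'$ are at least $1/(qq')$ apart, all these approximations come from one reduced fraction $a/n$ with $n\le Q/(m-1)$ and $\|nx\|<2\eta/(m-1)$.
\item Apply the Fourier bound to this homogeneous set and sum dyadically in $m$. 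This gives $\int_{\{f_Q\ge k\}}f_Q\dif\mu_\omega\ll Q\eta/k+Q^{\rho}\eta\log Q$ with $\rho<1$.
\item Hence $\mu_\omega(A_Q(\eta,\theta))\ge k^{-1}\int_{\{f_Q<k\}}f_Q\dif\mu_\omega\gg Q\eta$ for a fixed large $k$.
\end{enumerate}
For $\theta=0$ you could instead quote Yu's overlap analysis, but your proposal does neither.

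Your error term also has the wrong dependence on $|B|$. For a branch of generation $n$ one has $|\widehat{\mu_\omega}(\xi)|=|\widehat\mu(b^{-n}\xi)|$. For $1/2<\gamma<\fldim\mu$, the relevant sum satisfies $\sum_{0<|\xi|\le Q/\eta}|\widehat\mu(b^{-n}\xi)|\,d(\xi)\ll b^{\gamma n}(Q/\eta)^{1-\gamma}$. So the loss is $|B|^{-\gamma}$, not $|B|^{-(1-\fldim\mu)}$. Your version cannot be right: since $|\widehat\mu(y)|\ge 1/2$ for small $|y|$, the sum is trivially $\gg b^n$, and your expression falls below this when $\fldim\mu$ is close to $1$.

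The correct admissibility condition is $\gamma\beta+(1+\alpha)(1-\gamma)<1$. With it, part (2) needs no large-sieve refinement. Letting $\alpha\downarrow 1$, some $\beta>2(1-\delta)$ is admissible if and only if $2(1-\delta)<(2\gamma-1)/\gamma$, that is, $\gamma\delta>1/2$. You can arrange this by taking $\gamma<\fldim\mu$ close to $\fldim\mu$, as the paper does. So the ``main obstacle'' you identify comes from the miscomputed scaling, while the real obstacle, the lower bound for the union, is not addressed.
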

\begin{remark}
Theorem \ref{t:example2} (1) provides an affirmative answer to a question posed by Yu \cite[Remark 9.4]{Yufractals} concerning inhomogeneous Diophantine approximation on missing digits sets.
Theorem \ref{t:example2} (2) goes further by strengthening the conclusion of Theorem~Y from Hausdorff dimension to Hausdorff measure.
As mentioned earlier, it follows from the work of Chow, Varj\'u, and Yu \cite[Propositions 2.4 and 2.5]{ChowVarjuYufractals} that Theorem \ref{t:example2} (1) and (2) apply respectively to 5-ary and 7-ary missing digits sets, each with exactly one missing digit.
\end{remark}

As noted in Remarks \ref{r:reason} and \ref{r:beta arise}, the $(\alpha,\beta,\bzero)$-local counting property alone is insufficient to determine the Hausdorff dimension of $K\cap W_d(\psi_\tau,\bzero)$ for $d>1$. It is therefore natural to ask whether there exists a non-trivial fractal set $K$ for which the Hausdorff dimension of $K\cap W_d(\psi_\tau,\bzero)$ is exactly $\delta+\frac{1+d}{1+\tau}-d$. The following result shows that certain non-trivial missing digits sets provide a positive answer to this question.
\begin{theorem}\label{t:example3}
	Let $K\subset[0,1]^d$ be a missing digits set satisfying the following assumptions:
	\begin{enumerate}[({A}1)]
		\item $K = K_1 \times \cdots \times K_{d-1} \times [0,1]$, where $K_j$ ($1\le j\le d-1$) is a missing digits set;
		\item there exists  $\frac{d}{d+1} < \gamma < 1$ such that, for each $1 \le j \le d-1$, the restriction of $\hm^{\hdim K_j}$ to $K_j$, denoted by $\mu_j$, satisfies $\fldim \mu_j > \gamma$.
	\end{enumerate}
	Then, there exists $\alpha>1/d$ such that
	\[\hm^{\delta+\frac{1+d}{1+\tau}-d}\big(K\cap E_d(\tau,\bzero)\big)=\infty,\quad\text{for any $\tau\in(1/d,\alpha)$.}\]
\end{theorem}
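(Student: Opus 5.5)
The plan is to deduce the result from the machinery behind Theorem~\ref{t:main}, in three steps. First, verify a local counting property for the product measure $\mu=\mu_1\times\cdots\times\mu_{d-1}\times\lm_1$ with $\beta$ as close to $1$ as we like. Second, rerun the Cantor-type construction behind \eqref{eq:lower measure}, using the product structure to beat the simplex-lemma obstruction described in Remarks~\ref{r:reason} and \ref{r:fail}. Third, pass from $W_d$ to the exact-exponent set $E_d$.

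\textbf{Step 1 (local counting).} Here $\mu$ is $\delta$-Ahlfors regular with $\delta=\sum_{j<d}\hdim K_j+1$. Expand $\mu(B\cap A_Q(\eta,\bzero))$ against a smooth bump adapted to $B$ and to the $\eta/q$-neighbourhoods of the lattices $q^{-1}\Z^d$, following Yu and Chow--Yu. By Poisson summation this becomes
\[
\mu(B)\,Q\eta^d \;+\; \sum_{Q\le q<2Q}\ \sum_{\bxi\neq\bzero}(\text{weights})\,\widehat{\mu\cdot\phi_B}(q\bxi).
\]
Because the last factor is Lebesgue, any frequency with a nonzero last coordinate is killed by the decay of $\widehat{\phi_B}$, up to scale $|B|^{-1}$. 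The surviving error is therefore controlled by $\ell^1$-sums of $|\widehat{\mu_j}|$ over $\lesssim Q\eta^{-1}$ frequencies in each of $d-1$ coordinates. Hypothesis (A2) gives $\sum_{|\xi|\le N}|\widehat{\mu_j}(\xi)|\ll N^{1-\gamma+\epsilon}$. I expect the error to be $o(\mu(B)Q\eta^d)$ exactly when
\[
(d-1)(1-\gamma)(1+\alpha)<\ldots,
\]
and I expect $\gamma>d/(d+1)$ to be precisely what allows some $\alpha>1/d$ together with every $\beta<1$. This gives the $(\alpha,\beta,\bzero)$-local counting property for all $\beta<1$, with $\alpha=\alpha(\gamma,d)>1/d$.

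\textbf{Step 2 (main obstacle).} Step 1 alone is not enough. Plugging $\beta\to1$ into \eqref{eq:s} requires
\[
\delta\ge \frac{d^2+d-1}{d+1},
\]
whereas (A2) only guarantees $\delta>\frac{d^2+1}{d+1}$. So the lower-bound construction itself must be sharpened, and the lossy ingredient is the estimate for small balls of radius $<Q^{-\beta}$ (Remark~\ref{r:beta arise}). There the general argument bounds the number of resonant points $(\bp/q)$, $Q\le q<2Q$, inside a small ball by a simplex-lemma worst case.

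In the product setting I would refine this count using the last coordinate. In a ball of radius $r$, the points with a fixed $q$ are $1/q$-separated in every coordinate. Projecting to the first $d-1$ coordinates, the count is then governed by rationals near $K_1\times\cdots\times K_{d-1}$ at scale $r$. That count is again controlled by the Fourier $\ell^1$ bound of Step 1, now applied at scales below $Q^{-\beta}$ using only the $(d-1)$-dimensional factor. This should give an upper count of order $\mu(B)Q\eta^{d}$ times a factor $Q^{o(1)}$ all the way down to the scale $\asymp Q^{-1-1/d}$ relevant for the construction.

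With this improved count I would repeat the mass-distribution argument for \eqref{eq:lower measure}: nested levels $Q_k$ chosen along the subsequence where $\psi(q)\ge cq^{-\tau}$, and mass spread uniformly over $\mu$-balls around resonant points. The aim is to show that the measure of every ball $B(\bx,r)$ is at most a constant times $r^{s}$, where
\[
s=\delta+\frac{1+d}{1+\tau}-d,
\]
and that the implied constant can be made arbitrarily small, which yields $\hm^s=\infty$. This is the step I expect to be hardest, in particular checking the intermediate range $Q^{-1-\tau}<r<Q^{-\beta}$.

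\textbf{Step 3 (exact exponent).} Write
\[
K\cap E_d(\tau,\bzero)\supseteq \bigl(K\cap W_d(\psi_\tau,\bzero)\bigr)\setminus\bigcup_{n}\bigl(K\cap W_d(\psi_{\tau+1/n},\bzero)\bigr).
\]
By the covering argument behind \eqref{eq:upper bound}, each removed set has zero $\hm^s$-measure, since its dimension is strictly less than $s$. Hence $\hm^s\bigl(K\cap E_d(\tau,\bzero)\bigr)=\infty$ for every $\tau\in(1/d,\alpha)$. To apply this with $\psi=\psi_\tau$ exactly, one may first replace $\psi_\tau$ by $\psi_\tau/\log q$ in the construction, so that the points produced have exponent exactly $\tau$.
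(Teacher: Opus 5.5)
Your Step~2 carries the entire proof, and it contains an expectation rather than an argument. That expectation also rests on a false claim: that, thanks to the Lebesgue factor, the resonant set can be counted to within $Q^{o(1)}$ of $\mu(B')Q\eta^d$ uniformly for balls $B'$ down to radius $Q^{-1-1/d}$.

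Here is a counterexample. Take $d=2$, a digit $e\in\cd_1$ and $a=e/(b-1)\in K_1$, so $\{a\}\times[0,1]\subset K$. Let $\eta=Q^{-\tau}$ with $\frac12<\tau<1$, and let $B'$ have radius $r=Q^{-3/2}$ and centre $(a,y)$ with $y$ badly approximable. There are $\asymp Q^{1/2}$ points $(a,p_2/q)\in K\cap B'$ with $Q\le q<2Q$, where $q$ is a multiple of the denominator of $a$. Their second coordinates are $\gg Q^{-2}$-separated while $\eta/Q\gg Q^{-2}$. So their $\eta/q$-neighbourhoods cover a positive proportion of $(K_1\cap B(a,\frac{\eta}{2Q}))\times B(y,r)$. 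Hence $\mu(B'\cap A_Q(\eta,\bzero))\gg(\eta/Q)^{\delta_1}r$. This exceeds $\mu(B')Q\eta^2$ by the factor $Q^{(\tau-\frac12)(2-\delta_1)}$, which is exactly the Case~3B loss $(\tau-\frac1d)(2d-1-\delta)$.

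So the hyperplane concentration behind the simplex-lemma estimate survives the product structure: the rational hyperplanes $\{x_j=e/(b-1)\}$ meet $K$ in large sets. Any replacement would have to be a genuinely non-uniform, scale-by-scale bound for $\mu|_F/\mu(F)$ in the window $Q^{-1-\tau}\le r\le Q^{-\beta}$, and you do not supply one. The Fourier $\ell^1$ bound cannot reach that window. Passing to a branch of size $\rho$ costs $\rho^{-\gamma}$ in each fractal coordinate, so the $\ell^1$ bound controls even the single-frequency error terms only when $\gamma\beta+(1-\gamma)(1+\alpha)<1$, as in \eqref{eq:taubeta}.

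Step~1 has further problems. For the same reason it does not give ``every $\beta<1$'', only $\beta<1-\alpha(1-\gamma)/\gamma$. Your Poisson expansion also estimates $\sum_q\mu(B\cap A(q,\eta,\bzero))$, not the measure of the union. The lower bound $\mu(F)\gg\mu(B)Q^{1-\tau d}$, which sits in the denominator of $\nu$, needs overlap control, and the paper obtains that only for $d=1$ (Lemma~\ref{l:local inhomo}).

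The idea you are missing is that the paper never counts resonant points in small balls. It proceeds as follows.
\begin{enumerate}
\item It takes anisotropic boxes $\bmeta=(Q^{-\tau}/4,\dots,Q^{-\tau}/4,\,4^{d-1}Q^{-1+(d-1)\tau})$ with $\eta_1\cdots\eta_d=Q^{-1}$. Minkowski's theorem covers every point of $B$ by some $R(q,\bmeta)$ with $q\le 2Q$.
\item The non-divergence Lemma~\ref{l:nondivergence} discards $q<cQ$. It needs only a Fourier \emph{upper} bound, and only at the fixed top scale $|B|$, so any small $\beta>0$ suffices. This gives Lemma~\ref{l:ubiquitous} with no lower counting and no overlap analysis.
\item Because $K_d=[0,1]$, each box meeting $B$ contains, after doubling, a comparable box centred at a point of $K$ with last coordinate $p_d/q$. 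Shrinking it in the $d$-th direction to a ball of radius $Q^{-\tau}/(4q)$ lands in $A(q,q^{-\tau},\bzero)$.
\item The rectangular $5r$-covering lemma (Lemma~\ref{l:5r covering}) extracts a disjoint subfamily of $\asymp\mu(B)Q^{(1+\tau)(\delta-1)+2-(d-1)\tau}$ boxes. The measure $\nu$ is then the uniform average of $\mu$ over the shrunk balls, not $\mu|_F/\mu(F)$.
\item Disjointness and the Ahlfors regularity of the factors bound, at every scale, the number of balls meeting $B(\bx,r)$. This gives $\nu(B(\bx,r))\ll r^{s}/\mu(B)$ with exactly $s=\delta+\frac{1+d}{1+\tau}-d$ and no $\beta$-loss, since the selection thins out clusters near rational hyperplanes.
\item Theorem~\ref{t:weaken}(2) then gives full $\hm^s$-measure.
\end{enumerate}

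Your Step~3 is the paper's argument; the $\psi_\tau/\log q$ modification is unnecessary. The upper bound it relies on needs only an upper counting estimate, which the paper takes from Corollary~\ref{c:self-similar set}.
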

\begin{remark}\label{r:higher dimension}
	The key ingredient of Theorem \ref{t:example3} can be described as follows. It is important to note that the classical mass transference principle (MTP for short) of Beresnevich and Velani~\cite{BeresnevichVelaniMTPann} cannot be applied directly in this setting. The main obstacle is that some rational vectors $\bp/q$ may lie outside $K$. In particular, it can happen that $B(\bp/q,r) \cap K \neq \varnothing$ while $B(\bp/q,r^\tau) \cap K = \varnothing$ for any $\tau > 1$ unless $K=[0,1]^d$. A genuine way to bypass this difficulty is to shrink the ball not in every direction, but only in, for example, the $d$-th direction. This motivates the requirement that the $d$-th component of $K$ be the full interval $[0,1]$,
	so that shrinking a ball in this direction maintains a substantial intersection with $K$.  More precisely, let $K$ be a missing digits set satisfying (A1), and let $\prod_{j=1}^{d} B(\frac{p_j}{q}, r_j) \subset [0,1]^d$. If $K\cap \prod_{j=1}^{d}B(\frac{p_j}{q},r_j)\ne\varnothing$, then
	\[K\cap \left(\Bigg(\prod_{j=1}^{d-1}B\bigg(\frac{p_j}{q},r_j\bigg)\Bigg)\times B\bigg(\frac{p_d}{q},r_d^\tau\bigg)\right)\ne\varnothing\quad\text{for any $\tau>1$},\]
	since the $d$-th component of $K$ is $[0,1]$.
	This strategy allows us to adapt the idea of the MTP without the stronger assumption that $K = [0,1]^d$. However, because the original MTP applies only to $\limsup$ sets defined by balls, the approach we adopt here is closer in spirit to the MTP from rectangles to small open sets \cite[Theorem 2.11]{HeMTPadv}, as established by the first author.
\end{remark}

\subsection*{Organization of the paper}

In Section \ref{s:preliminaries}, we introduce a useful tool for proving Theorems \ref{t:main} and \ref{t:example3}.
Sections \ref{s:upper bound} and \ref{s:lower bound} are devoted to proving Theorem \ref{t:main}, while Section \ref{s:corollary} presents the proof of Corollary \ref{c:corollary}.
In Section~\ref{s:example}, we focus on two specific classes of sets: self-similar sets and missing digits sets, which are treated in two separate subsections. More precisely, we verify that these two classes satisfy the $(\alpha,\beta,\bta)$-local counting property for some parameters $\alpha$, $\beta$ and $\bta\in\R^d$, respectively, which in turn yields Theorems~\ref{t:example1} and~\ref{t:example2}. In Section \ref{s:higher dimension}, we prove Theorem \ref{t:example3}.

\section{Hausdorff measure and content}\label{s:preliminaries}
Here and hereafter, we will assume
that $K\subset\R^d$ is a compact subset equipped with a $\delta$-Ahlfors regular probability measure $\mu$.

In this section, we introduce a useful tool for estimating the Hausdorff dimension and measure of the set $K\cap W_d(\psi,\bta)$.

Let $0<s\le d$. For a set $E\subset \R^d$ and $\eta>0$, let
\[\mathcal H_\eta^s(E):=\inf\bigg\{\sum_{i}|B_i|^s:E\subset \bigcup_{i\ge 1}B_i, \text{ where $B_i$ are balls with $|B_i|\le \eta$}\bigg\}.\]
The {\em $s$-dimensional Hausdorff measure} of $E$ is defined as
\[\hm^s(E):=\lim_{\eta\to 0^+}\mathcal H_\eta^s(E).\]
When $\eta=\infty$, $\hc^s(E)$ is referred to as {\em $s$-dimensional Hausdorff content} of $E$.

In \cite{FalconerLIPintroduce}, Falconer introduced and systematically developed the notion of the large intersection property, which provides a powerful theoretical framework for studying the Hausdorff dimension of $\limsup$ sets. This property ensures that certain subsets maintain large Hausdorff dimension even under countable intersections, making it particularly useful in Diophantine approximation.

\begin{definition}[\cite{FalconerLIPintroduce}]\label{d:LIPdim}
	Let $0<s\le \hdim K$. We define $\scg_{\dim}^s(K)$ to be the class of $G_\delta$-subsets $E$ of $K$ such that there exists a constant $c>0$ satisfying the following property: for any $0<t<s$ and any ball $B \subset K $,
	\begin{equation}\label{eq:defhcbounddim}
		\hc^t(E\cap B)>c\hc^t(B).
	\end{equation}
\end{definition}
	Intuitively, the condition \eqref{eq:defhcbounddim} ensures that the set $E$ is ``large'' in every ball, in the sense that it occupies a uniformly positive proportion of the $t$-dimensional Hausdorff content of $B$ for all $t<s$.

\begin{theorem}[\cite{FalconerLIPintroduce}]\label{t:LIPdim}
	Let $0<s\le d$. The class $\scg_{\dim}^s(K)$ is closed under countable intersections. Moreover, for any $E\in\scg_{\dim}^s(K)$, we have
	\[
	\hdim E\ge s.
	\]
\end{theorem}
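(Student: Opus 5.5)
The two claims are proved separately: first closure under countable intersections, then the dimension bound $\hdim E\ge s$. Both rest on Hausdorff content at scales $t<s$. The key fact is the mass distribution principle in reverse, i.e.\ Frostman's lemma for $\hc^t$ on $K$. Since $\mu$ is $\delta$-Ahlfors regular, $\hc^t(B)\asymp |B|^t$ for balls $B$ centred in $K$ and $t\le\delta$. In particular $\hc^t(B)>0$ for every ball $B\subset K$.

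\emph{Dimension bound.} Take $E\in\scg_{\dim}^s(K)$ and any $t<s$. Applying \eqref{eq:defhcbounddim} to a single ball $B$ gives $\hc^t(E)\ge\hc^t(E\cap B)>c\,\hc^t(B)>0$. Since $\hm^t\ge\hc^t$, this yields $\hm^t(E)>0$, so $\hdim E\ge t$. Letting $t\uparrow s$ gives $\hdim E\ge s$.

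\emph{Intersections.} This is the main obstacle. A uniform constant $c$ must be produced for $\bigcap_n E_n$, even though each $E_n$ comes with its own $c_n$. I would follow Falconer's original argument.

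First, pass from $G_\delta$ sets to open sets. Each $E_n$ is a countable intersection of open sets $U_{n,k}$, and each $U_{n,k}\supset E_n$ inherits the content condition with constant $c_n$. So it suffices to treat a countable family of open sets $U_1,U_2,\dots$, each satisfying \eqref{eq:defhcbounddim} with constant $c_n$. The goal is to show $\hc^t\bigl(B\cap\bigcap_n U_n\bigr)\gg \hc^t(B)$ with an absolute constant.

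Second, replace $\hc^t$ by a net content. Fix dyadic cubes intersected with $K$, and define $\mathcal M^t_\infty$ via covers by these cubes. It is comparable to $\hc^t$ up to constants depending only on $d$, $t$ and the Ahlfors constants.

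Third, prove the key lemma. Fix $t<t'<s$. If an open $U$ satisfies $\mathcal M^{t'}_\infty(U\cap D)\ge c\,\mathcal M^{t'}_\infty(D)$ for all net cubes $D$, then
\[
\mathcal M^t_\infty(F\cap U)\ge \mathcal M^t_\infty(F)
\]
for every set $F$, with no loss at all. The gain from lowering the exponent comes from a scaling comparison between $t'$ and $t$. Any cover of $F\cap U$ by cubes can be refined inside each cube $D$ using the density of $U$. Summing the geometric series $\sum_k 2^{-k(t'-t)}$ and choosing $c$-dependent cutoffs absorbs the constant $c$. This is the heart of Falconer's argument, and I expect the bookkeeping of this step to be the delicate part.

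Finally, iterate the lemma over $U_1,U_2,\dots$ with the exponent fixed at $t$. Using the increasing-sets lemma for net content, together with compactness of $\overline{B}\cap K$ and openness of the $U_n$, pass to the limit. This gives $\hc^t\bigl(B\cap\bigcap_n U_n\bigr)\gg \hc^t(B)$ with a constant independent of $n$, which proves closure.
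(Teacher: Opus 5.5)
Your dimension bound is correct. So is the reduction from the $G_\delta$ sets $E_n$ to the open sets $U_{n,k}$; the paper gives no proof of this theorem and only quotes Falconer. The gap is your key lemma, which is false as stated, i.e.\ for \emph{every} set $F$. Take $K=[0,1]^d$ with Lebesgue measure, a compact set $C\subset(0,1)^d$ with $\lm_d(C)=0$ but $\hm^t(C)>0$, and $U=\R^d\setminus C$. Let $D\subset[0,1]^d$ be a net cube and take a net cover of $U\cap D$. Each covering cube meeting $D$ either contains $D$, or is a subcube $J\subset D$; in the latter case these subcubes cover a set of full measure in $D$, so $\sum|J|^{t'}\ge|D|^{t'-d}\sum|J|^{d}\ge|D|^{t'}$. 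The remaining net cubes meeting $K$ are handled the same way. So $U$ satisfies your hypothesis with $c=1$, yet $F=C$ gives $\mathcal M^t_\infty(F\cap U)=0<\mathcal M^t_\infty(F)$. The density of $U$ only constrains covers of $U$ inside net cubes. It says nothing about the part of a thin set $F$ lying outside $U$, so no refinement of a cover of $F\cap U$ can yield a cover of $F$.

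What is true, and is the form Falconer's argument uses, is your conclusion for $F=D$ a single net cube. It extends to a finite union $G$ of disjoint net cubes $D_j$: in a net cover of $G\cap U$, for each $D_j$ not contained in a covering cube, replace the covering cubes lying inside $D_j$ by $D_j$ itself, which does not increase the cost. By the increasing sets lemma it then extends to countable unions of net cubes.

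The passage to $\bigcap_n U_n$ has to be built on this restricted lemma. Content is not continuous along decreasing open sets, so $\mathcal M^t_\infty(B\cap U_1\cap\dots\cap U_m)\ge\mathcal M^t_\infty(B)$ for every $m$ gives nothing about $B\cap\bigcap_n U_n$, and compactness cannot be applied to these open sets after the fact. Instead, starting from $G_0=B$, choose inductively finite unions $G_m$ of net cubes with $\overline{G_m}\subset G_{m-1}\cap U_m$ and $\mathcal M^t_\infty(G_m)\ge\mathcal M^t_\infty(G_{m-1}\cap U_m)-2^{-m}\varepsilon=\mathcal M^t_\infty(G_{m-1})-2^{-m}\varepsilon$; this uses the lemma with $F=G_{m-1}$ plus the increasing sets lemma. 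Then $\bigcap_m\overline{G_m}\subset B\cap\bigcap_nU_n$ is a decreasing intersection of compact sets, along which Hausdorff content defined by open covers is continuous. Comparing it with net content costs only a constant depending on $d$ and the Ahlfors constants, and this is your uniform $c$.

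The iteration needs the lemma with genuinely no loss, since a factor $<1$ per step would compound to $0$. On a fractal $K$ this means normalising the net content by $\mu(D)^{t/\delta}$ rather than $|D|^t$. It also means using cubes adapted to $\mu$ (e.g.\ Christ--David cubes), so that the hypothesis on balls centred in $K$ transfers to net cubes.
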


However, Falconer's original definition could only provide lower bounds for the Hausdorff dimension, not the Hausdorff measure. This limitation was recently overcome by the first author in \cite{HeMTPadv}, who introduced a refined version of Definition \ref{d:LIPdim}.

\begin{definition}[{\cite[Definition 2.3]{HeMTPadv}}]\label{d:LIPmea}
	Let $0<s\le \hdim K$. We define $\scgm^s(K)$ to be the class of $G_\delta$-subsets $E$ of $K$ such that there exists a constant $c>0$ satisfying the following property: for any ball $B \subset K $,
	\begin{equation}\label{eq:defhcboundmea}
		\hc^s(E\cap B)>c\hc^s(B).
	\end{equation}
\end{definition}
The difference lies in whether the critical value $t=s$ can be attained. If the critical value cannot be attained, it is therefore natural that one cannot obtain information about the Hausdorff measure, as the Hausdorff measure is particularly sensitive to the critical dimension.
On the other hand, if the critical value can be attained, one may expect to obtain information about the Hausdorff measure, as described in the theorem below.

\begin{theorem}[{\cite[Theorem 2.4]{HeMTPadv}}]
	Let $0<s\le d$. The class $\scgm^s(K)$ is closed under countable intersections. Moreover, for any $E\in\scgm^s(K)$, we have
	\[
	\hm^s(E)=\hm^s(K).
	\]
\end{theorem}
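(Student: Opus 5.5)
The plan follows Falconer's scheme for $\scgd^s(K)$, working at the critical exponent $s$ itself rather than at all $t<s$. Throughout, $\hc^s$ is replaced by a comparable dyadic net content $\mathcal M^s_\infty$, defined by covers with dyadic cubes, with $\mathcal M^s_\infty\asymp\hc^s$ up to constants depending only on $d$. Net contents have two features I will use: Rogers' increasing sets lemma ($\mathcal M^s_\infty(\bigcup E_n)=\lim\mathcal M^s_\infty(E_n)$ for increasing $E_n$), and exact subadditivity over dyadic subcubes. So \eqref{eq:defhcboundmea} is equivalent, up to changing $c$, to $\mathcal M^s_\infty(E\cap D)\ge c\,\mathcal M^s_\infty(D\cap K)$ for every dyadic cube $D$ meeting $K$ in a non-negligible way.

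\textbf{Step 1 (upgrading $c$ to $1$).} I will show that if a $G_\delta$ set $E$ has proportion $c$ in every cube, then its proportion is in fact $1$. Write $E=\bigcap_n U_n$ with $U_n$ open and decreasing. Suppose $\mathcal M^s_\infty(E\cap D)<\lambda\,\mathcal M^s_\infty(D\cap K)$ for some $\lambda<1$. Take an almost optimal dyadic cover of $E\cap D$. The part of $D\cap K$ not covered by it carries net content bounded below, and I subdivide it into small dyadic cubes $D'$. The hypothesis then gives $E$-content $\ge c\,\mathcal M^s_\infty(D')$ inside each $D'$. Summing these contributions and using Ahlfors regularity (which makes $\mathcal M^s_\infty(D'\cap K)\asymp|D'|^s$ for $s\le\delta$, up to the usual convention when $s<\delta$) contradicts the near-optimality of the cover. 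Iterating this $c\mapsto c+c(1-c)\cdot$const argument pushes the proportion to $1$.

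\textbf{Step 2 (closure under countable intersections).} Let $E_k=\bigcap_n U_{k,n}$. By Step 1, each open set $U_{k,n}$ has full net content in every dyadic cube. Enumerate all the $U_{k,n}$ as a single sequence $V_1,V_2,\dots$. Fix $D$ and $\varepsilon>0$. I then build compact sets $F_j\subset D\cap V_1\cap\dots\cap V_j$ with $\mathcal M^s_\infty(F_j)\ge(1-\varepsilon)\mathcal M^s_\infty(D\cap K)$ inductively. Given $F_{j-1}$, write $V_j\cap D$ as an increasing union of compacts. Full content of $V_j$ in each small cube, together with the increasing sets lemma applied inside a cover of $F_{j-1}$, lets the content loss at step $j$ be kept below $\varepsilon 2^{-j}$. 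Then $\bigcap_j F_j\subset\bigcap_k E_k$ is a decreasing intersection of compacts, whose net content is the limit of the contents by compactness. This gives proportion $1-\varepsilon$, hence the intersection lies in $\scgm^s(K)$.

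\textbf{Step 3 ($\hm^s(E)=\hm^s(K)$).} Fix $\eta>0$ and a ball $B$. Ahlfors regularity lets me pack $B\cap K$ with $N\asymp(|B|/r)^\delta$ disjoint balls $B_i$ of radius $r<\eta$. By countable stability and the content bound applied in each $B_i$,
\[
\hm^s_\eta(E\cap B)\gtrsim \sum_i \hc^s(E\cap B_i)\ge c\sum_i\hc^s(B_i)\asymp (|B|/r)^{\delta}r^s .
\]
The first inequality needs a separation argument, which I get by passing to slightly shrunken disjoint net cubes. If $s<\delta$, letting $r\to0$ gives $\hm^s(E)=\infty=\hm^s(K)$. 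If $s=\delta$, then $\hm^\delta|_K\asymp\mu$, and the bound shows $\hm^\delta(E\cap B)\gg\hm^\delta(K\cap B)$ for every ball $B$. The Lebesgue density theorem applied to $K\setminus E$ then forces $\hm^\delta(K\setminus E)=0$.

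\textbf{Main obstacle.} I expect Step 1 to be the delicate point: the self-improvement of the constant at the critical exponent $s$, where there is no slack from $t<s$. The comparison between net content and $\hc^s$ restricted to $K$ must be done carefully, and it is precisely there that Ahlfors regularity of $\mu$ is used.
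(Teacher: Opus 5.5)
The paper does not prove this statement; it quotes it from \cite{HeMTPadv}, so your proposal has to stand on its own. Your architecture is the natural one: full proportion in a net content, then Falconer's exhaustion by compact sets, then packing plus density for the measure. However, Step~1 is not established by the mechanism you describe, and it is the only place where working at the critical exponent matters.

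Let $\{D_i\}$ be the near-optimal cover, $S=\sum_i|D_i|^s<\lambda\,\mathcal M^s_\infty(D\cap K)$, and $G=(D\cap K)\setminus\bigcup_iD_i$. A small cube $D'$ meeting $G$ lies in no $D_i$, so $E\cap D'$ is covered by the $D_i\subseteq D'$. The $E$-content you find inside $D'$ has therefore already been paid for in $S$. Summing over the disjoint $D'$ gives only $S\ge c\sum_{D'}\mathcal M^s_\infty(D'\cap K)\ge c\,\mathcal M^s_\infty(G)\ge c\bigl(\mathcal M^s_\infty(D\cap K)-S\bigr)$, i.e.\ $\lambda>c/(1+c)$. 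This is weaker than the hypothesis and contradicts nothing. Ahlfors regularity helps only if $\mu(G)>0$: then, for $s<\delta$, the number of small cubes meeting $G$ makes $\sum_{D'}|D'|^s$ blow up. That merely shows $\mu(G)=0$. It says nothing about a $\mu$-null $G$ of positive $s$-content, which for $s<\delta$ is the only case that can actually occur. So the iteration $c\mapsto c+c(1-c)\cdot\mathrm{const}$ has nothing behind it. Step~2, which needs proportion exactly $1$ (with proportion $c$ you lose a factor at every intersection), is left without its input.

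The missing idea is a multiscale stopping-time argument proving $\mathcal M^s_\infty(G)=0$. Use cubes adapted to $K$ (Christ or K\"aenm\"aki--Rajala--Suomala cubes, each containing and contained in balls centred in $K$ of radius comparable to its size $\ell(Q)$). This also replaces your undefined proviso about cubes meeting $K$ ``non-negligibly'', and the hypothesis then yields $\mathcal M^s_\infty(E\cap Q)\ge c'\ell(Q)^s$ for every cube $Q$. If $x\in G$ and $x\in Q\subseteq D$, no $D_i$ contains $Q$. Hence $E\cap Q\subseteq\bigcup_{D_i\subseteq Q}D_i$ and $\ell(Q)^s\le c'^{-1}\sum_{D_i\subseteq Q}|D_i|^s$. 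The cubes $m$ generations below $D$ that meet $G$ are disjoint and cover $G$. So $\mathcal M^s_\infty(G)$ is at most $c'^{-1}$ times the tail $\sum_{D_i\text{ of generation}\ge m}|D_i|^s$ of the convergent series $S$, which tends to $0$. Hence $\mathcal M^s_\infty(D\cap K)\le S$ for every such cover, i.e.\ proportion $1$, for all $s\le\delta$.

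Two smaller repairs are also needed.
\begin{itemize}
\item In Step~2 the sets $F_j$ must be finite unions of cubes. Full content of $V_j$ in each cube passes to $V_j\cap A$ when $A$ is a union of cubes: replace the cover cubes inside each cube $Q$ of $A$ by an optimal cover of $K\cap Q$. It does not pass to arbitrary compact sets. For $s<\delta$ and compact $N\subseteq K$ with $s<\hdim N<\delta$, the open set $\R^d\setminus N$ has full content in every cube yet misses $N$. The final compactness step must also be done with open balls, i.e.\ with $\hc^s$.
\item In Step~3 the quantifiers are reversed. Fix the packing radius $r$, take $\eta$ smaller than the gaps between the $B_i$, and only then let $r\to0$. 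With $\eta$ fixed, your lower bound would exceed $\hm^s_\eta(B)<\infty$.
\end{itemize}
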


Inspired by the mass transference principle from balls to open sets \cite{KoivusaloRamsMTPIMRN,ZhongMTPJMAA}, the author \cite{HeMTPadv} derived conditions that are significantly weaker than the original definitions, while still yielding the corresponding large intersection property.

\begin{theorem}[{\cite[Corollary 2.6]{HeMTPadv}}]\label{t:weaken}
	Let $0<s\le d$. Assume that $\{B_k\}$ is a sequence of balls in $K$ with radii tending to 0, and that $\mu(\limsup B_k)=1$. Let $\{E_n\}$ be a sequence of open sets (not necessarily contained in $B_k$). The following statement hold.
	\begin{enumerate}[\upshape(1)]
		\item If for any $0<t<s$, there exists a constant $c_t>0$ such that for any $B_k$,
		\[\limsup_{n\to\infty}\hc^t (E_n\cap B_k)>c_t\mu(B_k),\]
			then $\limsup E_n\in\scgd^s(K)$.
			\item If there exists a constant $c_s>0$ such that for any $B_k$,
			\[\limsup_{n\to\infty}\hc^s (E_n\cap B_k)>c_s\mu(B_k),\]
			then $\limsup E_n\in\scgm^s(K)$.
	\end{enumerate}
\end{theorem}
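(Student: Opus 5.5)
The plan is to verify the defining content inequalities of $\scgd^s(K)$ (resp.\ $\scgm^s(K)$) for the $G_\delta$ set $\limsup E_n=\bigcap_{N}U_N$, where $U_N:=\bigcup_{n\ge N}E_n$ is open. I would proceed in two stages. First, I would prove a uniform content bound for every open set $U_N$. Then I would pass from the open sets $U_N$ to their intersection. For the second stage I would use the standard characterization underlying Falconer's theory and its refinement in \cite{HeMTPadv}. That characterization says: if a decreasing sequence of open sets $U_N$ satisfies $\hc^t(U_N\cap B)\ge c'\hc^t(B)$ for all balls $B\subset K$ and all $N$, with $c'$ independent of $N$ and $B$, then $\bigcap_N U_N$ lies in the class. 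Its proof builds a nested Cantor-type subset ball by ball, using openness to fit small balls inside each $U_N$. This is exactly the mechanism behind the closure of the classes under countable intersections, so I take it as given. The proofs of (1) and (2) are identical, with $t<s$ in (1) and $t=s$ in (2).

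For the first stage, fix a ball $B$ centred in $K$, a value $N$, and $t\le s\le\delta$. Since $\mu(\limsup B_k)=1$ and the radii tend to $0$, I would apply the Vitali $5r$-covering lemma together with the doubling property of $\mu$ (which follows from Ahlfors regularity). This yields a finite disjoint family $\{B_k\}_{k\in\mathcal K}$ of the given balls with the following properties:
\begin{itemize}
\item each $B_k$ lies in $\tfrac12 B$;
\item each $B_k$ has arbitrarily small radius;
\item $\sum_{k\in\mathcal K}\mu(B_k)\gg\mu(B)\asymp|B|^\delta$.
\end{itemize}
For each $k\in\mathcal K$ the hypothesis provides some $n_k\ge N$ with $\hc^t(E_{n_k}\cap B_k)>c_t\mu(B_k)$. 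By Frostman's lemma (the content version) there is then a measure $\nu_k$ supported on $\overline{E_{n_k}\cap B_k}$ with total mass $\asymp c_t\mu(B_k)$ and $\nu_k(D)\le|D|^t$ for all balls $D$. If needed, I would shrink slightly, using openness of $E_{n_k}$, to keep the support inside $U_N$. Set $\nu=\sum_k\nu_k$.

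Next I bound $\nu(D)$ for an arbitrary ball $D$. If $D$ meets only one $B_k$, then $\nu(D)\le|D|^t$. If $D$ meets several of the $B_k$, then each such $B_k$ has radius $\ll|D|$, since the radii were chosen tiny and $D$ reaches across a gap between disjoint balls (after discarding the trivial case). In that case
\[\nu(D)\le\sum_{B_k\cap D\ne\varnothing}\mu(B_k)\ll\mu(3D)\ll|D|^\delta\le|D|^t|B|^{\delta-t},\]
using $|D|\lesssim|B|$ and $t\le\delta$; larger balls $D$ can be replaced by $B$ itself. In both cases $\nu(D)\ll|D|^t|B|^{\delta-t}$. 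The mass distribution principle then gives
\[\hc^t(U_N\cap B)\ge\hc^t(\mathrm{supp}\,\nu)\gg\frac{\nu(\R^d)}{|B|^{\delta-t}}\gg c_t\frac{|B|^\delta}{|B|^{\delta-t}}=c_t|B|^t\asymp\hc^t(B).\]
The implied constants here are independent of $N$ and $B$.

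The main obstacle is the passage to the intersection, especially in case (2), where $t=s$ and no slack in the exponent is available. Hausdorff content is not continuous along decreasing sequences of sets, so the uniform bound for the $U_N$ does not by itself transfer. My first attempt would be to run the nested construction directly. Inside $B$, pick the finitely many balls and Frostman pieces described above for $U_{N_1}$. Inside each piece, which is open, repeat the construction for $U_{N_2}$ at a much smaller scale, and continue. Then glue the resulting measures multiplicatively and check the estimate $\nu(D)\ll|D|^s|B|^{\delta-s}$ at every level. This works because the bound obtained at each level is scale-invariant with a uniform constant, which is exactly the property established in the first stage.
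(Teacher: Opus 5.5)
The paper does not prove this statement; it imports it from \cite{HeMTPadv}. Judged on its own terms, your proposal breaks down in the first stage, in the case where $D$ meets a single $B_k$. There Frostman gives only $\nu_k(D)\le|D|^t$. Because the total mass of $\nu$ is merely $\asymp c_t\mu(B)\asymp|B|^\delta$, the mass distribution principle yields $\hc^t(U_N\cap B)\gg|B|^t$ only if $\nu(D)\ll|D|^t|B|^{\delta-t}$ for all $D$. Since $|B|^{\delta-t}$ is arbitrarily small when $t<\delta$, your conclusion ``in both cases $\nu(D)\ll|D|^t|B|^{\delta-t}$'' fails in the first case. In part (1) this can be repaired, and precisely because of the slack in the exponent. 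Apply the hypothesis at some $t'\in(t,s)$, and take all radii at most $\rho$ with $\rho^{t'-t}\le|B|^{\delta-t}$. Then $\nu_k(D)\le|D|^{t'}\le|D|^t|B|^{\delta-t}$ when $|D|\le|B_k|$, and $\nu_k(D)\le\nu_k(\R^d)\ll|B_k|^\delta\le|D|^t|B|^{\delta-t}$ when $|D|>|B_k|$. Your multi-ball case also needs a fix: disjoint balls may nearly touch, so a small $D$ can meet large $B_k$'s. However, only $O_d(1)$ disjoint balls of radius $>|D|$ can meet $D$, and the others lie in $3D$. With these repairs, stage one plus the Falconer-type characterization does give (1). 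So (1) and (2) are not proved ``identically''.

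For (2), where $t=s$, no single-generation construction can work. Your measure lives on $\bigcup_{k\in\mathcal K}(E_{n_k}\cap B_k)$, whose $s$-content is at most $\sum_k\hc^s(E_{n_k}\cap B_k)$. The hypothesis guarantees nothing beyond $\asymp\mu(B_k)$ for each term; think of $\mu$ Lebesgue on $[0,1]$ and $E_n\cap B_k$ an interval of length $\asymp|B_k|^{1/s}$. This caps the content at $\ll\mu(B)\asymp|B|^\delta$, far below $\hc^s(B)\asymp|B|^s$. Hence your argument does not establish the uniform bound $\hc^s(U_N\cap B)\gg\hc^s(B)$. The nested construction in your last paragraph relies on it as a ``scale-invariant bound at every level'', so it has nothing to stand on.

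The missing idea is the accumulation mechanism of the mass transference principle from balls to open sets \cite{KoivusaloRamsMTPIMRN,ZhongMTPJMAA}, which \cite{HeMTPadv} builds on. Inside $B$ one stacks about $\hc^s(B)/\mu(B)\asymp|B|^{s-\delta}$ generations of disjoint balls $B_k$ at rapidly decreasing scales. Each generation is chosen, using $\mu(\limsup B_k)=1$ again, away from the compact supports of the earlier pieces. If those supports already carry a fixed proportion of $\mu(B)$, their content is already $\gg|B|^s$. Each generation contributes content $\asymp c_s\mu(B)$. A ball $D$ with $|D|\le|B|$ meets pieces of only $O(1)$ generations coarser than itself, while each finer generation contributes $\ll\mu(3D)$. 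This gives $\nu(D)\ll|D|^s+|B|^{s-\delta}|D|^\delta\ll|D|^s$ with total mass $\asymp c_s|B|^s$. Only after this step does the passage to $\limsup E_n$ become meaningful.
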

With this result now at our disposal, the main ideas to prove Theorem \ref{t:main} are to verify certain sets under consideration satisfying some Hausdorff content bound. For this purpose, the following mass distribution principle will be crucial.
\begin{proposition}[Mass distribution principle {\cite[Lemma 1.2.8]{BishopPeresbook}}]\label{p:MDP}
	Let $ E $ be a Borel subset of $ \R^d $. If $ E $ supports a Borel probability measure $ \nu $ that satisfies
	\[\nu(B)\le c|B|^s,\]
	for some constant $ 0<c<\infty $ and for every ball $B$, then $ \hc^s(E)\ge1/c $.
\end{proposition}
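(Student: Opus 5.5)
The plan is to argue directly from the definition of $\hc^s$ by countable subadditivity of $\nu$; no earlier result of the paper is needed. Here ``$E$ supports $\nu$'' is read as $\nu(E)=1$, i.e.\ $\nu(\R^d\setminus E)=0$; this is the only property of $\nu$ used besides the ball estimate.

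Fix an arbitrary countable cover $E\subset\bigcup_{i\ge1}B_i$ by balls. The definition of $\hc^s$ (that is, $\mathcal H^s_\eta$ with $\eta=\infty$) places no restriction on the diameters, so every such cover is admissible. By monotonicity and countable subadditivity of $\nu$, together with the hypothesis applied to each ball,
\[
1=\nu(E)\le \nu\Big(\bigcup_{i\ge1}B_i\Big)\le \sum_{i\ge1}\nu(B_i)\le c\sum_{i\ge1}|B_i|^s .
\]
Hence $\sum_i|B_i|^s\ge 1/c$ for every admissible cover. Taking the infimum over all such covers gives $\hc^s(E)\ge 1/c$.

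There is essentially no obstacle. The only points to check are that the hypothesis $\nu(B)\le c|B|^s$ is assumed for every ball, including very large ones, so it applies to any $B_i$ in the cover, and that the cover need not consist of balls centred in $E$; both hold by the statement. If one instead read ``supports'' as $\operatorname{supp}\nu\subset\overline E$, then $\nu(E)=1$ can fail. For this reason I would state the $\nu(E)=1$ interpretation explicitly, which is how the principle is applied later, with $\nu$ living on a Cantor-type subset of the $\limsup$ set.
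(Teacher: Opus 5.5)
Your proof is correct and is the standard one: the paper gives no proof of its own but cites \cite[Lemma 1.2.8]{BishopPeresbook}, whose argument is the same subadditivity chain $1=\nu(E)\le\sum_i\nu(B_i)\le c\sum_i|B_i|^s$ over an arbitrary cover, followed by taking the infimum. Because the paper defines $\hc^s$ through covers by balls, your version is even slightly more direct, since you do not need to enlarge arbitrary covering sets to balls.
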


\section{Proof of Theorem \ref{t:main}: upper bound of $\hdim\big(K\cap W_d(\psi,\theta)\big)$}\label{s:upper bound}

Before giving the proof of the upper bound for
$\hdim(K\cap W_d(\psi,\bta))$, we first investigate the geometric structure of the set
$K \cap A_Q(\eta,\boldsymbol{\theta})$.
In particular, we show that, under a counting property weaker than the $(\alpha,\beta,\boldsymbol{\theta})$-local counting property,
this set admits sharp covering estimates.

\begin{lemma}\label{l:global counting}
	Suppose that for all sufficiently large $Q\ge 1$ and all $Q^{-\alpha}\le\eta\le Q^{-1/d}$, one has
	\begin{equation}\label{eq:global counting}
		\mu\bigl(A_Q(\eta,\bta)\bigr)
		\asymp Q\eta^d,
	\end{equation}
	where the implied constants are independent of $Q$ and $\eta$. Then, for all sufficiently large $Q\ge 1$ and all $Q^{-\alpha}\le\eta\le Q^{-1/d}$, the set $K\cap A_Q(\eta,\bta)$ can be covered by
	\[
	\asymp Q^{1+\delta}\eta^{d-\delta}
	\]
	balls of radius $Q^{-1}\eta$.
\end{lemma}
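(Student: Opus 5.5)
The plan is a Vitali-type selection of disjoint small balls, with the counting hypothesis \eqref{eq:global counting} converting between $\mu$-mass and the number of balls via Ahlfors regularity.

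\emph{Upper bound on the number of balls.} Let $r=Q^{-1}\eta$. Choose a maximal $r$-separated family $\{\bx_i\}_{i\in I}\subset K\cap A_Q(\eta,\bta)$. Then the balls $B(\bx_i,2r)$ cover $K\cap A_Q(\eta,\bta)$, and the balls $B(\bx_i,r/2)$ are pairwise disjoint. Each $\bx_i$ lies within distance $\eta/q<Q^{-1}\eta=r$ of some point $(\bp+\bta)/q$ with $Q\le q<2Q$. Hence $B(\bx_i,r/2)\subset A_Q(2\eta,\bta)$, since every $\by$ in this ball satisfies $|\by-(\bp+\bta)/q|<\tfrac32 r\le 2\eta\cdot\tfrac{1}{q}\cdot\tfrac{3q}{4Q}$, which is less than $2\eta/q$ because $q<2Q$.

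By Ahlfors regularity (centres in $K$, and $r<|K|$ for large $Q$), we have $\mu(B(\bx_i,r/2))\gg r^\delta$. Disjointness and \eqref{eq:global counting} applied with $2\eta$ then give
\[
\#I\cdot r^\delta\ll\mu\bigl(A_Q(2\eta,\bta)\bigr)\asymp Q\eta^d.
\]
The endpoint case $\eta=Q^{-1/d}$ needs care: either shrink by a constant (use $\eta/2$ in place of $2\eta$), or note that $A_Q(2\eta)$ has measure at most $\ll Q\eta^d$ by monotonicity together with the upper bound at a slightly larger $Q$-scale, or simply use $\mu\le1$ there, since $Q\eta^d=1$. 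This yields $\#I\ll Q\eta^d\cdot Q^\delta\eta^{-\delta}=Q^{1+\delta}\eta^{d-\delta}$. Finally, balls of radius $2r$ can each be split into a bounded number of balls of radius $r$.

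\emph{Lower bound on the number of balls.} Suppose $K\cap A_Q(\eta,\bta)$ is covered by $N$ balls of radius $r$; without loss of generality each meets $K$, so after doubling we may take them centred in $K$. Then
\[
\mu\bigl(A_Q(\eta,\bta)\bigr)\le\sum \mu(\text{ball})\ll N r^\delta.
\]
Using the lower bound $\mu(A_Q(\eta,\bta))\gg Q\eta^d$ we get $N\gg Q^{1+\delta}\eta^{d-\delta}$. Here $\mu$ is supported on $K$, so $\mu(A_Q)=\mu(K\cap A_Q)$. Thus the minimal cover size is $\asymp Q^{1+\delta}\eta^{d-\delta}$.

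\emph{Main obstacle.} The only delicate point is the constant-factor change $\eta\mapsto2\eta$, which can leave the admissible range $[Q^{-\alpha},Q^{-1/d}]$ at the top endpoint. I would handle it by replacing the separation radius with a smaller constant multiple of $r$ (for example $r/4$ balls inside $A_Q(\eta(1+\tfrac14\cdot 2),\bta)$). Alternatively, I would observe that the required upper bound $\mu(A_Q(c\eta))\ll Q\eta^d$ is trivial when $Q\eta^d\asymp1$, and otherwise $c\eta$ stays in range for large $Q$.
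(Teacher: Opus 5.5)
Your proof is correct and follows essentially the paper's argument. Both rest on the inclusion of the $Q^{-1}\eta$-neighbourhood of $K\cap A_Q(\eta,\bta)$ in $A_Q(c\eta,\bta)$ for a fixed $c$ (the paper takes $c=4$), combined with the counting hypothesis and Ahlfors regularity. You make explicit three things the paper passes over: the separated-set/disjoint-balls step, the lower bound on the number of balls, and the top-endpoint issue that arises when the paper applies the hypothesis at $4\eta$.

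Your fix for the endpoint, using $\mu\le 1\ll Q\eta^d$ whenever $c\eta>Q^{-1/d}$, is the right one. Shrinking the radius does not help, since it still forces some $c>1$.

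There is one small arithmetic slip. The inequality $\tfrac32 r<2\eta/q$ holds only for $q<\tfrac43Q$. Instead bound $|\by-(\bp+\bta)/q|<\eta/q+\eta/(2Q)<2\eta/q$, which uses only $q<2Q$.
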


\begin{proof}
	The conclusion does not follow directly from \eqref{eq:global counting}, since
	the `rational' points $(\bp+\bta)/q$ need not lie in $K$, and therefore the size
	of the intersection
	\[
	K\cap B\left(\frac{\bp+\bta}{q},\,\frac{\eta}{q}\right)
	\]
	may vary significantly for different choices of $(\bp+\bta)/q$.

	Let $\bx\in K\cap A_Q(\eta,\bta)$. By definition, there exists
	$(\bp+\bta)/q$ with $Q\le q<2Q$ such that
	\[
	\left|\bx-\frac{\bp+\bta}{q}\right|<\frac{\eta}{q}\le \frac{\eta}{Q}.
	\]
	If $\by$ satisfies $|\bx-\by|<Q^{-1}\eta$, then
	\[
	\left|\by-\frac{\bp+\bta}{q}\right|
	\le |\by-\bx|+\left|\bx-\frac{\bp+\bta}{q}\right|
	<\frac{2\eta}{Q}<\frac{4\eta}{q},
	\]
	which implies that $\by\in A_Q(4\eta,\bta)$. Consequently,
	\begin{equation}\label{eq:covering inclusion}
		K\cap A_Q(\eta,\bta)
		\subset \bigcup_{\bx\in K\cap A_Q(\eta,\bta)} B(\bx,Q^{-1}\eta)
		\subset K\cap A_Q(4\eta,\bta).
	\end{equation}
	By our assumption
	\eqref{eq:global counting}, the $\mu$-measures of the two outer sets in
	\eqref{eq:covering inclusion} are comparable. Therefore,
	\begin{equation}\label{eq:muunion}
		\mu\bigg(\bigcup_{\bx\in K\cap A_Q(\eta,\bta)} B(\bx,Q^{-1}\eta)\bigg)
		\asymp Q\eta^d.
	\end{equation}
	By the $\delta$-Ahlfors regularity of $\mu$, this implies that
	$K\cap A_Q(\eta,\bta)$ can be covered by
	\[
	\asymp \frac{Q\eta^d}{(Q^{-1}\eta)^\delta}
	= Q^{1+\delta}\eta^{d-\delta}
	\]
	balls of radius $Q^{-1}\eta$.
\end{proof}

Let $\alpha$ be as in Theorem~\ref{t:main}, and assume that $1/d < \tau(\psi)<\alpha$.
Obviously, the $(\alpha,\beta,\boldsymbol{\theta})$-local counting property implies the global estimate \eqref{eq:global counting}, and hence Lemma~\ref{l:global counting}
provides a sharp covering of $K \cap A_Q(\eta,\boldsymbol{\theta})$ with $Q^{-\alpha}\le\eta\le Q^{-1/d}$. Accordingly, to obtain the upper bound of $K\cap W_d(\psi,\bta)$, we define
\[\phi(q)=\max\{\psi(q),q^{-\alpha}\}.\]
It is evident that $K\cap W_d(\psi,\bta)\subset K\cap W_d(\phi,\bta)$ and  $\phi$ inherits the monotonicity from $\psi$. Moreover, since $\tau(\psi)<\alpha$, we have
\[\tau(\phi)=\tau(\psi)<\alpha.\]
In what follows, we bound the Hausdorff dimension of $K\cap W_d(\phi,\bta)$ from above, which immediately gives an upper bound for that of $K\cap W_d(\psi,\bta)$.

By the monotonicity of $\phi$, it is straightforward to verify that if $\bx\in K\cap W_d(\phi,\bta)$, then there exist infinitely many integers $m$ such that
\[\|q\bx-\bta\|<\phi(2^m)\quad\text{for some $2^m\le q<2^{m+1}$}.\]
Equivalently, $\bx\in K\cap A_{2^m}(\phi(2^m),\bta)$ for infinitely many $m$.
Therefore, we have
\[	K\cap W_d(\phi,\bta)\subset \bigcap_{M=1}^\infty\bigcup_{m=M}^\infty K\cap A_{2^m}\big(\phi(2^m),\bta\big).\]
Note that for each $M\ge 1$, the set $\bigcup_{m=M}^\infty K\cap A_{2^m}(\phi(2^m),\bta)$ is a cover of $K\cap W_d(\phi,\bta)$. Since $\phi(2^m)\ge 2^{-m\alpha}$ for all $m\ge 1$, Lemma \ref{l:global counting} is applicable to $K\cap A_{2^m}(\phi(2^m),\bta)$ whenever $m$ is sufficiently large. Therefore, for all sufficiently large $m$, it follows from Lemma \ref{l:global counting} that $K\cap A_{2^m}(\phi(2^m),\bta)$ can be covered by
\[
\asymp 2^{m(1+\delta)}\phi(2^m)^{d-\delta}
\]
balls of radius $2^{-m}\phi(2^m)$. Let $1/d<\tau<\tau(\phi)$. Then, for all sufficiently large $q$, we have $\phi(q)\le q^{-\tau}$.  For any $0<s\le\delta$, by the definition of $s$-dimensional Hausdorff measure,
\[\begin{split}
	\hm^s\big(K\cap W_d(\phi,\bta)\big)&\ll\liminf_{M\to\infty}\sum_{m=M}^{\infty} 2^{m(1+\delta)}\phi(2^m)^{d-\delta}\cdot\big(2^{-m}\phi(2^m) \big)^s\\
	&\le \liminf_{M\to\infty}\sum_{m=M}^{\infty} 2^{m(1+\delta)}2^{-m\tau(d-\delta)}\cdot(2^{-m}2^{-m\tau})^s\\
	&=\liminf_{M\to\infty}\sum_{m=M}^{\infty} 2^{m(1+\delta-\tau(d-\delta)-s(1+\tau))}.
\end{split}\]
Observe that the above $\liminf$ tends to zero if and only if the exponent satisfies
 \[1+\delta-\tau(d-\delta)-s(1+\tau)<0,\]
which is equivalent to
 \[s>\frac{-\tau(d-\delta)+1+\delta}{1+\tau}=\delta+\frac{1+d}{1+\tau}-d.\]
Therefore,
\[\hdim\big(K\cap W_d(\phi,\bta)\big)\le \delta+\frac{1+d}{1+\tau}-d.\]
Since $\tau<\tau(\phi)=\tau(\psi)$ is arbitrary, we conclude that
\[\hdim\big(K\cap W_d(\phi,\bta)\big)\le \delta+\frac{1+d}{1+\tau(\phi)}-d,\]
which completes the proof of \eqref{eq:upper bound} in Theorem \ref{t:main}.

 \section{Proof of Theorem \ref{t:main}: lower bound of $\hdim\big(K\cap W_d(\psi,\theta)\big)$}\label{s:lower bound}
The goal of this section is to prove the lower bound described in Theorem~\ref{t:main}. By the monotonicity of $\psi$, it is straightforward to verify that if $\bx\in K$ and there exist infinitely many integers $m$ such that
\[\|q\bx-\bta\|<\psi(2^{m+1})\quad\text{for some $2^m\le q<2^{m+1}$},\]
then $\bx\in K\cap W_d(\psi,\bta)$.
Therefore, we have
\begin{equation}\label{eq:lower inclusion}
\begin{split}
		K\cap W_d(\psi,\bta)&\supset \bigcap_{M=1}^\infty\bigcup_{m=M}^\infty K\cap A_{2^m}\big(\psi(2^{m+1}),\bta\big)\\
	&=\limsup_{m\to\infty} K\cap A_{2^m}\big(\psi(2^{m+1}),\bta\big).
\end{split}
\end{equation}
To establish a lower bound for the Hausdorff dimension (or measure) of
\(K \cap W_d(\psi,\bta)\), it suffices, by Theorem~\ref{t:weaken},
to show that for some suitable parameter \( s < \delta \),
\[\limsup_{m\to\infty}\hc^s\Big(B\cap A_{2^m}\big(\psi(2^{m+1}),\bta\big)\Big)\gg \mu(B)\]
holds for all ball $B$ in $K$, where the implied constant is independent of $B$.

Now, assume that $\mu$ satisfies the $(\alpha,\beta,\boldsymbol{\theta})$-local counting property for some $1/d<\alpha<\frac{1+\delta-\beta\delta}{d-\delta}$, $0<\beta<1$ and $\bta\in\R^d$.

Let $B\subset K$ be fixed. Choose $\tau$ so that $1/d<\tau(\psi)<\tau<\alpha$, this is possible due to our assumption $\tau(\psi)<\alpha$. Hence, for any $c>0$, there exists infinitely many $m$ such that
\begin{equation}\label{eq:assume}
	\psi(2^{m+1})>c2^{-m\tau}> 2^{-m\alpha}.
\end{equation}
Here we have introduced the constant $c>0$ so that, after completing the proof of \eqref{eq:lower bound} in Section \ref{ss:general case},
the same construction can be used with minimal modification to establish \eqref{eq:s} in Section \ref{ss:special case}.

Let $m \ge \log_2|B|^{-1/\beta}$ be an arbitrary  integer satisfying \eqref{eq:assume} and such that the local estimate \eqref{eq:local counting} can be applied with $Q$ replaced by $2^m$. Note that there are still infinitely many admissible choices of $m$. For simplicity, write
\begin{equation}
	Q=2^m\qaq F=B\cap A_Q(cQ^{-\tau},\bta).
\end{equation}
Define a probability measure supported on $F\subset B\cap A_{2^m}(\psi(2^{m+1}),\bta)$ by
\begin{equation}\label{eq:nu}
	\nu:=\frac{\mu|_{F}}{\mu(F)}.
\end{equation}
Since $m\ge \log_2 |B|^{-1/\beta}$ (equivalently $|B|\ge 2^{-m\beta}=Q^{-\beta}$) and $cQ^{-\tau}>Q^{-\alpha}$ (see \eqref{eq:assume}), by the $(\alpha,\beta,\bta)$-local counting property of $\mu$,
\begin{equation}\label{eq:mu(F)}
	\mu(F)=\mu\bigl(B \cap A_Q(cQ^{-\tau},\bta)\bigr)\stackrel{\eqref{eq:local counting}}{\asymp} \mu(B)\cdot Q^{1-\tau d}.
\end{equation}

Next, we estimate the $\nu$-measure of an arbitrary ball. Let $\bx \in F$ and let $0<r<|B|$. We proceed by considering several ranges of $r>0$.

\noindent {\bf Case 1}: $Q^{-\beta}\le r<|B|$. In this case, the $(\alpha,\beta,\bta)$-local counting property is applicable to the ball $B(\bx,r)$. By \eqref{eq:local counting} and the $\delta$-Ahlfors regularity of $\mu$, we have
\begin{equation}\label{eq:mu(B(x,r))}
	\mu\big(B(\bx,r)\cap A_Q(cQ^{-\tau},\bta)\big)\stackrel{\eqref{eq:local counting}}{\asymp} \mu\big(B(\bx,r)\big)\cdot Q^{1-\tau d}\asymp r^\delta\cdot Q^{1-\tau d}.
\end{equation}
Therefore,
\[\begin{split}
	\nu\big(B(\bx,r)\big)&\stackrel{\eqref{eq:nu}}{=}\frac{\mu|_{F}\big(B(\bx,r)\big)}{\mu(F)}\stackrel{\eqref{eq:mu(F)}}{\ll}\frac{\mu\bigl(B(\bx,r)\cap A_Q(cQ^{-\tau},\bta)\bigr)}{\mu(B)\cdot Q^{1-\tau d}}\stackrel{\eqref{eq:mu(B(x,r))}}{\asymp}\frac{r^\delta}{\mu(B)}\\
	& \le \frac{r^{\delta+\frac{1+d}{1+\tau}-d}}{\mu(B)},
\end{split}\]
since $\tau>1/d$.

\noindent {\bf Case 2}: $Q^{-\beta\delta/(\delta+\frac{1+d}{1+\tau}-d)}\le r< Q^{-\beta}$. Applying the estimate from Case 1 with $r=Q^{-\beta}$, we obtain
\[\nu\big(B(\bx,r)\big)\le\nu\big(B(\bx,Q^{-\beta})\big)\ll \frac{Q^{-\beta\delta}}{\mu(B)}\le \frac{r^{\delta+\frac{1+d}{1+\tau}-d}}{\mu(B)}.\]

The $(\alpha,\beta,\bta)$-local counting property allows us to control the $\nu$-measure of a ball $B(\bx,r)$ effectively when $r \ge Q^{-\beta\delta/(\delta+\frac{1+d}{1+\tau}-d)}$. However, this estimate breaks down at smaller scales. To deal with this difficulty, we will adopt different approaches depending on whether the conditions $\delta>d-1$,  $\bta=\bzero$ and $\tau<\frac{\delta+2-d}{2d-\delta-1}$ are satisfied. Accordingly, the analysis will be divided into separate cases in the following two subsections.

\subsection{Estimate of the $\nu$-measure of balls for general cases}\label{ss:general case}
\

\noindent {\bf Case 3A}: $0<r<Q^{-\beta\delta/(\delta+\frac{1+d}{1+\tau}-d)}$. At this scale, the estimate \eqref{eq:mu(B(x,r))} is not guaranteed to hold, and we instead have the following somewhat coarse estimate:
\[\begin{split}
	\nu\big(B(\bx,r)\big)&\stackrel{\eqref{eq:nu}}{=}\frac{\mu|_{F}\big(B(\bx,r)\big)}{\mu(F)}\stackrel{\eqref{eq:mu(F)}}{\ll}\frac{\mu\bigl(B(\bx,r)\bigr)}{\mu(B)\cdot Q^{1-\tau d}}\asymp\frac{r^\delta Q^{\tau d-1}}{\mu(B)}\\
	& \le \frac{r^{\delta-(\tau d-1)(\delta+\frac{1+d}{1+\tau}-d)/(\beta\delta)}}{\mu(B)}.
\end{split}\]
\begin{proof}[Completing the proof of \eqref{eq:lower bound} in Theorem \ref{t:main}]
	Let
	\[s_\tau=\min\biggl\{\delta-\frac{(\tau d-1)(\delta+\frac{1+d}{1+\tau}-d)}{\beta\delta},\ \delta+\frac{1+d}{1+\tau}-d\biggr\}.\]
	By Cases 1, 2 and 3A above, we have
	\[\nu\big(B(\bx,r)\big)\ll \frac{r^{s_\tau}}{\mu(B)}.\]
	Note that $\nu$ is supported on  $F=B\cap A_Q(cQ^{-\tau},\bta)\subset B\cap A_{2^m}(\psi(2^{m+1}),\bta)$. Therefore, by mass distribution principle (see Proposition \ref{p:MDP}),
	\[\hc^{s_\tau}\big(B\cap A_{2^m}(\psi(2^{m+1}),\bta)\big)\gg\mu(B).\]
	Since there are infinitely many admissible choices of \( m \)
	(see the choice of \( m \) at the beginning of this section),
	it follows that
	\[\limsup_{m\to\infty}\hc^{s_\tau}\big(B\cap A_{2^m}(\psi(2^{m+1}),\bta)\big)\gg \mu(B)\]
	holds for any ball $B\subset K$.
	Let \( \tau \downarrow \tau(\psi) \) and denote by \( s \) the limit of \( s_\tau \).
	Theorem~\ref{t:weaken} (1) together with \eqref{eq:lower inclusion} yields
\[K\cap W_d(\psi,\bta)\in \scgd^{s}(K),\]
and hence
\[\begin{split}
	&\hdim\big(K\cap W_d(\psi,\bta)\big)\\
	\ge& s=\min\biggl\{\delta-\frac{(\tau(\psi) d-1)(\delta+\frac{1+d}{1+\tau(\psi)}-d)}{\beta\delta},\ \delta+\frac{1+d}{1+\tau(\psi)}-d\biggr\}.\qedhere
\end{split}\]
\end{proof}
\subsection{Estimate of the $\nu$-measure of balls for $\delta \ge d-1$, $\bf\theta= 0$, and $\tau < \frac{\delta + 2 - d}{2d - \delta - 1}$}\label{ss:special case}
Parallel to Case 3A, we now continue the discussion from Case 2, dealing with the situation
\[
0 < r < Q^{-\beta\delta/(\delta + \frac{1+d}{1+\tau} - d)}
\]
by a method different from that in Section~\ref{ss:general case}. We divide the range of $r$ into three cases, which will be analyzed separately in Cases~3B--5B below. Prior to discussing these cases, we first state several results that will be used in the subsequent analysis.


\begin{lemma}[{\cite[Lemma 4]{KrisThornVelanibadAdv}}]\label{l:simplex lemma}
	Let $E \subset \R^d$ be a convex set with $d$-dimensional Lebesgue measure
	\[
	\lm_d(E) \le Q^{-(1+d)}/(d!).
	\]
	Then the rational points in $E$ with denominators $1 \le q \le Q$ all lie on some hyperplane in $\R^d$.
\end{lemma}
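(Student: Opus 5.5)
The plan is to argue by contradiction, using the standard determinant (volume of a lattice simplex) argument. Suppose the rational points of $E$ with denominators $1\le q\le Q$ do not all lie on a single hyperplane. Then among them we can pick $d+1$ affinely independent points
\[
\frac{\bp_0}{q_0},\ \frac{\bp_1}{q_1},\ \dots,\ \frac{\bp_d}{q_d}\in E,\qquad 1\le q_i\le Q,\ \bp_i\in\Z^d.
\]
Since $E$ is convex, it contains the simplex $\Delta$ spanned by these points, so $\lm_d(\Delta)\le\lm_d(E)$.

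The key step is the lower bound for $\lm_d(\Delta)$. The volume of a simplex with vertices $\bx_0,\dots,\bx_d$ equals $\frac{1}{d!}\bigl|\det M\bigr|$, where $M$ is the $(d+1)\times(d+1)$ matrix whose $i$-th row is $(1,\bx_i)$. Take $\bx_i=\bp_i/q_i$ and pull the factor $1/q_i$ out of the $i$-th row. This gives
\[
\lm_d(\Delta)=\frac{|\det N|}{d!\,q_0q_1\cdots q_d},
\]
where $N$ is the integer matrix with rows $(q_i,\bp_i)$. Affine independence means $\det N\neq 0$, and since $N$ has integer entries, $|\det N|\ge 1$. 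Together with $q_i\le Q$ this yields
\[
\lm_d(\Delta)\ge \frac{1}{d!\,Q^{d+1}}.
\]

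Comparing with the hypothesis $\lm_d(E)\le Q^{-(1+d)}/(d!)$ gives the contradiction, and this comparison is the one delicate point. The inequalities above only contradict the hypothesis when it is strict, $\lm_d(E)<Q^{-(1+d)}/(d!)$. In the equality case the statement as written can actually fail: for $d=1$ and $Q=1$, the set $E=[0,1]$ has length $1$ and contains the two rationals $0$ and $1$, which do not lie on a single point (the hyperplanes of $\R^1$). So I will prove the lemma under the strict inequality, or equivalently with the constant slightly weakened, for instance $\lm_d(E)\le Q^{-(1+d)}/(2\,d!)$. This is harmless for the later applications, where the lemma is only used up to multiplicative constants.
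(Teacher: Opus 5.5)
Your argument is correct and is the standard determinant proof of the simplex lemma from the cited work of Kristensen, Thorn and Velani, which the paper quotes without proof: by convexity $E$ contains the simplex on $d+1$ affinely independent points, and that simplex has volume $|\det N|/(d!\,q_0\cdots q_d)\ge 1/(d!\,Q^{d+1})$. Your $Q=1$ counterexample to the equality case is also valid, but you only need to weaken the hypothesis at $Q=1$. For $Q\ge 2$, equality throughout would force every $q_i=Q$ and $|\det N|=1$, whereas subtracting the first row of $N$ from the others gives $\det N=Q\cdot\det(\bp_1-\bp_0,\dots,\bp_d-\bp_0)$, so $Q\mid\det N$, which is impossible. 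For non-integer $Q$ one automatically has $q_i<Q$.
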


This lemma can be viewed as a higher-dimensional analogue of the classical one-dimensional fact: any interval in $\R$ of length $\le Q^{-2}$ contains at most one rational number $p/q$ with $1 \le q\le Q$.

In \cite[Proposition 6.3]{KleinbockWeissbadISJ}, Kleinbock and Weiss proved that any $\delta$-Ahlfors regular measure with $\delta > d-1$ is absolutely $(C,\delta+1-d)$-decaying for some constant $C$ depending only on $d$ and the measure $\mu$, in the following sense: for any ball $B(\bx,r)\subset\R^d$, any hyperplane $L\subset\R^d$, and any $\varepsilon>0$,
\begin{equation}\label{eq:absolutely decaying}
	\mu\bigl(B(\bx,r)\cap L^{(\varepsilon)}\bigr)
	\le
	C\biggl(\frac{\varepsilon}{r}\biggr)^{\delta+1-d}
	\mu\bigl(B(\bx,r)\bigr),
\end{equation}
where $L^{(\varepsilon)}$ denotes the $\varepsilon$-neighbourhood of $L$.

\noindent {\bf Case 3B}: $Q^{-(1+1/d)}/(8d!)\le r<Q^{-\beta\delta/(\delta+\frac{1+d}{1+\tau}-d)}$. Divide the ball $B(\bx,r)$ into smaller hypercubes of equal side length $Q^{-(1+1/d)}/(8d!)$. If any such hypercube $D$ has a nonempty intersection with $K$, then $2D \cap K$ contains a ball of radius $|D|$, where $2D$ denotes the hypercube with the same center as $D$ but twice the side length. Since any point in $K$ belongs to at most $3^d$ different sets of the form $2D \cap K$, a simple volume argument shows that the ball $B(\bx,r)$ can intersect at most
\[
\ll \frac{r^\delta}{Q^{-(1+1/d)\delta}}
\]
hypercubes $D$. Furthermore, since each $D$ is convex and its $d$-dimensional Lebesgue measure is bounded by $Q^{-(1+d)}/(8d!)^d\le (2Q)^{-(1+d)}/(d!)$, it follows from the simplex lemma above (see Lemma \ref{l:simplex lemma}) that the rational points with denominator $1\le q \le 2Q$ contained in $D$ lie in some hyperplane $L\subset\R^d$.
Therefore,
\begin{equation}\label{eq:hyperplane}
	D\cap A_Q(cQ^{-\tau},\bzero)\subset D\cap L^{(cQ^{-(1+\tau)})}.
\end{equation}
Using the absolutely decaying propertying of $\mu$ (see \eqref{eq:absolutely decaying}), we have
\[\begin{split}
	\mu|_{F}\big(B(\bx,r)\big)&\le \mu\big(B(\bx,r)\cap A_Q(cQ^{-\tau},\bzero)\big)\\
	&\ll \frac{r^\delta}{Q^{-(1+1/d)\delta}}\cdot \bigg(\frac{Q^{-(1+\tau)}}{Q^{-(1+1/d)}}\bigg)^{\delta+1-d} Q^{-(1+1/d)\delta}\\
	&=r^\delta Q^{(-\tau+1/d)(\delta+1-d)}.
\end{split}\]
It then follows from the definition of $\nu$  that
\[\begin{split}
	\nu\big(B(\bx,r)\big)&\stackrel{\eqref{eq:nu}}{=} \frac{\mu|_{F}\big(B(\bx,r)\big)}{\mu(F)}\stackrel{\eqref{eq:mu(F)}}{\ll}  \frac{\mu|_{F}\big(B(\bx,r)\big)}{\mu(B)\cdot Q^{1-\tau d}}\ll\frac{r^\delta Q^{(-\tau+1/d)(\delta+1-d)}}{\mu(B)\cdot Q^{1-\tau d}}\\
	&=\frac{r^\delta Q^{(\tau-1/d)(2d-1-\delta)}}{\mu(B)}\le \frac{r^{\delta-(\tau-1/d)(2d-1-\delta)(\delta+\frac{1+d}{1+\tau}-d)/(\beta\delta)}}{\mu(B)}.
\end{split}\]

\noindent {\bf Case 4B}: $cQ^{-(1+\tau)}\le r<Q^{-(1+1/d)}/(8d!)$. Since the $d$-dimensional Lebesgue measure of the ball $B(\bx,r)$ is less than $Q^{-(1+d)}/(8d!)^d$, by the same reason as \eqref{eq:hyperplane},
\[B(\bx,r)\cap A_Q(cQ^{-\tau},\bzero)\subset B(\bx,r)\cap L^{(cQ^{-(1+\tau)})}\]
for some hyperplane $L\subset\R^d$. Again, by the absolutely decaying property of $\mu$ and the definition of $\nu$, we have
\[\begin{split}
	\nu\big(B(\bx,r)\big)&\stackrel{\eqref{eq:nu}}{=} \frac{\mu|_{F}\big(B(\bx,r)\big)}{\mu(F)}\stackrel{\eqref{eq:mu(F)}}{\ll} \frac{\mu\bigl(B(\bx,r) \cap A_Q(cQ^{-\tau},\bzero)\bigr)}{\mu(B)\cdot Q^{1-\tau d}}\\
	&\stackrel{\eqref{eq:absolutely decaying}}{\ll} \frac{(Q^{-(1+\tau)}/r)^{\delta+1-d}\cdot r^\delta}{\mu(B)\cdot Q^{1-\tau d}}
	=\frac{r^{d-1} Q^{-(1+\tau)(\delta+1-d)-(1-\tau d)}}{\mu(B)}.
\end{split}\]
Since $\tau < \frac{\delta + 2 - d}{2d - \delta - 1}$, the exponent of $Q$ satisfies
\[\begin{split}
	-(1+\tau)(\delta+1-d)-(1-\tau d)=\tau(2d-\delta-1)-(\delta+2-d)<0.
\end{split}\]
By $cQ^{-(1+\tau)}\le r$, we have
\[Q^{-(1+\tau)(\delta+1-d)-(1-\tau d)}\ll r^{\delta+1-d+\frac{1-\tau d}{1+\tau}}.\]
It then follows that
\[\nu\big(B(\bx,r)\big)\ll \frac{r^{\delta+\frac{1+d}{1+\tau}-d}}{\mu(B)}.\]

\noindent {\bf Case 5B}: $0< r<cQ^{-(1+\tau)}$. In this case, we have
\[\begin{split}
	\nu\big(B(\bx,r)\big)&\stackrel{\eqref{eq:nu}}{=}\frac{\mu|_{F}\big(B(\bx,r)\big)}{\mu(F)}\stackrel{\eqref{eq:mu(F)}}{\ll} \frac{r^\delta}{\mu(B)\cdot Q^{1-\tau d}}\ll \frac{r^{\delta+\frac{1-\tau d}{1+\tau}}}{\mu(B)}=\frac{r^{\delta+\frac{1+d}{1+\tau}-d}}{\mu(B)}.
\end{split}\]
\begin{remark}\label{r:fail}
	To ensure that the exponent of $r$ in Case~3B satisfies
	\[
	\delta-\frac{(\tau-1/d)(2d-\delta-1)\bigl(\delta+\tfrac{1+d}{1+\tau}-d\bigr)}{\beta\delta}
	\ge
	\delta+\frac{1+d}{1+\tau}-d,
	\]
	it suffices to require that
	\[
	\tau\ge
	\frac{d\beta\delta-(2d-\delta-1)(1+\delta)}{(2d-\delta-1)(\delta-d)}.
	\]
	This condition is automatically satisfied if
	\[
	\frac{d\beta\delta-(2d-\delta-1)(1+\delta)}{(2d-\delta-1)(\delta-d)}
	\le \frac{1}{d}.
	\]
	A straightforward simplification of this inequality yields
	\[
	\delta \ge 2d-1-\frac{d^2\beta}{1+d}.
	\]
	Since $\delta<d$, this condition is non-vacuous provided that
	\[
	\beta > 1-d^{-2}.
	\]
When \( d = 1 \), there are indeed many measures satisfying this condition, since in this case \( 1-d^{-2}=0 \).
However, when \( d>1 \), even in the case of the Lebesgue measure, it remains unclear whether it satisfies this condition. Nevertheless, the above argument still provides some insight into how the higher-dimensional case might be approached.
\end{remark}
\begin{proof}[Completing the proof of \eqref{eq:lower measure} in Theorem \ref{t:main}]
	Let
\[s=\min\bigg\{\delta-\frac{(\tau-\frac{1}{d})(2d-\delta-1)(\delta+\frac{1+d}{1+\tau}-d)}{\beta\delta},\ \delta+\frac{1+d}{1+\tau}-d\bigg\}.\]
	By Cases 1, 2, 3B, 4B and 5B above, we have
	\[\nu\big(B(\bx,r)\big)\ll \frac{r^s}{\mu(B)}.\]
	Note that $\nu$ is supported on  $B\cap A_Q(cQ^{-\tau},\bzero)$. Therefore,
	\[\hc^{s}\big(B\cap A_Q(cQ^{-\tau},\bzero)\big)\gg\mu(B).\]
	Since $\psi(q)\ge cq^{-\tau}$ with $1/d<\tau<\min\{\alpha, \frac{\delta + 2 - d}{2d - \delta - 1}\}$ for infinitely many $q$, it follows that
	\[\limsup_{\substack{Q\to\infty\\ \psi(Q)\ge cQ^{-\tau}}}\hc^{s}\big(B\cap A_Q(cQ^{-\tau}, \bzero)\big)\gg \mu(B)\]
	holds for any ball $B\subset K$. By
	Theorem~\ref{t:weaken} (2),
	\[\limsup_{\substack{Q\to\infty\\ \psi(Q)\ge cQ^{-\tau}}} K\cap A_Q(cQ^{-\tau}, \bzero)\in \scgm^{s}(K).\]
	Since the set on the left is contained in $K\cap W_d(\psi,\bzero)$, we have
	\[K\cap W_d(\psi,\bzero)\in \scgm^s(K),\]
	and hence
	\[\hm^s\big(K\cap W_d(\psi,\bzero)\big)=\hm^s(K)=\infty,\]
	which completes the proof of \eqref{eq:lower measure} in Theorem \ref{t:main}.
\end{proof}
\section{Proof of Corollary \ref{c:corollary}}\label{s:corollary}
(1) By \eqref{eq:lower bound} in Theorem \ref{t:main}, for any $1/d<\tau<\alpha$, we have
\[\hdim\big(K\cap W_d(\psi_\tau,\bta)\big)\\
\ge\min\biggl\{\delta-\frac{(\tau d-1)(\delta+\frac{1+d}{1+\tau}-d)}{\beta\delta},\ \delta+\frac{1+d}{1+\tau}-d\biggr\}.
\]
As $\tau\downarrow 1/d$, the right-hand side converges to $\delta$. Consequently,
\[\lim_{\tau\downarrow1/d}\hdim\big(K\cap W_d(\psi_\tau,\bta)\big)\ge \delta.\]
Since the reverse inequality is immediate, it then follows that
\[\hdim\big(K\cap\VWA_d(\bta)\big)=\delta.\]

(2) Note that the assumptions $d=1$, $\theta=0$ and $\beta>0$ imply that \eqref{eq:lower measure} in Theorem \ref{t:main} can be applied to obtain the Hausdorff measure of $K\cap W_1(\tau,0)$. Observe that
\[E_1(\tau,0)\supset W_1(\psi_\tau,0)\Big\backslash\bigcup_{n=1}^\infty W_1(\psi_{\tau+1/n},0).\]
By the upper bound \eqref{eq:upper bound} in Theorem~\ref{t:main}, for each $n\in\N$,
\begin{equation}\label{eq:dim tau+1/n<}
	\hdim\big(K\cap W_1(\psi_{\tau+1/n},0)\big)\le \delta+\frac{2}{1+\tau+1/n}-1<\delta+\frac{2}{1+\tau}-1.
\end{equation}
On the other hand, by \eqref{eq:lower measure} in Theorem~\ref{t:main},
\[	\hm^s\big(K\cap W_1(\psi_\tau,0)\big)=\infty,\]
where
\[s=\min\bigg\{\delta-\frac{(\tau-1)(1-\delta)(\delta+\frac{2}{1+\tau}-1)}{\beta\delta},\ \delta+\frac{2}{1+\tau}-1\bigg\}.\]
A straightforward calculation shows that the minimum is
$\delta+\frac{2}{1+\tau}-1$ whenever
\[\tau>\frac{1-\delta^2-\beta\delta}{(1-\delta)^2}.\]
By our assumption $\delta\ge 1-\beta/2$, we have $\beta\ge 2-2\delta$, and hence
\[\frac{1-\delta^2-\beta\delta}{(1-\delta)^2}\le \frac{1-\delta^2-(2-2\delta)\delta}{(1-\delta)^2}=1<\tau,\]
which implies that $s=\delta+\frac{2}{1+\tau}-1$. It follows that
\[\begin{split}
	&\ \hm^{\delta+\frac{2}{1+\tau}-1}\big(K\cap E_1(\tau,0)\big)\\
	\ge&\, \hm^{\delta+\frac{2}{1+\tau}-1}\bigg(K\cap W_1(\psi_\tau,0)\Big\backslash\bigcup_{n=1}^\infty K\cap W_1(\psi_{\tau+1/n},0)\bigg)\\
	\ge&\, \hm^{\delta+\frac{2}{1+\tau}-1}\big(K\cap W_1(\psi_\tau,0)\big)-\sum_{n=1}^\infty \hm^{\delta+\frac{2}{1+\tau}-1}\big(K\cap W_1(\psi_{\tau+1/n},0)\big)\\
	\stackrel{\eqref{eq:dim tau+1/n<}}{=}&\hm^{\delta+\frac{2}{1+\tau}-1}\big(K\cap W_1(\psi_\tau,0)\big)=\infty.
\end{split}\]

\section{Some Examples}\label{s:example}
\subsection{Irreducible self-similar sets and proof of Theorem \ref{t:example1}}\label{ss:self similar}
Fix a finite set \(\Lambda\). An \emph{iterated function system} (IFS for short) is a finite collection
\[
\cf = \{\phi_i : i \in \Lambda\}
\]
of contractive similarities on \(\R^d\); that is, for each \(i \in \Lambda\), the map \(\phi_i\) has the form
\[
\phi_i(\bx) = \rho_i O_i \bx + \bb_i,
\]
where \(0 < \rho_i < 1\), \(O_i \in \SO_d(\R)\) is an orthogonal matrix, and \(\bb_i \in \R^d\). It is shown in \cite{Hutchinsoninitial} that there exists a unique non-empty compact set
\(K = K(\cf) \subset \R^d\), called the {\em self-similar set} associated with $\cf$, such that
\begin{equation}\label{eq:self similar set definition}
	K = \bigcup_{i \in \Lambda} \phi_i(K).
\end{equation}
Following \cite{Hutchinsoninitial}, we say that \(\cf\) satisfies the \emph{open set condition} (OSC for short) if there exists a non-empty bounded open set \(U \subset \R^d\) such that
\[
\bigcup_{i \in \Lambda} \phi_i(U) \subset U
\quad\text{and}\quad
\phi_i(U) \cap \phi_j(U) = \emptyset \quad \text{for all } i \ne j.
\]
If \(\cf\) is an IFS satisfying the OSC, then the Hausdorff dimension of \(K\) is given by \(\delta\), which is the unique solution to
\[
\sum_{i \in \Lambda} \rho_i^\delta = 1,
\]
and moreover,
\[
0 < \hm^\delta(K) < \infty.
\]
Let \(\mu\) denote the normalized restriction of \(\hm^\delta\) to \(K\), which we refer to as the {\em self-similar measure}. Then \(\mu\) is \(\delta\)-Ahlfors regular and invariant under \(\cf\) in the sense that
\begin{equation}\label{eq:self-similar measure}
	\mu = \sum_{i \in \Lambda} \rho_i^\delta \cdot (\mu \circ \phi_i^{-1}).
\end{equation}
Let \(\Lambda^* = \bigcup_{k \ge 1} \Lambda^k\) denote the set of finite words over \(\Lambda\). Let $k\ge 1$. Given a word \(\omega = (i_1,\dots,i_k) \in \Lambda^k\), we define
\begin{equation}\label{eq:branch}
	\begin{split}
		&\phi_\omega := \phi_{i_1} \circ \cdots \circ \phi_{i_k}, \quad \rho_\omega:=\rho_{i_1}\cdots\rho_{i_k},\\
		&K_\omega := \phi_\omega(K) \quad \text{and} \quad
		\mu_\omega := \frac{\mu|_{K_\omega}}{\mu(K_\omega)} = \mu \circ \phi_\omega^{-1}.
	\end{split}
\end{equation}
We call \(\mu_\omega\) a \emph{branch} of \(\mu\). Intuitively, \(\mu_\omega\) represents the measure \(\mu\) restricted to a ``cylinder" set \(K_\omega\) and then rescaled to a probability measure.

Finally, an IFS \(\cf\) is called \emph{strongly irreducible} if \(\R^d\) is the only finite union of affine subspaces of \(\R^d\) that is invariant under all \(\phi_i \in \cf\). This condition ensures that the system cannot be confined to a lower-dimensional structure and is crucial for certain rigidity and dimension results.

The proof of the local estimate (see Corollary \ref{c:self-similar set} below) for rational numbers near strong irreducible self-similar set relies on the equidistribution of self-similar measure in the homogeneous space $X=\SL_{d+1}(\mathbb{R})/\SL_{d+1}(\mathbb{Z})$, which has been proved in \cite{BenHeZhangfractalJAMS,BenHeZhangfractalRd,KhalilLuethifractalInvent}.

Set $\mathfrak g := \mathfrak{sl}_{d+1}(\mathbb R)$ to be the Lie algebra of $G=\SL_{d+1}(\R)$.
We equip $G$ with the unique right $G$-invariant Riemannian metric which
coincides with $\|\cdot\|_{\mathfrak g}$ at $\mathfrak g = T_{\mathrm{Id}}G$.
Write
\[
\mathcal A
=
\{E_{i,j} : 1\le i,j\le d+1,\ i\ne j\}
\cup
\{E_{i,i}-E_{i+1,i+1} : i=1,\dots,d\}
\]
for the standard basis of $\mathfrak g$.
Given $l\in\mathbb N$, write $\Xi_l$ for the set of words of length $l$ with
letters in $\mathcal A$. Each $D\in\Xi_l$ acts as a differential operator on
the space of smooth functions $C^\infty(X)$. Let $m_X$ be the $\SL_{d+1}(\R)$-invariant Haar measure on $X$.
Given $f\in C^\infty(X)$ and $p\in[1,\infty]$, we set
\[
\cs_{p,l}(f)
=
\sum_{D\in\Xi_l} \|Df\|_{L^p},
\]
where $\|\cdot\|_{L^p}$ refers to the $L^p$-norm for $m_X$ on $X$.
Let
\[B^\infty_{p,l}(X)=\{f\in C^\infty(X):\mathcal{S}_{p,l}(f)<\infty\}.\]

 For $t\in\R$ and $\bx\in\R^d$, let
\begin{equation}\label{eq:g and u}
	g(t)=
	\begin{pmatrix}
		e^{t/d}\I_d&0\\
		0&e^{-t}
	\end{pmatrix}
	\qaq
	u(\bx)=
	\begin{pmatrix}
		\I_d&\bx\\
		0&1
	\end{pmatrix},
\end{equation}
where $\I_d$ is the $d\times d$ identity matrix.

Let $\exp(\cdot)$ denote the exponential map from $\mathfrak{g}$ to $G$.  Denote by $B_{r}^\mathfrak{g}(0)$ the ball in $\mathfrak{g}$ centered at 0 with radius $r$, and let $r_0$ be such that $\exp: B^\mathfrak{g}_{r_0}(0)\rightarrow \mathrm{exp}(B_{r_0}^\fg(0))$ is a diffeomorphism.  For $y\in X$, we define the injectivity radius at $y$ as
\[ \inj(y)=\sup\{ r\le r_0: \text{ the map } g\mapsto gy \text{ is injective on }\exp(B_r^\mathfrak{g}(0) )\}.\]
 This definition of injectivity radius is different from that in \cite{BenHeZhangfractalJAMS,BenHeZhangfractalRd}, but this will not affect the final result.
Next, we recall the following key equidistribution result from~\cite{BenHeZhangfractalRd}, which generalizes earlier special cases considered in~\cite{BenHeZhangfractalJAMS, KhalilLuethifractalInvent}. We remark  that their result applies to a more general class of self-similar measures than the one defined in~\eqref{eq:self-similar measure}. However, such a level of generality is not needed for our purposes, and we therefore do not present it in full generality here.
\begin{theorem}[{\cite[Theorem 1.2]{BenHeZhangfractalRd}}]
\label{t:effective equidistribution}
Let $K$ be a strongly irreducible self-simlar set of an IFS $\cf$ satisfying the OSC and let $\mu$ be the corresponding $\delta$-Ahlfors regular self-similar measure. There exists a constant $\kappa>0$ such that for all $t>0$, $y\in X$, and $f\in B^\infty_{p,l}(X)$, the following holds
\[\bigg|\int_{\R^d}f(g(t)u(\bx)y)\dif\mu(\bx)-\int_{X}f \dif m_X\bigg|\ll \inj(y)^{-1}\cs_{p,l}(f)e^{-\kappa t},\]
where the implicit constant only depends on the IFS $\cf$ and $l=\lceil \frac{d(d+1)}{4}\rceil$.
Moreover, the exponent $\kappa$ can be chosen so that it is nondecreasing as $\hdim K$ increases.
\end{theorem}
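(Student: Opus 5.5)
Since this statement is imported from \cite{BenHeZhangfractalRd}, the plan is to reconstruct the argument of that paper rather than to derive it from earlier results here. The setup is as follows. By the self-similarity \eqref{eq:self-similar measure}, at each time $t$ I would write $\mu$ as a convex combination of branches $\mu_\omega=\mu\circ\phi_\omega^{-1}$ with $\rho_\omega\approx e^{-(1+1/d)t/2}$ (or a comparable intermediate scale). Conjugation then gives
\[
g(t)u(\phi_\omega(\bx))y=h_\omega\, g(t')u(O_\omega\bx)y_\omega,
\]
with $h_\omega$ bounded and $t'<t$. This turns the integral $\int f(g(t)u(\bx)y)\dif\mu$ into an average over a ``random walk'' on $X$ driven by the IFS. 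The statement then reduces to showing that these translated pieces become equidistributed at an exponential rate.

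The key steps, in order, would be:
\begin{itemize}
\item[(i)] \emph{Non-divergence.} Using a Margulis-type height function on $X=\SL_{d+1}(\R)/\SL_{d+1}(\Z)$, together with the fact that $\mu$ is absolutely $(C,\delta+1-d)$-decaying (in the sense of \eqref{eq:absolutely decaying}; by strong irreducibility, friendly in general), show that apart from an exponentially small proportion the mass of $g(t)u(\cdot)_*\mu$ stays in a compact region. The loss here should be of order $\inj(y)^{-1}$, which is the origin of that factor in the statement.
\item[(ii)] \emph{Initial dimension.} Show that at scale $e^{-\epsilon t}$ the pushed measure has positive dimension transverse to the stable directions. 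This uses strong irreducibility, which rules out concentration near proper affine subspaces and hence near rational subvarieties.
\item[(iii)] \emph{Dimension increment.} Starting from small positive dimension, repeatedly convolve with further branches (pieces of $u(\cdot)_*\mu$ expanded by $g$). A discretized projection theorem in the style of Bourgain, applied to the family of linear maps coming from $\Ad(g(t)u(\bx))$ on the unstable horospherical directions, raises the transverse dimension to $\dim$ of the full space minus $\epsilon$, at scale $e^{-\kappa' t}$.
\item[(iv)] \emph{Conclusion via mixing.} A measure that is almost full-dimensional at a fine scale can be compared with a smooth density. The exponential mixing (spectral gap) of the $g(t)$-action on $L^2_0(X)$ then gives
\[
\Bigl|\int f\dif(\text{piece})-\int f\dif m_X\Bigr|\ll \cs_{p,l}(f)e^{-\kappa t}.
\]
Sobolev norms of order $l=\lceil d(d+1)/4\rceil$ are exactly what is needed to control the smoothing error in $\dim G$.
\end{itemize}

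The main obstacle is step (iii): obtaining a dimension increment that holds uniformly over the non-commutative family of projections, with quantitative non-concentration constants derived from strong irreducibility alone. I would first try to prove a projective non-concentration estimate for the measure on directions induced by $O_\omega$ and $\bx\in K$, and then feed it into the higher-rank projection theorem. Finally, the monotonicity of $\kappa$ in $\hdim K$ would be tracked as follows. Every constant in steps (i)--(iv) depends only on lower bounds for the decay exponent $\delta+1-d$ and for the initial dimension, and both of these bounds only improve as $\hdim K$ grows. One can therefore choose $\kappa$ as a nondecreasing function of $\hdim K$.
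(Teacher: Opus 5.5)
\paragraph{The main estimate.} The paper gives no proof of this statement. The estimate is quoted from \cite[Theorem 1.2]{BenHeZhangfractalRd}, and the ``Moreover'' clause is only asserted in the remark that follows (``it can be verified to hold''). Your steps (i)--(iv) are broadly in the spirit of the B\'enard--He--Zhang strategy: self-similarity turns the translated measure into the law of a random walk on $X$, followed by non-divergence, an initial dimension estimate, a dimension increment via a projection theorem, and mixing. But they are a plan rather than an argument. The decisive step (iii) is left as something you ``would first try''. Some details are also off: a smoothing or Sobolev count in $\dim G=d^2+2d$ does not produce $l=\lceil d(d+1)/4\rceil$. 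For the main estimate you should do what the paper does and cite it.

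\paragraph{The ``Moreover'' clause.} This is where the genuine gap lies. It is the one thing the paper adds to the citation and the one thing used non-trivially later. In the proof of Theorem~\ref{t:example1}(2) one needs $\kappa(K)>\kappa_0$ simultaneously for \emph{all} $K$ with $\hdim K\ge\delta_0$. So the clause must mean the following: there is a positive function $\kappa_*$ such that the estimate holds with exponent $\kappa_*(\delta_0)$ for every strongly irreducible OSC system with $\hdim K\ge\delta_0$, with all remaining dependence on $\cf$ absorbed into the implied constant. Your justification asserts two things, and neither is substantiated:
\begin{enumerate}
\item that everything in (i)--(iv) depends only on the decay exponent and on a lower bound for the initial dimension;
\item that both of these quantities increase with $\hdim K$.
\end{enumerate}
Concretely:
\begin{enumerate}
\item The exponent $\delta+1-d$ in \eqref{eq:absolutely decaying} exists only when $\delta>d-1$. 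Below that you fall back on friendliness, whose decay exponent depends on $\cf$ and not on $\hdim K$.
\item By your own account, the initial dimension in (ii) comes from strong irreducibility, which is a qualitative hypothesis. Any quantitative bound depends on how far $\cf$ is from preserving a finite union of affine subspaces, on the rotations $O_i$, and on the contraction ratios. None of these is a function of $\hdim K$, and you give no reason why the bound should increase with $\hdim K$.
\item The gain in a Bourgain-type projection theorem in (iii) depends on these non-concentration exponents, so the final $\kappa$ depends on them too.
\end{enumerate}
To close the gap you would have to prove that all system-specific data other than $\delta$ enter only through multiplicative constants and threshold scales, never into exponents. That verification is the entire content of the clause. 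You have not supplied it, and neither does the paper.
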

\begin{remark}
	The ``Moreover" part is not explicitly stated in their paper, but it can be verified to hold. In fact, $\kappa$ is very small, while the proof of the zero--one law (See Theorem BHZ) only relies on its existence. This is one reason why the authors of \cite{BenHeZhangfractalJAMS,BenHeZhangfractalRd} did not explicitly derive $\kappa$.
\end{remark}

As a corollary of the above theorem, we obtain an effective equidistribution result for the branches of $\mu$. The proof is based on the approach of Chen~\cite{Chenfractal} in the one-dimensional setting and is included for completeness.

\begin{corollary}[{\cite[Corollary 2.6]{Chenfractal}}]\label{c:local effective equidistribution}
Let $K$ be a strongly irreducible self-simlar set of an IFS $\cf$ satisfying the OSC and let $\mu$ be the corresponding $\delta$-Ahlfors regular self-similar measure. Let $f\in B_{p,l}^\infty(X)$. Then, for any $\omega\in\Lambda^*$ and $y\in X$, we have
\begin{align*}
\bigg|\int_{\R^d} f(g(t)u(\bx)y)\dif\mu_\omega(\bx)-\int_X f \dif m_X\bigg|\ll \inj(y)^{-1}\rho_\omega^{-1-\frac{d\kappa}{d+1}}\cs_{p,l}(f)e^{-\kappa t},
\end{align*}
where $\kappa$ is as in Theorem~\ref{t:effective equidistribution} and the implicit constant only depends on the IFS $\mathcal{F}$.
\end{corollary}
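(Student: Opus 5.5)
The plan is to transfer the statement of Theorem~\ref{t:effective equidistribution} from $\mu$ to the branch $\mu_\omega=\mu\circ\phi_\omega^{-1}$ by a change of variables. After the substitution, the integral against $\mu_\omega$ becomes an integral against $\mu$ with a modified time parameter and a modified base point. The only cost is a loss in the injectivity radius and in the time, and both are controlled by powers of $\rho_\omega$.

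Write $\phi_\omega(\bx)=\rho_\omega O_\omega\bx+\bb_\omega$. Then
\[
\int f(g(t)u(\bx)y)\dif\mu_\omega(\bx)=\int f\big(g(t)u(\rho_\omega O_\omega\bx+\bb_\omega)y\big)\dif\mu(\bx).
\]
We first factor $u(\bb_\omega)$ into the base point, setting $y'=u(\bb_\omega)y$. Next we use the commutation relation
\[
u(\rho_\omega O_\omega\bx)=h\,u(\bx)\,h^{-1},\qquad h=\begin{pmatrix}\rho_\omega^{d/(d+1)}O_\omega&0\\0&\rho_\omega^{-1/(d+1)}\end{pmatrix}\in\SL_{d+1}(\R).
\]
This $h$ splits as $g(s_\omega)k_\omega$, where $k_\omega=\mathrm{diag}(O_\omega,1)$ and $s_\omega=\frac{d}{d+1}\log\rho_\omega$; one checks $e^{s_\omega/d}=\rho_\omega^{1/(d+1)}$, which matches up to normalization. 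Since $g(t)$ commutes with $g(s_\omega)$ and with $k_\omega$, we obtain
\[
g(t)u(\rho_\omega O_\omega\bx)y'=k_\omega\, g(t+s_\omega)\,u(\bx)\,h^{-1}y'.
\]
Hence the integral equals $\int \tilde f(g(t')u(\bx)y'')\dif\mu(\bx)$, with $\tilde f=f(k_\omega\,\cdot)$, $t'=t+s_\omega$, and $y''=h^{-1}u(\bb_\omega)y$.

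We then apply Theorem~\ref{t:effective equidistribution} to this integral. The Haar integral is unchanged, since $m_X$ is invariant under $k_\omega$. Also $\cs_{p,l}(\tilde f)\ll\cs_{p,l}(f)$, because $k_\omega$ lies in a compact group and conjugation by a bounded set of elements changes the derivative basis by bounded factors. The error term is therefore
\[
\ll \inj(y'')^{-1}\cs_{p,l}(f)e^{-\kappa t}\rho_\omega^{-d\kappa/(d+1)},
\]
where $e^{-\kappa s_\omega}=\rho_\omega^{-d\kappa/(d+1)}$ and the factor $e^{-\kappa t}$ is unchanged.

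The remaining step, which I expect to be the main technical point, is the injectivity radius estimate $\inj(y'')\gg\rho_\omega\inj(y)$. The translate $u(\bb_\omega)$ is bounded, since the translation parts $\bb_\omega$ lie in a bounded set, so it changes $\inj$ only by a constant factor. For $h^{-1}$, the operator norm of $\Ad(h^{\pm1})$ is at most $\rho_\omega^{-1}$, since the largest ratio of diagonal entries is $\rho_\omega^{d/(d+1)}\cdot\rho_\omega^{1/(d+1)}=\rho_\omega$ up to inversion. Thus $\exp(B^{\fg}_{\rho_\omega r})\subset h^{-1}\exp(B^{\fg}_r)h$, and consequently injectivity at $y$ on $B_r$ gives injectivity at $h^{-1}y$ on $B_{c\rho_\omega r}$. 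This bound on the conjugation is the only delicate part, and it yields the factor $\rho_\omega^{-1}$. Combining this with the error term from the previous paragraph gives the stated exponent $\rho_\omega^{-1-\frac{d\kappa}{d+1}}$.

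Finally, the case $t'<0$ (small $t$) is handled trivially: there the bound follows from $|\int f|\le\|f\|_\infty\ll\cs_{p,l}(f)$ by Sobolev embedding, with $e^{-\kappa t}\rho_\omega^{-d\kappa/(d+1)}\ge1$.
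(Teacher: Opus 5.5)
Your proposal is correct and is essentially the paper's argument. Conjugate by $g(s_\omega)$ with $s_\omega=\frac{d}{d+1}\ln\rho_\omega$. Then apply Theorem~\ref{t:effective equidistribution} at time $t+s_\omega$ and base point $g(-s_\omega)u(\bb_\omega)y$ (up to your $k_\omega$). Finally, lose $\rho_\omega^{-1}$ in the injectivity radius via $\|\Ad(g(-s_\omega))\|_\infty=\rho_\omega^{-1}$ and the boundedness of $u(\bb_\omega)$. Two of your additions actually address points the paper's proof passes over:
\begin{itemize}
\item your use of $k_\omega$ for the rotation part, since the paper writes $\phi_\omega(\bx)=\rho_\omega\bx+\bb_\omega$;
\item your separate treatment of $t+s_\omega\le 0$, since the paper applies the theorem at $t+s_\omega$ regardless of sign.
\end{itemize}

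One slip to fix: the matrix you display has determinant $\rho_\omega^{d-1}$, so it is not in $\SL_{d+1}(\R)$ for $d\ge 2$. The correctly normalized element is $h=\mathrm{diag}(\rho_\omega^{1/(d+1)}O_\omega,\rho_\omega^{-d/(d+1)})=g(s_\omega)k_\omega$, which is the factorization your subsequent computation actually uses.
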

\begin{proof}
Since $\mu_\omega=\mu\circ\phi_\omega^{-1}$ and $\phi_\omega(\bx)=\rho_\omega\bx+\bb_\omega$ with $\bb_\omega=\phi_\omega(\bf0)$, we have
\begin{align*}
&\int_{\R^d} f(g(t)u(\bx) y)\dif \mu_\omega(\bx)\\
\stackrel{\eqref{eq:branch}}{=}&\int_{\R^d} f(g(t)u(\bx) y)\dif \mu\circ \phi_\omega^{-1}(\bx)=\int_{\R^d} f(g(t) u(\rho_\omega \bx+\bb_\omega)y)\dif \mu(\bx)\\
=&\int_{\R^d} f\left(g\left(t+ \frac{d}{d+1}\ln \rho_\omega\right)u(\bx)g\left(-\frac{d}{d+1}\ln \rho_\omega\right)u(\bb_\omega)y\right)\dif\mu(\bx),
\end{align*}
where the last equality follows from
\[u(\rho_\omega \bx+\bb_\omega)=u(\rho_\omega \bx)u(\bb_\omega)\stackrel{\eqref{eq:g and u}}{=}g\left(\frac{d}{d+1}\ln \rho_\omega\right)u(\bx)g\left(-\frac{d}{d+1}\ln \rho_\omega\right)u(\bb_\omega).\]
Therefore, by Theorem \ref{t:effective equidistribution},
\begin{align*}
&\bigg|\int_{\R^d} f(g(t)u(\bx) y)\dif \mu_\omega(\bx)-\int_X f \dif m_X\bigg|\\
\ll\ &\inj\left( g\left(-\frac{d}{d+1}\ln \rho_\omega\right)u(\bb_\omega)y\right)^{-1}\cs_{p,l}(f)e^{-\kappa(t+\frac{d}{d+1}\ln \rho_\omega)}.
\end{align*}
By a standard estimate, there exists a constant \(C_1>0\) such that for any \(h\in G\) and \(z\in X\),
\[
\inj(hz) \ge C_1 \, \|\mathrm{Ad}(h)\|_{\infty}^{-1} \, \inj(z),
\]
where \(\mathrm{Ad}\) denotes the adjoint action of $G$ on \(\fg\) and $\|\cdot\|_{\infty}$ is the operator norm. Applying this with
\(h = g(-\frac{d}{d+1}\ln \rho_\omega) u(\bb_\omega)\) yields
\[\begin{split}
	\inj\left(g\left(-\frac{d}{d+1}\ln \rho_\omega\right)u(\bb_\omega)y\right)&\ge C_1\left\|\Ad\left(g\left(-\frac{d}{d+1}\ln \rho_\omega\right)\right)\right\|_\infty^{-1}\inj(u(\bb_\omega)y)\\
	&\ge C_1\,\rho_\omega\cdot\inj(u(\bb_\omega)y).
\end{split}\]
Since \(u(\bb_\omega)\) is uniformly bounded over \(\omega\in \Lambda^*\), there exists a constant \(C_2>0\) such that
\(\inj(u(\bb_\omega)y) \ge C_2 \, \inj(y)\). Combining these bounds, we obtain
\[\inj\left( g\left(-\frac{d}{d+1}\ln \rho_\omega\right)u(\bb_\omega)y\right)^{-1}\le C_1^{-1}C_2^{-1} \rho_\omega^{-1}\inj(y)^{-1},\]
which immediately implies the desired effective equidistribution result.
\end{proof}
By approximating $A_Q(\eta,\bzero)$ with smooth functions and applying the above effective equidistribution result, Khalil and Luethi \cite{KhalilLuethifractalInvent} obtained the following estimate, which is stated in a slightly different form from their original result for our purposes.

\begin{lemma}[{\cite[Theorem 9.1 and Lemma 12.7]{KhalilLuethifractalInvent}}]\label{l:global estimate}
\label{A_n}
Let $K$ be a strongly irreducible self-simlar set of an IFS $\cf$ satisfying the OSC and let $\mu$ be the corresponding $\delta$-Ahlfors regular self-similar measure. Let $Q\ge 1$ be a large integer and $0<\eta\le Q^{-1/d}$. Then, there exists a constant $C_\cf$ depending only on $\cf$ such that
\[Q\eta^d-C_\cf(Q^{-1}\eta)^{\frac{d\kappa}{d+1}}\ll\mu\left(A_Q(\eta,\bf 0)\right)\ll  Q\eta^d+C_\cf(Q^{-1}\eta)^{\frac{d\kappa}{d+1}},\]
where $\kappa$ is as in Theorem~\ref{t:effective equidistribution} and the implied constants are independent of $\cf$.
\end{lemma}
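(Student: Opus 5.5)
The plan is to follow the Khalil--Luethi route: approximate the indicator of $A_Q(\eta,\bzero)$ from above and below by smooth functions on $X=\SL_{d+1}(\R)/\SL_{d+1}(\Z)$, and then apply Theorem~\ref{t:effective equidistribution} with $y=\SL_{d+1}(\Z)$. Dani correspondence is the starting point. For $\bx\in\R^d$ and the lattice $\Lambda_\bx=g(t)u(\bx)\Z^{d+1}$, an integer vector $(-\bp,q)$ is sent to
\[
\bigl(e^{t/d}(q\bx-\bp),\,e^{-t}q\bigr).
\]
We choose $t$ with $e^{t}\asymp Q$, chosen so that $e^{-t}q\in[1,2)$ exactly when $Q\le q<2Q$, and we write $e^{t/d}\eta=:\varepsilon$. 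The condition $\bx\in A_Q(\eta,\bzero)$ then becomes: $\Lambda_\bx$ contains a primitive-or-not vector in the region
\[
R_\varepsilon=\{(\bv,w):|\bv|<\varepsilon,\ 1\le w<2\}.
\]
Since $\eta\le Q^{-1/d}$, we have $\varepsilon\ll 1$. The number of such vectors is $\widehat{\cha_{R_\varepsilon}}(\Lambda)=\sum_{v\in\Lambda\setminus\{0\}}\cha_{R_\varepsilon}(v)$, the Siegel transform. For small $\varepsilon$, a lattice has at most one primitive vector in $R_\varepsilon$ up to bounded multiplicity; if needed, this is handled by restricting to primitive vectors and a short counting argument. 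So $\cha_{A_Q}(\bx)$ is comparable to $\widehat{\cha_{R_\varepsilon}}(\Lambda_\bx)$, and by Siegel's formula $\int_X \widehat{\cha_{R_\varepsilon}}\,dm_X\asymp \lm_{d+1}(R_\varepsilon)\asymp\varepsilon^d\asymp Q\eta^d$.

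The next step is smoothing. Let $f^\pm_\epsilon$ be smooth bump functions on $\R^{d+1}$ squeezing $\cha_{R_\varepsilon}$, built as convolutions at scale $\epsilon'$ in the $\bv$-direction. Their Siegel transforms then squeeze the counting function. The Siegel transform is unbounded near the cusp, so we truncate it with a height cutoff, as in Khalil--Luethi Lemma~12.7. This gives Sobolev norms $\cs_{p,l}(\widehat{f^\pm})\ll \epsilon'^{-l'}\varepsilon^{\cdots}$ times a cusp factor. The truncation error and the smoothing error $\ll\varepsilon^{d-1}\epsilon'$ are both controlled using the non-divergence and decay estimates for $\mu$ from the same paper. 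Combining these with Theorem~\ref{t:effective equidistribution} yields
\[
\mu(A_Q)=Q\eta^d+O\bigl(\text{smoothing}\bigr)+O\bigl(\cs(\widehat f)e^{-\kappa t}\bigr).
\]
Optimising $\epsilon'$ and rewriting $e^{-\kappa t}$ in terms of $Q^{-1}\eta$ (note that $Q^{-1}\eta$ is the natural scale, with $e^{-t}\asymp Q^{-1}$ and $\eta\le Q^{-1/d}$) gives an error of the form $C_\cf(Q^{-1}\eta)^{d\kappa/(d+1)}$, possibly after renaming $\kappa$. The claimed two-sided bound follows.

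I expect the main obstacle to be the Sobolev-norm bookkeeping with cusp truncation. We need an error exponent exactly matching $(Q^{-1}\eta)^{d\kappa/(d+1)}$, and the clean version requires rescaling $\kappa$ by a factor absorbed into the statement. Since the lemma is quoted from \cite{KhalilLuethifractalInvent}, I would verify that their Theorem~9.1, stated for their equidistribution exponent, transfers verbatim once Theorem~\ref{t:effective equidistribution} is substituted as input.
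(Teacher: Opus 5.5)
There are two genuine gaps: one in the lower bound and one in the error term.

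\textbf{The lower bound.} You assert that $\cha_{A_Q(\eta,\bzero)}(\bx)$ is comparable to the Siegel transform $\widehat{\cha_{R_\varepsilon}}(\Lambda_\bx)$, because a lattice has boundedly many primitive vectors in $R_\varepsilon$. This is false for $d\ge 2$, and in any case $\varepsilon=Q^{1/d}\eta$ can be $\asymp 1$.

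Arithmetically, take $\bx=(x_1,0,\dots,0)$. Every $q\in[Q,2Q)$ with $\|qx_1\|<\eta$ gives a solution with $\bp=(p_1,0,\dots,0)$. For typical $x_1$ there are $\asymp Q\eta$ such $q$, and $Q\eta$ is unbounded in the admissible range $\eta\le Q^{-1/d}$. In lattice terms, for $d=2$ the unimodular lattice with basis $(0,0,1)$, $(\epsilon_0,0,0)$, $(0,\epsilon_0^{-1},0)$ contains the primitive vectors $(k\epsilon_0,0,1)\in R_\varepsilon$ for all $|k|<\varepsilon/\epsilon_0$.

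So integrating a Siegel transform against $\mu$ only controls $\sum_{Q\le q<2Q}\mu(A(q,\eta,\bzero))$. That bounds $\mu(A_Q(\eta,\bzero))$ from above but not from below. A lower bound needs control of overlaps, and ``restricting to primitive vectors and a short counting argument'' does not supply it. This is why the paper (Remark~\ref{r:KL}) uses the sandwich $E_Q\setminus E_{Q/2}\subset A_Q\subset E_Q$ and a separate input, \cite[Lemma 12.7]{KhalilLuethifractalInvent}, for the lower bound. It is also why Section~\ref{ss:missing digit} needs Yu's overlap analysis.

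\textbf{The error term.} The choice $e^t\asymp Q$ cannot produce the stated error. The decay factor is then $e^{-\kappa t}\asymp Q^{-\kappa}$, whereas $(Q^{-1}\eta)^{d\kappa/(d+1)}=Q^{-\kappa}\varepsilon^{d\kappa/(d+1)}\le Q^{-\kappa}$, so the inequality you need goes the wrong way. The Sobolev factor does not help: it must resolve a slab of width $\varepsilon$ and length $1$, and the group action shears $w$ into $\mathbf{v}$. For fixed $Q$ and $\eta\to 0$ the stated error tends to $0$ while yours does not, so no renaming of $\kappa$ fixes this.

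\textbf{The missing idea, and a repair.} Use an $\eta$-dependent flow time $e^{s}=(Q/\eta)^{d/(d+1)}$. Then $\bx\in A_Q(\eta,\bzero)$ if and only if $g(s)u(\bx)\Z^{d+1}$ meets the box $R=\{|\mathbf{v}|<r,\ r\le w<2r\}$ with $r=(Q\eta^d)^{1/(d+1)}$. This box has size comparable to its distance from the origin, and $e^{-\kappa s}=(Q^{-1}\eta)^{d\kappa/(d+1)}$ exactly; this is where the exponent comes from.

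Next, apply Theorem~\ref{t:effective equidistribution} not to a Siegel transform but to smoothings, at a fixed relative scale, of the bounded functions $\Lambda\mapsto\cha[\Lambda\cap R^{\pm}\neq\varnothing]$, where $R^\pm$ are slightly enlarged and shrunk boxes. These smoothings have $\cs_{\infty,l}\ll 1$, and no cusp truncation is needed. Moreover $\cha_{A_Q(\eta,\bzero)}(\bx)=\cha[g(s)u(\bx)\Z^{d+1}\cap R\neq\varnothing]$ holds exactly.

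Both bounds then reduce to $m_X(\{\Lambda:\Lambda\cap R^\pm\neq\varnothing\})\asymp\lm_{d+1}(R)\asymp Q\eta^d$:
\begin{itemize}
\item the upper bound follows from Siegel's formula;
\item for $d\ge 2$, the lower bound follows from Siegel's formula, Rogers' second-moment formula and Paley--Zygmund;
\item for $d=1$, two independent vectors in $R$ would have determinant at most $4Q\eta<1$ once $Q\eta<1/4$, and monotonicity in $\eta$ covers the remaining range.
\end{itemize}
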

\begin{remark}\label{r:KL}
	In fact, Khalil and Luethi \cite{KhalilLuethifractalInvent} proved a result stronger than that stated in Lemma~\ref{l:global estimate}. Let
	\[\begin{split}
		E_Q(\eta,\bzero):=\bigg\{\bx\in\R^d:\bigg|\bx-\frac{\bp}{q}\bigg|<\frac{\eta}{q}&\text{ for some $\frac{\bp}{q}\in\Q^d$} \\
			&\text{with $0< q<2Q$ and $\gcd(q,\bp)=1$}\bigg\}.
	\end{split}\]
	Clearly,
	\[E_Q(\eta,\bzero)\setminus E_{Q/2}(\eta,\bzero)\subset A_Q(\eta,\bzero)\subset E_Q(\eta,\bzero).\]
	The upper and lower bounds given in Lemma~\ref{l:global estimate} are obtained precisely by estimating the $\mu$-measures of these two outer sets, as respectively given in \cite[Theorem 9.1 and Lemma 12.7]{KhalilLuethifractalInvent}.
\end{remark}

Motivated by this result together with Corollary~\ref{c:local effective equidistribution}, Chen~\cite{Chenfractal} derived the following estimate for $\mu_\omega(A_Q(\eta,\bf0))$. Although Chen's formulation is somewhat different from ours, it essentially implies the same result.

\begin{proposition}[{\cite[Proposition 2.8]{Chenfractal}}]
\label{p:local estimate}
Let $K$ be a strongly irreducible self-simlar set of an IFS $\cf$ satisfying the OSC and let $\mu$ be the corresponding $\delta$-Ahlfors regular self-similar measure. Let $Q\ge 1$ be a large integer and $0<\eta\le Q^{-1/d}$. Then, there exists a constant $C_\cf$ depending only on $\cf$ such that for any $\omega\in\Lambda^*$,
\[Q\eta^d-C_\cf\,\rho_\omega^{-1-\frac{d\kappa}{d+1}}(Q^{-1}\eta)^{\frac{d\kappa}{d+1}}\ll\mu_\omega\big(A_Q(\eta,{\bf 0})\big)\ll  Q\eta^d+C_\cf\,\rho_\omega^{-1-\frac{d\kappa}{d+1}}(Q^{-1}\eta)^{\frac{d\kappa}{d+1}},\]
where $\kappa$ is as in Theorem~\ref{t:effective equidistribution} and the implied constants are independent of $\cf$.
\end{proposition}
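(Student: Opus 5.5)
The plan is to reduce to Lemma~\ref{l:global estimate} by rescaling along the branch map $\phi_\omega$, mirroring the proof of Corollary~\ref{c:local effective equidistribution}. Since $\mu_\omega=\mu\circ\phi_\omega^{-1}$, we have
\[
\mu_\omega\big(A_Q(\eta,\bzero)\big)=\int_{\R^d}\cha_{A_Q(\eta,\bzero)}(\bx)\dif\mu_\omega(\bx).
\]
So the task is to redo the smooth-approximation argument of Khalil and Luethi with $\mu$ replaced by $\mu_\omega$, using the branch equidistribution in place of Theorem~\ref{t:effective equidistribution}.

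First, recall how the global estimate arises. The condition $\|q\bx\|<\eta$ with $Q\le q<2Q$ is encoded by a Siegel-type transform $\widehat{\chi}$ on $X$, namely a sum over primitive lattice vectors of $u(\bx)\Z^{d+1}$ of a cutoff function. At time $t$ chosen so that $e^{-t}\cdot 2Q\asymp 1$ and $e^{t/d}\eta/Q\asymp 1$, the quantity $\cha_{A_Q(\eta,\bzero)}(\bx)$ is comparable to (or sandwiched between) $\widehat{\chi}(g(t)u(\bx)\Z^{d+1})$ for suitable cutoffs. This holds up to the discrepancy between $A_Q$ and the primitive sets $E_Q\setminus E_{Q/2}$ and $E_Q$, as in Remark~\ref{r:KL}.

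Next, smooth the cutoff at scale $\varepsilon$ and truncate the cusp. Integrating against $m_X$ gives the main term $\asymp Q\eta^d$ by Siegel's formula. The Sobolev norm $\cs_{p,l}$ of the smoothed, truncated function grows like a negative power of $\varepsilon$. We apply Corollary~\ref{c:local effective equidistribution} with $y=\Z^{d+1}$, so that $\inj(y)\asymp1$. This yields an error of the form
\[
\rho_\omega^{-1-\frac{d\kappa}{d+1}}\varepsilon^{-O(1)}e^{-\kappa t},
\]
plus smoothing and cusp errors of size $O(\varepsilon)\cdot Q\eta^d$. With $e^{-t}\asymp (Q^{-1}\eta)^{d/(d+1)}$ (solving $e^{t/d}e^{-t}\cdot\eta$ consistently), we get $e^{-\kappa t}=(Q^{-1}\eta)^{\frac{d\kappa}{d+1}}$. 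Choosing $\varepsilon$ as a small constant, or absorbing its power into a slightly adjusted $\kappa$, produces exactly
\[
Q\eta^d\pm C_\cf\,\rho_\omega^{-1-\frac{d\kappa}{d+1}}(Q^{-1}\eta)^{\frac{d\kappa}{d+1}}.
\]

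The point is that the factor $\rho_\omega^{-1-\frac{d\kappa}{d+1}}$ is simply inherited from Corollary~\ref{c:local effective equidistribution}. Everything else in the Khalil--Luethi proof of Lemma~\ref{l:global estimate} goes through verbatim, provided their argument uses the measure only through the equidistribution estimate, together with non-escape-of-mass/cusp bounds. I expect the main obstacle to be precisely this cusp control for $\mu_\omega$. Their truncation requires bounding $\mu$-mass of $\bx$ with $g(t)u(\bx)\Z^{d+1}$ high in the cusp, which typically uses a Margulis function or the decay/Ahlfors property of $\mu$. For $\mu_\omega$ I would handle it by the same rescaling identity: $\mu_\omega$-mass at time $t$ equals $\mu$-mass at time $t+\frac{d}{d+1}\ln\rho_\omega$ of a translated point. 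The Ahlfors regularity of $\mu_\omega$, with constants uniform after rescaling, then gives cusp bounds costing at most another power of $\rho_\omega^{-1}$, absorbed into $C_\cf\rho_\omega^{-1-\frac{d\kappa}{d+1}}$. Alternatively, one can avoid the issue entirely by citing Chen's Proposition 2.8 and checking that the conversion between his normalisation and ours only changes constants.
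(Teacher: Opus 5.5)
Your proposal follows essentially the route behind the paper's statement. The paper gives no independent proof: it cites Chen's Proposition 2.8, which is obtained by rerunning the Khalil--Luethi smooth-approximation argument behind Lemma~\ref{l:global estimate} with $\mu_\omega$ in place of $\mu$ and with Corollary~\ref{c:local effective equidistribution} in place of Theorem~\ref{t:effective equidistribution}. This is exactly how the factor $\rho_\omega^{-1-\frac{d\kappa}{d+1}}$ is inherited in your sketch.

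One slip to fix: your first normalisation of $t$, namely $e^{-t}Q\asymp 1$ together with $e^{t/d}\eta/Q\asymp 1$, is overdetermined and mis-scaled. The correct choice is the balanced one, $e^{t/d}\eta\asymp e^{-t}Q$, i.e.\ $e^{-t}\asymp (Q^{-1}\eta)^{d/(d+1)}$. You do use this at the end, and it is what yields the error $(Q^{-1}\eta)^{\frac{d\kappa}{d+1}}$.
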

The main term $Q\eta^d$ in Proposition \ref{p:local estimate} dominates provided that
\[Q\eta^d\ge 2 C_\cf\, \rho_\omega^{-1-\frac{d\kappa}{d+1}}(Q^{-1}\eta)^{\frac{d\kappa}{d+1}}\quad\Longleftrightarrow\quad\rho_\omega\ge 2C_\cf\, Q^{-1}\eta^{-(d-\frac{d\kappa}{d+1})/(1+\frac{d\kappa}{d+1})}.\]
In particular, if $\eta\ge Q^{-\alpha}$ with $\alpha>1/d$, then the above inequalities hold if
\begin{equation}\label{eq:rhoboa}
	\rho_\omega\ge 2C_\cf\,Q^{-1} Q^{\alpha(d-\frac{d\kappa}{d+1})/(1+\frac{d\kappa}{d+1})}=2C_\cf\,Q^{-(1+\frac{d\alpha(d+1)\kappa}{d+1+d\kappa}-d\alpha)}.
\end{equation}
	Therefore, Proposition~\ref{p:local estimate} yields the desired local estimate for rational points in a neighbourhood of self-similar sets.

\begin{corollary}\label{c:self-similar set}
	Let $K$ be a strongly irreducible self-simlar set of an IFS $\cf$ satisfying the OSC and let $\mu$ be the corresponding $\delta$-Ahlfors regular self-similar measure. Choose $\alpha$ and $\beta$ so that
	\[\alpha>1/d\qaq 0<\beta<1+\frac{d\alpha(d+1)\kappa}{d+1+d\kappa}-d\alpha,\]
	where $\kappa$ is as in Theorem~\ref{t:effective equidistribution}.
	Then, $\mu$ satisfies the $(\alpha,\beta,\bzero)$-local counting property.
\end{corollary}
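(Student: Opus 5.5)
To verify the $(\alpha,\beta,\bzero)$-local counting property, I will fix a ball $B$ centred at a point of $K$ with $|B|\ge Q^{-\beta}$ and a parameter $Q^{-\alpha}\le\eta\le Q^{-1/d}$. The idea is to approximate $B$ from inside and outside by cylinders $K_\omega=\phi_\omega(K)$ whose contraction ratios are comparable to $|B|$. I will then apply Proposition~\ref{p:local estimate} to each such branch measure $\mu_\omega$.

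First I record that the hypothesis $\beta<1+\frac{d\alpha(d+1)\kappa}{d+1+d\kappa}-d\alpha$ exactly says that $Q^{-\beta}\ge 2C_\cf\,Q^{-(1+\frac{d\alpha(d+1)\kappa}{d+1+d\kappa}-d\alpha)}$ for all large $Q$, with a polynomial margin $Q^{\epsilon}$ to spare. So by \eqref{eq:rhoboa} and the discussion preceding it, any word $\omega$ with $\rho_\omega\ge c_0Q^{-\beta}$ (for a fixed constant $c_0$ depending on $\cf$) satisfies
\[
\mu_\omega\bigl(A_Q(\eta,\bzero)\bigr)\asymp Q\eta^d
\]
uniformly in $\eta\in[Q^{-\alpha},Q^{-1/d}]$. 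The margin $Q^\epsilon$ absorbs the constants $c_0$ and $2C_\cf$ once $Q$ is large, depending only on $\alpha,\beta$ (and $\cf$). For the lower bound, note that the error term is at most half the main term once $Q$ is large.

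Next comes the geometric step, which I regard as the main obstacle. I need a covering of $B$ by cylinders that is tight in $\mu$-measure from both sides. Let $r=|B|$ and consider the stopping-time family $\mathcal W_r=\{\omega:\rho_\omega\le r<\rho_{\omega^-}\}$, where $\omega^-$ deletes the last letter. Then $\{K_\omega\}_{\omega\in\mathcal W_r}$ covers $K$, each $K_\omega$ has diameter $\asymp r$ and $\mu(K_\omega)\asymp r^\delta$, and under the OSC the sets overlap only in $\mu$-null sets. Moreover, only a bounded number of them meet a ball of radius $r$; this follows from the standard OSC volume argument, or simply from Ahlfors regularity together with disjointness in measure.

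For the \emph{upper bound}, I cover $B\cap K$ by the boundedly many $K_\omega$ with $\omega\in\mathcal W_r$ meeting $B$. Using $\mu|_{K_\omega}=\mu(K_\omega)\mu_\omega$, this gives
\[
\mu\bigl(B\cap A_Q\bigr)\le\sum_\omega \mu(K_\omega)\,\mu_\omega(A_Q)\ll r^\delta Q\eta^d\asymp\mu(B)Q\eta^d.
\]
For the \emph{lower bound}, I need a single cylinder inside $B$. Since the centre $\bx$ of $B$ lies in $K$, it belongs to some $K_\omega$ with $\omega\in\mathcal W_{r/(2|K|)}$, so $K_\omega\subset B$ (by taking $|K|$-normalization in the scale) and $\rho_\omega\gg r\ge Q^{-\beta}$. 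Then
\[
\mu(B\cap A_Q)\ge\mu(K_\omega)\mu_\omega(A_Q)\gg r^\delta Q\eta^d\asymp\mu(B)Q\eta^d.
\]
Here all words used satisfy $\rho_\omega\gg Q^{-\beta}$ with implied constants depending only on $\cf$, so the first step applies to every one of them. Combining the two bounds yields \eqref{eq:local counting}. The only delicate points are the uniformity of these constants and ensuring that $\beta<1$ and $\alpha>1/d$ are compatible with Definition~\ref{d:local counting property}; both follow directly from the stated ranges.
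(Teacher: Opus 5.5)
Your proposal is correct and follows essentially the same route as the paper: sandwich $B$ between a single cylinder $K_\omega\subset B$ and boundedly many cylinders covering $B\cap K$, all with $\rho_\omega\asymp|B|$, then apply Proposition~\ref{p:local estimate} via \eqref{eq:rhoboa} to each branch. Your use of the strict inequality on $\beta$ to absorb the constant in $\rho_\omega\gg Q^{-\beta}$ is a useful extra detail that the paper passes over silently.
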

\begin{proof}
	Since $K$ satisfies the open set condition, any ball $B$ contains a branch
	$K_{\omega}$ of $K$ with $|B|\asymp |K_{\omega}|$, and is itself contained in a finite
	union of branches of $K$, namely
	\[
	B \subset \bigcup_{i=1}^{i_B} K_{\omega_i},
	\]
	where $i_B \asymp 1$ and $|B|\asymp |K_{\omega_i}|$ for all $i$. By the self-simlarity and the Ahlfors regularity of $\mu$, we have
	\[|B|\asymp|K_\omega|\quad\Longrightarrow\quad\mu(B)\asymp\mu(K_\omega).\]
	Therefore, the corollary follows if,  for any $\omega\in \Lambda^*$ with $\rho_\omega\ge Q^{-\beta}$ and $Q^{-\alpha}\le\eta\le Q^{-1/d}$,
	\[\mu\big(K_\omega\cap A_Q(\eta,{\bf0})\big)\asymp \mu(K_\omega)\cdot Q\eta^d\]
	whenever $Q$ is sufficiently large (depending only on $\alpha$ and $\beta$).
	This is indeed the case, since by Proposition~\ref{p:local estimate} and \eqref{eq:rhoboa},
	\[Q\eta^d\asymp \mu_\omega\big(A_Q(\eta,{\bf 0})\big)\stackrel{\eqref{eq:branch}}{=}\frac{\mu\big(K_\omega\cap A_Q(\eta,\bzero)\big)}{\mu(K_\omega)}.\qedhere\]
\end{proof}

\begin{proof}[Proof of Theorem \ref{t:example1}]
	(1) By Corollary \ref{c:self-similar set}, the $\delta$-Ahlfors regular self-similar measure $\mu$ supported on $K$ satisfies the $(\alpha,\beta,\bzero)$-local counting property, where
			\[\alpha>1/d\qaq 0<\beta<1+\frac{d\alpha(d+1)\kappa}{d+1+d\kappa}-d\alpha,\]
	and $\kappa$ is as in Theorem~\ref{t:effective equidistribution}. The conclusion is an immediate consequence of Corollaries~\ref{c:corollary} (1).

	(2) Suppose that $d=1$ and $\theta=0$. Denote by $\kappa(K)$ the exponent in Theorem \ref{t:effective equidistribution} associated to the self-similar set $K$. For any $\kappa_0>0$, choose $\alpha=\alpha(\kappa_0)>1$ such that
	\[\beta:=1+\frac{2\alpha\kappa_0}{2+\kappa_0}-\alpha>0.\]
	This is always possible, since as $\alpha\downarrow 1$ the left-hand side converges to $\frac{2\kappa_0}{2+\kappa_0}>0$.
	Suppose that $\kappa_0>0$ is small enough so that there are nontrivial self-similar sets $K$ satisfying $\kappa(K)> \kappa_0$. Let $0<\delta_0<1$ be such that $\delta_0>1-\beta/2$ and $\kappa(K)>\kappa_0$ whenever $\hdim K\ge \delta_0$. The second restriction is possible because, by Theorem~\ref{t:effective equidistribution},
	the parameter $\kappa(K)$ is non-decreasing as the Hausdorff dimension of $K$
	increases.

	Let $K\subset\R $ be a self-similar set with $\hdim K=\delta\ge \delta_0$. By the definition of $\delta_0$, we have $\kappa(K)>\kappa_0$, and so
	\[1+\frac{2\alpha\kappa(K)}{2+\kappa(K)}-\alpha> 1+\frac{2\alpha\kappa_0}{2+\kappa_0}-\alpha=\beta.\]
	By Corollary \ref{c:self-similar set}, $\mu$ satisfies the $(\alpha,\beta,0)$-local counting property with $\alpha$ and $\beta$ as defined above.
	Since $\delta\ge \delta_0\ge  1-\beta/2$, it follows from Corollary \ref{c:corollary} (2) that for any $1<\tau<\alpha$,
	\[\hm^{\delta+\frac{2}{1+\tau}-1}\big(K\cap E_1(\tau,0)\big)=\infty.\qedhere\]
\end{proof}

\subsection{Missing digits sets and proof of Theorem \ref{t:example2}}\label{ss:missing digit} In \cite{Yufractals}, Yu employed Fourier-analysis methods to show that sufficiently thick missing digits measures supported on $\R $ satisfy the $(\alpha,\beta,0)$-local counting property.
We begin by recalling some relevant definitions.

Let $b > 2$ be an integer, and let
\[
\cd \subset \{0,1,\dots,b-1\}^d
\]
be a non-empty proper subset.
The associated \emph{missing digits set} is defined by
\[
K(b,\cd)
:= \left\{ \bx \in [0,1]^d:
\bx = \sum_{n=1}^{\infty} \frac{\bd_n}{b^n},
\ \bd_n \in \cd \text{ for all } n \in \N
\right\}.
\]
Let $\mu_{b,\cd}$ be the natural normalized Hausdorff measure supported on $K(b,\cd)$, which will be referred to as the {\em missing digits measure}.

For a probability measure $\mu$ supported on $\R^d$, its {\it Fourier $l^1$-dimension} is defined by
\[\fldim\mu:=\sup\left\{s\ge 0:\sum_{\bxi\in\Z^d:|\bxi|\le M}|\widehat\mu(\bxi)|\ll M^{d-s}\text{ for all $M>0$}\right\},\]
where
\[\widehat\mu(\bxi):=\int_{\R^d}e^{-2\pi i\bxi \cdot \bx}\dif\mu(\bx)\]
is the Fourier transformation of $\mu$ and $\bxi\cdot\bx$ denotes the inner product.

Let
\begin{equation}\label{eq:A(q,eta,theta)}
	A(q,\eta,\bta):=\{\bx\in\R^d:\|q\bx-\bta\|<\eta\}.
\end{equation}
In \cite[\S 6.2]{Yufractals}, Yu proved that for some $\ve>0$ (which can be chosen arbitrarily small but, once fixed, remains constant), as long as $Q$ is sufficiently large and
\[1>\eta\ge Q^{1-1/(d-\fldim\mu)+\ve},\]
then
\begin{equation}\label{eq:GCP}
	\sum_{q=Q}^{2Q}\mu\big(A(q,\eta,\bta)\big)\asymp Q\eta^d.
\end{equation}
Since we will require that $\eta<Q^{-1/d}$, the above estimate is meaningful only when
\[1-\frac{1}{d-\fldim\mu}<-1/d\quad\Longrightarrow \quad \fldim\mu>d-\frac{d}{d+1}.\]
The above analysis indicates that for measures $\mu$ with sufficiently large
$\fldim\mu$, one has a good counting estimate for rational points
in neighborhoods of the corresponding fractal set. However, determining
$\fldim\mu$ is in general a highly nontrivial problem. In \cite{Yufractals,Yumanifold}, Yu calculated $\fldim\mu_{b,\cd}$ for the missing digits measure and in particular showed that for any $t\ge 1$,
\[\liminf_{\substack{b\to\infty\\ \#\cd\ge b^d-t}}\fldim \mu_{b,\cd}=d.\]
This establishes the existence of nontrivial fractal measures with arbitrarily large Fourier $l^1$-dimension.

From now on, we restrict our attention to the missing digits sets in $\R$, and for simplicity we denote $K(b,\cd)$ and $\mu_{b,\cd}$ by $K$ and $\mu$, respectively. The restriction to one dimension is simply because, at present, we are unable to establish the local counting property for missing digits measures in $\R^d$ with $d \ge 2$.

For any $\omega\in\cd^*:=\bigcup_{k\ge 1}\cd^k$, denote by $K_\omega$ and $\mu_\omega$ the branch of $K$ and $\mu$ associated to $\omega$, repectively (see \eqref{eq:branch} for precise definitions).
 To prove a local version of \eqref{eq:GCP}, observe that for any $\omega\in\cd^*$,
\begin{equation}\label{eq:branch Fourier}
	|\widehat{\mu_\omega}(\xi)|=|\widehat\mu(b^{-|\omega|}\xi)|,
\end{equation}
where $|\omega|$ denotes the length of $\omega$.
Consequently, it is shown in \cite[Pages 40-41]{Yufractals} that for any $1/2 < \gamma < \fldim\mu$, and any $\alpha$ and $\beta$ satisfying
$1 < \alpha < \frac{\gamma}{1-\gamma}$ and $0 < \beta < \frac{1-(1+\alpha)(1-\gamma)}{\gamma}$, we have
\begin{equation}\label{eq:local estimate inhomogeneous1}
	\sum_{q=Q}^{2Q}\mu_\omega\big(A(q,\eta,\theta)\big)\asymp Q\eta
\end{equation}
whenever $Q$ is large enough (depending on $\alpha$ and $\beta$ only), $\theta\in\R$, $Q^{-\alpha}\le\eta\le Q^{-1}$ and $|K_\omega|\ge Q^{-\beta}$. Note that $\mu_\omega=\frac{\mu|_{K_\omega}}{\mu(K_\omega)}$ for all $\omega\in\cd^*$ (see \eqref{eq:branch}). It follows that
\begin{equation}\label{eq:local estimate inhomogeneous2}
	\sum_{q=Q}^{2Q}\mu\big(K_\omega\cap A(q,\eta,\theta)\big)=\mu(K_\omega)\cdot \sum_{q=Q}^{2Q}\mu_\omega\big(A(q,\eta,\theta)\big)\asymp \mu(K_\omega)\cdot Q\eta.
\end{equation}
Although $A_Q(\eta,\theta)=\bigcup_{q=Q}^{2Q}A(q,\eta,\theta)$, this does not imply that
\[\mu\big(K_\omega\cap A_Q(\eta,\theta)\big)\asymp \mu(K_\omega)\cdot Q\eta\]
because the sets $A(q,\eta,\theta)$ may have overlaps. In \cite[Pages 57-58]{Yufractals}, after a carefully analysis of the measure of the overlaps, Yu was able to establish the above result for $\theta=0$. For arbitrary $\theta\in\R$, although some of his ideas are applicable, there are still several nontrivial difficulties.

%
%
\begin{lemma}\label{l:local inhomo}
	Suppose that $\fldim\mu>1/2$. For any $1/2<\gamma<\fldim\mu$, let $1<\alpha<\frac{\gamma}{1-\gamma}$ and $0<\beta<\frac{1-(1 +\alpha)(1-\gamma)}{\gamma}$. Then, for any $\theta\in \R$, $\mu$ satisfies the $(\alpha,\beta,\theta)$-local counting property.
\end{lemma}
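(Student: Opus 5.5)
The plan is to deduce the local counting property from the summed estimate \eqref{eq:local estimate inhomogeneous2}, in three steps: pass from balls to branches, get the upper bound for free, and obtain the lower bound by showing the overlaps among the sets $A(q,\eta,\theta)$, $Q\le q\le 2Q$, are negligible.

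\emph{Reduction to branches.} Every ball $B$ centred in $K$ with $|B|\ge Q^{-\beta}$ contains a branch $K_\omega$ with $|K_\omega|\asymp|B|$. It is also covered by $\ll 1$ branches $K_{\omega_i}$ with $|K_{\omega_i}|\asymp |B|$. Shrinking $\beta$ slightly (admissible, since the range for $\beta$ is open), every such branch still satisfies $|K_{\omega_i}|\ge Q^{-\beta'}$ with $\beta'$ in the allowed range. Ahlfors regularity then gives $\mu(B)\asymp\mu(K_\omega)$. So it suffices to prove
\[\mu\big(K_\omega\cap A_Q(\eta,\theta)\big)\asymp\mu(K_\omega)\,Q\eta\]
for branches with $|K_\omega|\ge Q^{-\beta}$ and $Q^{-\alpha}\le\eta\le Q^{-1}$.

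\emph{Upper bound.} This is immediate from the union bound together with \eqref{eq:local estimate inhomogeneous2}.

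\emph{Lower bound via second moments.} By Cauchy--Schwarz (Chung--Erd\H{o}s),
\[\mu\Big(K_\omega\cap\bigcup_q A(q,\eta,\theta)\Big)\ge\frac{\big(\sum_q\mu(K_\omega\cap A(q,\eta,\theta))\big)^2}{\sum_{q,q'}\mu\big(K_\omega\cap A(q,\eta,\theta)\cap A(q',\eta,\theta)\big)}.\]
The numerator is $\asymp\mu(K_\omega)^2Q^2\eta^2$ by \eqref{eq:local estimate inhomogeneous2}. So the task is to show the denominator is $\ll \mu(K_\omega)Q\eta$.
\begin{itemize}
\item The diagonal terms $q=q'$ contribute $\asymp\mu(K_\omega)Q\eta$.
\item For $q\ne q'$, a point $x$ in both sets satisfies $|x-(p+\theta)/q|<\eta/q$ and $|x-(p'+\theta)/q'|<\eta/q'$. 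Hence
\[\big|(q'-q)\theta+(q'p-qp')\big|\ll\eta Q.\]
Writing $k=q'-q$, the point $x$ therefore lies near the inhomogeneous point $(n+\theta)/q$ where the integer $n=(q'p-qp')$ is constrained.
\end{itemize}
The natural idea is to cover $A(q,\eta,\theta)\cap A(q',\eta,\theta)$ by a set of the form $A(k,\eta',\theta')$ with $\eta'\asymp\eta\,|k|/Q$ (up to constants), in the spirit of Yu's homogeneous overlap analysis. This is legitimate because $\|kx-(\text{shift})\|$ is small: subtracting $qx\approx p+\theta$ from $q'x\approx p'+\theta$ gives $\|kx\|\ll\eta$ with zero shift, so the overlap lies in $A(q,\eta,\theta)\cap A(k,2\eta,0)$.

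I would then bound
\[\sum_{q\ne q'}\mu\big(K_\omega\cap A(q,\eta,\theta)\cap A(|q-q'|,2\eta,0)\big)\]
by expanding the indicators of $A(q,\eta,\theta)$ and $A(k,2\eta,0)$ in Fourier series (smoothed at scale $\eta$). The expected main term is
\[\sum_{q}\sum_{0<|k|\le Q}\mu(K_\omega)\,\eta\cdot\eta\asymp\mu(K_\omega)Q^2\eta^2\le\mu(K_\omega)Q\eta,\]
using $\eta\le Q^{-1}$. The error terms are bilinear sums of $\widehat{\mu_\omega}$ at frequencies $aq+bk$. I would control these via \eqref{eq:branch Fourier} and the $l^1$ bound with exponent $\gamma$, exactly as in Yu's estimate giving \eqref{eq:local estimate inhomogeneous1}. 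The constraints $\alpha<\gamma/(1-\gamma)$ and $\beta<(1-(1+\alpha)(1-\gamma))/\gamma$ should be precisely what makes these errors $o(\mu(K_\omega)Q\eta)$.

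\emph{Main obstacle.} The hardest step is this off-diagonal Fourier estimate. The $\theta$-phases $e^{-2\pi i a\theta}$ appear in the expansion, but since only absolute values of $\widehat{\mu_\omega}$ are used, the bound should be uniform in $\theta$. What must be checked is that the frequency multiplicity of the representation $aq+bk$ over the ranges of $a,b,q,k$ is at most a divisor-type factor $Q^{\varepsilon}$, which is absorbed by the strict inequalities in the ranges of $\alpha$ and $\beta$.
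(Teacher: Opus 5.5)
Your reduction to branches and your upper bound are fine. The lower bound via Chung--Erd\H{o}s, however, cannot work, because the denominator is genuinely too large. Write $f_Q=\sum_{q=Q}^{2Q}\cha_{A(q,\eta,\theta)}$. The denominator is $\mu(K_\omega)\int f_Q^2\,\dif\mu_\omega$, and this is not $\ll \mu(K_\omega)Q\eta$.

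\emph{Logarithmic loss, even for Lebesgue measure.} Take $\theta=0$. A point within $\eta/(4Q)$ of a reduced fraction $a/n$ with $n\le Q$ lies in $A(q,\eta,0)$ for every multiple $q$ of $n$ in $[Q,2Q]$, so $f_Q\gg Q/n$ there. For Lebesgue measure this already gives $\int f_Q^2\gg Q\eta\sum_{n\le Q}\varphi(n)/n^2\asymp Q\eta\log Q$. Since the union has measure $\asymp Q\eta$, Chung--Erd\H{o}s yields only $Q\eta/\log Q$.

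\emph{Polynomial loss for the fractal measure.} $K$ always contains a rational of bounded denominator, for instance $d_0/(b-1)$ with all digits equal to $d_0\in\cd$. If $K_\omega$ contains such a point (e.g.\ $\omega$ empty), the ball of radius $\asymp\eta/Q$ around it alone contributes $\gg Q^2(\eta/Q)^\delta$ to $\int f_Q^2\,\dif\mu_\omega$. This exceeds $Q\eta$ by the factor $(Q\eta)^{-(1-\delta)}=Q^{(\alpha-1)(1-\delta)}$ at $\eta=Q^{-\alpha}$.

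\emph{Where your Fourier computation goes wrong.} This shows up in two places.
\begin{enumerate}
\item The pairs with $aq+bk=0$, $(a,b)\ne(0,0)$, are not error terms. They carry $\widehat{\mu_\omega}(0)=1$, and for $\theta=0$ they contribute $\asymp\gcd(q,q')\,\eta/Q$ per pair, summing to $\asymp Q\eta\log Q$.
\item The representation count of $\xi=aq+bk$ is not divisor-bounded. You have four free variables and one linear relation, so there are $\asymp Q^2/\eta^2$ quadruples spread over $\asymp Q/\eta$ frequencies. Hence the $l^1$ bound on $\widehat{\mu_\omega}$ does not control the off-diagonal error.
\end{enumerate}

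\emph{The missing idea: avoid the second moment.} The paper uses the truncated first-moment bound
\[\mu_\omega\big(A_Q(\eta,\theta)\big)\ge k^{-1}\int_{\{f_Q<k\}}f_Q\,\dif\mu_\omega\]
for a large constant $k$. It then only needs the tail estimate $\int_{\{f_Q\ge k\}}f_Q\,\dif\mu_\omega\ll Q\eta/k+Q^\rho\eta\log Q$.

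The tail is controlled structurally. If $f_Q(x)=m\ge2$, differencing the $m$ denominators gives $m-1$ solutions of $\|qx\|<2\eta$ with $q\le Q$; note that $\theta$ cancels here, which is what makes the argument uniform in $\theta$, rather than discarding phases. The $1/(qq')$-separation of rationals, together with $\eta<Q^{-1}/4$, forces all these approximations to come from one reduced fraction $a/n$. Hence $\|nx\|<2\eta/(m-1)$ for some $n\le Q/(m-1)$.

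This puts the high-multiplicity set inside the homogeneous single-sum set $\bigcup_{n\le Q/(m-1)}A(n,2\eta/(m-1),0)$. Yu's Fourier estimate, whose frequency multiplicity really is the divisor function, handles that set and gives $\int_{\{f_Q\in[m,2m)\}}f_Q\,\dif\mu_\omega\ll Q\eta/m+Q^\rho\eta$. Summing dyadically over $m\ge k$ finishes the proof.
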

\begin{proof}
	Let $\theta \in \R$. To prove the lemma, it is enough to show that
	\begin{equation}\label{eq:GCP local}
		\mu\big(K_\omega \cap A_Q(\eta,\theta)\big) \asymp \mu(K_\omega) \cdot Q \eta,
	\end{equation}
	whenever $Q$ is sufficiently large (depending only on $\alpha$ and $\beta$),  $Q^{-\alpha} \le \eta \le Q^{-1}$, and $|K_\omega| \ge Q^{-\beta}$.

	The upper bound of \eqref{eq:GCP local} follows directly from \eqref{eq:local estimate inhomogeneous2}, so it remains to establish the corresponding lower bound. Define the function
	\[f_Q=\sum_{q=Q}^{2Q}\cha_{A(q,\eta,\theta)}.\]
	Note that $f_Q$ is a non-negative integer-valued function. If $f_Q(x)=k>0$, then
	$x\in A(q,\eta,\theta)$ for precisely $k$ integers $q$ with $Q\le q\le 2Q$. In particular, $x\in A_Q(\eta,\theta)$. Therefore,
	\begin{align}
		0<f_Q(x)<k\quad&\Longrightarrow\quad f_Q(x)/k<1=\cha_{A_Q(\eta,\theta)} (x)\notag\\
		&\Longrightarrow\quad \frac{\int_{\{f_Q<k\}}f_Q\dif\mu_\omega}{k} \le \mu_\omega\big(A_Q(\eta,\theta)\big) \label{eq:f_Q/k}.
	\end{align}

	We claim that there exists $\rho\in(0,1)$ such that for any $2\le k<Q$,
	\begin{equation}\label{eq:claim}
		\int_{\{f_Q\ge k\}}f_Q\dif\mu_\omega\ll \frac{Q \eta}{k} + Q^\rho \eta \log_2 Q,
	\end{equation}
	where the implied constant is independent of $Q, k,\theta$ and $\eta$. This gives that for $k$ and $Q$ large enough,
	\[\begin{split}
		\mu\big(K_\omega\cap A_Q(\eta,\theta)\big)&\stackrel{\eqref{eq:branch}}{=}\mu(K_\omega)\cdot \mu_\omega\big(A_Q(\eta,\theta)\big)\stackrel{\eqref{eq:f_Q/k}}{\ge} \mu(K_\omega)\cdot  \frac{\int_{\{f_Q<k\}}f_Q\dif\mu_\omega}{k}\\
		&\gg\mu(K_\omega)\cdot \frac{Q\eta}{k},
	\end{split}\]
	where the last inequality follows from
	\[\begin{split}
		\int_{\{f_Q<k\}}f_Q\dif\mu_\omega&=\int f_Q\dif\mu_\omega-\int_{\{f_Q\ge k\}}f_Q\dif\mu_\omega\\
		&=\sum_{q=Q}^{2Q}\mu_\omega\big(A(q,\eta,\theta)\big)-\int_{\{f_Q\ge k\}}f_Q\dif\mu_\omega\\
		&\stackrel{\eqref{eq:local estimate inhomogeneous1},\eqref{eq:claim}}{\gg}Q\eta-c\bigg(\frac{Q \eta}{k} + Q^\rho \eta \log_2 Q\bigg)\asymp Q\eta
	\end{split}\]
	whenever $k$ and $Q$ are large enough.

	Now, we prove the claim, and in the course of the proof it will become clear how the parameters $\alpha,\beta,\gamma$ in the lemma arise. Let $Q^{-\alpha}\le \eta < Q^{-1}/4$. Once the lower bound in \eqref{eq:GCP local} is established for this case, the corresponding lower bound for $\eta$ in the range $Q^{-1}/4 \le \eta \le Q^{-1}$ follows immediately. Let $x\in \R$. If $f_Q(x)=m\ge 2$, then there exist exactly distinct $Q\le q_1<\cdots<q_m\le 2Q$ such that
	\[\|q_ix-\theta\|<\eta\quad\text{for $1\le i\le m$}.\]
	Consequently, for $1\le i\le m-1$
	\[\|(q_m-q_i)x\|=\|q_mx-\theta-(q_ix-\theta)\|\le 2\eta.\]
	This implies that $\|qx\|<2\eta$ has $m-1$ solutions for $q\le Q$. Let $q_{\max}$ be the largest integer for which $\|q_{\max} x\|<2\eta$, and let $a/n$ be the reduced fraction of $p_{\max}/q_{\max}$.  Since the distance between any two distinct rationals $p/q$ and $p'/q'$ is at least $1/(qq')$, we see that if $\|qx\|<2\eta, \|q'x\|<2\eta$ with $q>q'$, then
	\begin{equation}\label{eq:fraction equal}
		\bigg|x-\frac{p}{q}\bigg|<\frac{2\eta}{q}\text{ and }\bigg|x-\frac{p'}{q'}\bigg|<\frac{2\eta}{q'}\quad\Longrightarrow\quad \bigg|\frac{p}{q}-\frac{p'}{q'}\bigg|<\frac{4\eta}{q'}.
	\end{equation}
	Since $\eta<Q^{-1}/4$, the last inequality is less than $1/(qq')$, and so $p/q=p'/q'$. Therefore, any rational number $p/q$ satisfying $|x-p/q|<2\eta/q$ has the form $\frac{at}{nt}$ for some integer $t\ge 1$. Since there are $m-1$ many such rationals $p/q$ and all the denumerator is less than $Q$, we have that $1\le n\le q_{\max}/(m-1)\le Q/(m-1)$. Hence,
	\begin{equation}\label{eq:reduce}
		\bigg|x-\frac{a}{n}\bigg|=\bigg|x-\frac{p_{\max}}{q_{\max}}\bigg|<\frac{2\eta}{q_{\max}}<\frac{2\eta}{n(m-1)}\quad\Longrightarrow\quad \|nx\|<\frac{2\eta}{m-1}.
	\end{equation}
	In summary, we have proved
	\[f_Q(x)=m\quad\Longrightarrow\quad \|nx\|<\frac{2\eta}{m-1} \text{ for some $n\le \frac{Q}{m-1}$}.\]
	This implies that
	\begin{align}
		\int_{\{f_Q\in[m,2m)\}}f_Q\dif\mu_\omega&\le
		2m\int\cha_{\bigcup_{q=1}^{Q/(m-1)}A(q,2\eta/(m-1),0)}\dif\mu_\omega\notag\\
		&\le 2m \int \sum_{q=1}^{Q/(m-1)}\cha_{A(q,2\eta/(m-1),0)}\dif\mu_\omega.\label{eq:fQ<m}
	\end{align}

	Define a new function
	\begin{equation}\label{eq:g}
		g=\sum_{q=1}^{Q/(m-1)}\cha_{A(q,2\eta/(m-1),0)}.
	\end{equation}
	We can modify $g$ by replacing the charater functions with smooth functions just as in the proof of \cite[Theorem 4.1]{Yufractals}, and we see that it is possible to show that
	\begin{equation}\label{eq:Fourier g}
		\int_{[0,1]} g\dif\mu_\omega\ll\int_{[0,1]}g(x)\dif x+\sum_{\xi\in\Z:0<|\xi|\le Q/\eta}|\widehat{\mu_\omega}(\xi)\widehat  g(\xi)|.
	\end{equation}
	The first term on the right can be easily computed and we have
	\begin{equation}\label{eq:volume g}
		\int_{[0,1]}g(x)\dif x\stackrel{\eqref{eq:g}}{\ll}\frac{Q}{m-1}\cdot \frac{2\eta}{m-1}\asymp\frac{Q\eta}{m^2}.
	\end{equation}
	For the second term, observe that for $\xi\ne 0$, we have
	\[|\widehat g(\xi)|\ll \sum_{q=1}^{Q/(m-1)}\eta/m \cdot \cha_{q|\xi}\le d(\xi)\eta/m,\]
	where $d(\xi)$ denote the number of divisors of $\xi$. The proof of this inequality can be found in \cite[page 16]{Yufractals}.
	This together with \eqref{eq:branch Fourier} gives
	\[\begin{split}
		\sum_{\xi\in\Z:0<|\xi|\le Q/\eta}|\widehat{\mu_\omega}(\xi)\widehat  g(\xi)|&\ll \sum_{\xi\in\Z:0<|\xi|\le Q/\eta}|\widehat{\mu_\omega}(\xi)| d(\xi)\eta/m\\
		&=\eta/m \cdot \sum_{\xi\in\Z:0<|\xi|\le Q/\eta}|\widehat{\mu}(b^{-|\omega|}\xi)| d(\xi).
	\end{split}\]
	It then follows from an argument in \cite[Page 40]{Yufractals} that for each $1/2<\gamma<\fldim\mu,$
	\begin{equation}\label{eq:Fourier sum}
		\sum_{0<|\xi|\le Q/\eta}|\widehat{\mu}(b^{-|\omega|}\xi)| d(\xi)\ll b^{\gamma|\omega|}Q^{1-\gamma}\eta^{-(1-\gamma)}.
	\end{equation}
	Therefore, for
	\begin{equation}\label{eq:taubeta}
		1<\alpha<\frac{\gamma}{1-\gamma} \qaq 0<\beta<\frac{1-(1+\alpha)(1-\gamma)}{\gamma},
	\end{equation}
	 we have that
	\begin{equation}\label{eq:rho}
		b^{\gamma|\omega|}Q^{1-\gamma}\eta^{-(1-\gamma)}\le Q^{\rho}\quad\text{with $\rho=\gamma\beta+(1+\alpha)(1-\gamma)\in(0,1)$}
	\end{equation}
	whenever $Q$ is large enough (depending on $\alpha$ and $\beta$ only), $Q^{-\alpha}\le\eta\le Q^{-1}$ and  $|K_\omega|=b^{-|\omega|}\ge Q^{-\beta}$. This means that
	\begin{equation}\label{eq:fQ good}
		\int_{\{f_Q\in [m,2m)\}}f_Q\dif\mu_\omega\stackrel{\eqref{eq:fQ<m}}{\ll} m\int g\dif\mu_\omega\stackrel{\eqref{eq:Fourier g},\eqref{eq:volume g}, \eqref{eq:rho}}{\ll} \frac{Q\eta}{m}+Q^{\rho}\eta.
	\end{equation}

	Note that
	\[
	\int_{\{f_Q\ge k\}} f_Q \, \dif \mu_\omega
	=
	\sum_{j\ge 0:\, 2^j k \le Q}
	\int_{\{f_Q \in [2^j k, 2^{j+1} k)\}} f_Q \, \dif \mu_\omega.
	\]
	Since there are at most $\log_2 Q$ possibilities for $j$, it follows from \eqref{eq:fQ good} that
	\[
	\begin{split}
		\sum_{j\ge 0:\, 2^j k \le Q^{1-\rho'}}
		\int_{\{f_Q \in [2^j k, 2^{j+1} k)\}} f_Q \, \dif \mu_\omega
		&\ll
		\sum_{j\ge 0:\, 2^j k \le Q} \biggl(\frac{Q\eta}{2^j k} + Q^\rho \eta \biggr)\\
		&\ll
		\frac{Q \eta}{k} + Q^\rho \eta \log_2 Q,
	\end{split}
	\]
	which completes the proof of the claim.
\end{proof}
\begin{remark}\label{r:alpha< missing digits}
The choice of the parameters $\alpha$ and $\beta$ in Lemma \ref{l:local inhomo} indeed satisfies
\[
\alpha \le \frac{1+\delta-\beta\delta}{1-\delta},
\]
as stated in Remark \ref{r:alphabeta}. To prove this inequality, it suffices to show that
\[
1-\alpha + (1+\alpha-\beta)\delta \ge 0.
\]
	Using the upper bound $\beta<\frac{1-(1 +\alpha)(1-\gamma)}{\gamma}$, we obtain
	\[\begin{split}
		1-\alpha+(1+\alpha-\beta)\delta&\ge 1-\alpha+\bigg(1+\alpha-\frac{1-(1 +\alpha)(1-\gamma)}{\gamma}\bigg)\delta\\
		&=1+\alpha(\delta/\gamma-1).
	\end{split}\]
	The desired conclusion then follows from the fact that, for missing digits measures,
	$\delta \ge \fldim \mu > \gamma$ (see \cite[Lemma 1.4 (1) and \S 3.6]{Yufractals}).
\end{remark}
\begin{proof}[Proof of Theorem \ref{t:example2}]
	(1) Since $\fldim\mu>1/2$, it follows from Lemma~\ref{l:local inhomo} that $\mu$ satisfies the $(\alpha,\beta,\bzero)$-local counting property for every $\theta\in\R$. The conclusion follows directly from Corollary \ref{c:corollary} (1).

	(2) Since $\hdim K\fldim\mu=\delta\cdot \fldim\mu>1/2$, we can choose $\gamma$ such that $1/2 < \gamma < \fldim\mu$ and $\delta \gamma > 1/2$.  With this choice of $\gamma$, we further choose parameters $\alpha$ and $\beta$ such that
	\begin{equation}\label{eq:alpha additional missing}
		1<\alpha<\frac{2\delta \gamma-\gamma}{1-\gamma}\qaq 2(1-\delta)\le\beta<\frac{1-(1+\alpha)(1-\gamma)}{\gamma}.
	\end{equation}
	 In order for the set of admissible $\beta$ values to be nonempty, and thus for $\delta \ge 1 - \beta/2$ to hold, the upper bound that $\alpha$ can take must be less than $\frac{\gamma}{1-\gamma}$. Indeed, the set of admissible $\beta$ values is nonempty if and only if
	 \[
	 2(1-\delta)<\frac{1-(1+\alpha)(1-\gamma)}{\gamma},
	 \]
	 which is exactly equivalent to
	 \[
	 \alpha<\frac{2\delta \gamma-\gamma}{1-\gamma},
	 \] as stated in \eqref{eq:alpha additional missing}. Once $\beta$ is chosen according to the lower bound in \eqref{eq:alpha additional missing}, the inequality $\delta \ge 1-\beta/2$ is automatically satisfied.

	  For the parameters $\alpha$ and $\beta$ chosen above, it follows from Lemma~\ref{l:local inhomo} that $\mu$ satisfies the $(\alpha,\beta,0)$-local counting property. Since $\delta\ge 1-\beta/2$, by Corollary \ref{c:corollary} (2), we have
		\[	\hm^{\delta+\frac{2}{1+\tau}-1}\big(K\cap E_1(\tau,0)\big)=\infty,\]
		whenever $1<\tau<\alpha$.
\end{proof}
\section{Proof of Theorem \ref{t:example3}}\label{s:higher dimension}
Throughout this section, let $K\subset\R^d$ be a missing digits set equipped with a $\delta$-Ahlfors regular measure $\mu$. Assume further that the following conditions hold:
\begin{enumerate}[({A}1)]
	\item $K = K_1 \times \cdots \times K_{d-1} \times [0,1]$, where each $K_j$ ($1\le j\le d-1$) is a missing digits set associated with the digit set $\mathcal{D}_j \subset \{0,1,\dots,b-1\}$;
	\item there exists $\frac{d}{d+1} < \gamma < 1$ such that each missing digits measure $\mu_j$ ($1 \le j \le d-1$) associated with $K_j$ satisfies $\fldim \mu_j > \gamma$.
\end{enumerate}
For convenience, we set $K_d = [0,1]$ and denote the corresponding missing digits measure by $\mu_d$. Clearly, the digit set $\cd_d$ associated with $K_d$ is $\{0,1,\dots,b-1\}$; moreover, we have $\fldim \mu_d =1> \gamma$.
 For each $1 \le j \le d$, write $\hdim K_j = \delta_j$. Then $\delta=\sum_{j=1}^d\delta_j=1+\sum_{j=1}^{d-1}\delta_j$.

The upper bound for $K\cap W_d(\psi_\tau,\bzero)$ is a direct consequence of \eqref{eq:upper bound} in Theorem \ref{t:main} and Corollary \ref{c:self-similar set}, since the missing digits set $K$ satisfying (A1) is strongly irreducible and satisfies the OSC.
\begin{lemma}\label{l:upper bound missing}
	There exists $\alpha>1/d$ such that
	\[\hdim \big(K\cap W_d(\psi_\tau,\bzero)\big)\le \delta+\frac{1+d}{1+\tau}-d,\quad\text{for any $\tau\in(1/d,\alpha)$}.\]
\end{lemma}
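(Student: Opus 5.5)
The plan is to deduce Lemma \ref{l:upper bound missing} from the upper bound \eqref{eq:upper bound} of Theorem \ref{t:main}. That bound rests only on the global counting estimate \eqref{eq:global counting} through Lemma \ref{l:global counting}, and the local counting property is much more than we need. So the task is to verify that $\mu$ satisfies the $(\alpha,\beta,\bzero)$-local counting property for some $\alpha>1/d$ and $\beta\in(0,1)$. We also need $\alpha<\frac{1+\delta-\beta\delta}{d-\delta}$, and this can always be arranged by shrinking $\alpha$ toward $1/d$. Indeed, at $\alpha=1/d$ the required inequality reads $d-\delta<d(1+\delta-\beta\delta)$, which holds because $\beta<1$.

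\textbf{Step 1 (the IFS is admissible).} First I will check that $K=K_1\times\cdots\times K_{d-1}\times[0,1]$ is the attractor of the IFS $\{\bx\mapsto(\bx+\bd)/b:\bd\in\cd_1\times\cdots\times\cd_{d-1}\times\{0,\dots,b-1\}\}$. This system satisfies the OSC with $U=(0,1)^d$, and its natural measure $\mu=\mu_1\times\cdots\times\mu_d$ is the normalized $\hm^\delta|_K$, which is $\delta$-Ahlfors regular.

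\textbf{Step 2 (strong irreducibility).} This is the step I expect to be the main obstacle. All maps are homotheties with the same ratio and trivial rotation part, so a finite union of proper affine subspaces invariant under the IFS must contain the attractor $K$. The reason is that the fixed points of the maps lie in the union, and their images under all compositions are dense in $K$. I therefore need $K$ not to be contained in a finite union of proper affine subspaces. Since each $K_j$ with $j\le d-1$ has $\fldim\mu_j>\gamma>0$, the digit set $\cd_j$ has at least two elements, so $K_j$ is infinite and in fact uncountable. Hence $K$ contains a product of uncountable sets, and this product has positive $\mu$-measure on every cylinder. Because $\mu$ is a product of non-atomic measures, it gives zero mass to every proper affine hyperplane. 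This excludes containment in finitely many hyperplanes, so the IFS is strongly irreducible.

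\textbf{Step 3 (conclusion).} With Steps 1 and 2 in hand, Corollary \ref{c:self-similar set} gives the $(\alpha,\beta,\bzero)$-local counting property for every $\alpha>1/d$ and every $0<\beta<1+\frac{d\alpha(d+1)\kappa}{d+1+d\kappa}-d\alpha$, where $\kappa=\kappa(K)>0$ comes from Theorem \ref{t:effective equidistribution}. As $\alpha\downarrow1/d$ the upper bound for $\beta$ tends to $\frac{(d+1)\kappa}{d+1+d\kappa}\in(0,1)$. I will fix such a $\beta\in(0,1)$ and then pick $\alpha>1/d$ close enough to $1/d$ that both the range condition on $\beta$ and the constraint $\alpha<\frac{1+\delta-\beta\delta}{d-\delta}$ hold. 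Applying \eqref{eq:upper bound} with $\psi=\psi_\tau$, for which $\tau(\psi_\tau)=\tau$, then yields the claimed bound for every $\tau\in(1/d,\alpha)$.
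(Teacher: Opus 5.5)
Your proposal is correct and takes the same route as the paper. Like the paper, you check the OSC and strong irreducibility of the product IFS, invoke Corollary~\ref{c:self-similar set} to get the $(\alpha,\beta,\bzero)$-local counting property, and then apply the upper bound \eqref{eq:upper bound} of Theorem~\ref{t:main}. Your Fubini argument for strong irreducibility and your explicit choice of $\beta<1$ with $\alpha$ near $1/d$ (so that $\alpha<\frac{1+\delta-\beta\delta}{d-\delta}$) fill in details the paper leaves implicit. One small correction: the limit $\frac{(d+1)\kappa}{d+1+d\kappa}$ lies below $1$ only when $\kappa<d+1$, so take $\beta<\min\{1,\frac{(d+1)\kappa}{d+1+d\kappa}\}$.
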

\begin{proof}
	The product structure of $K$ (see (A1)) guarantees that it is strongly irreducible, while its missing digits nature ensures that $K$ satisfies the OSC.
	By Corollary~\ref{c:self-similar set}, $\mu$ satisfies the $(\alpha,\beta,\bzero)$-local counting property for some $\alpha > 1/d$ and $\beta > 0$. The conclusion of the lemma then follows directly from \eqref{eq:upper bound} in Theorem~\ref{t:main}.
\end{proof}

In order to prove the lower bound for $\hdim(K \cap W_d(\psi_\tau, \mathbf{0}))$, we first establish several auxiliary results. For any $q\in\N\setminus\{0\}$ and $\bmeta=(\eta_1,\dots,\eta_d)\in (\R^+)^d$, define
\[R(q,\bmeta):=\{(x_1,\dots,x_d)\in[0,1]^d:\|qx_j\|\le\eta_j\text{ for $1\le j\le d$}\}.\]
\begin{lemma}[{\cite[Lemma 5.2]{HeWeightedMA}}]\label{l:meaupp}
	Let $q\in\N\setminus\{0\}$ and $\bmeta\in(0,1/2)^d$. Let $\nu$ be a Borel probability measure supported on $[0,1]^d$. Then,
	\[\nu\big(R(q,\bmeta)\big)\ll\eta_1\cdots\eta_d\Bigg(1+\sum_{\substack{\bxi\in\Z^d\setminus\{\bzero\}\\ \forall1\le j\le d, |\xi_j|\le 2/\eta_j}}|\widehat{\nu}(q\bxi)|\Bigg),\]
	where the implied constant is independent of $\nu$.
\end{lemma}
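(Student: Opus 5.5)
This is the standard Fourier/Erdős–Turán bound for the mass of a box-type neighbourhood of the grid $\frac1q\Z^d$. The plan is to majorize $\cha_{R(q,\bmeta)}$ by a nonnegative product of one-dimensional Fejér-type kernels, expand that majorant in a Fourier series on the torus, and integrate the series against $\nu$.

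\emph{Step 1: a one-dimensional majorant.} For each $1\le j\le d$ choose a $1$-periodic function $\varphi_j\ge 0$ with the following properties:
\begin{itemize}
\item $\varphi_j(t)\gg 1$ whenever $\|t\|\le\eta_j$;
\item $\widehat{\varphi_j}(0)\asymp\eta_j$;
\item $|\widehat{\varphi_j}(k)|\ll\eta_j$ for all $k$;
\item $\widehat{\varphi_j}(k)=0$ for $|k|>2/\eta_j$.
\end{itemize}
A concrete choice is the Fejér kernel $\varphi_j(t)=\frac{1}{N_j}F_{N_j}(t)$ with $N_j\asymp 1/\eta_j$ (for instance $N_j=\lfloor 1/(2\eta_j)\rfloor\ge1$, using $\eta_j<1/2$). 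This works because:
\begin{itemize}
\item $F_N(t)=\sum_{|k|<N}(1-|k|/N)e^{2\pi ikt}=\frac1N\big(\frac{\sin\pi Nt}{\sin\pi t}\big)^2\ge0$;
\item $F_N(t)\gg N$ for $\|t\|\le 1/(2N)$;
\item the Fourier coefficients of $\frac1N F_N$ are bounded by $1/N\asymp\eta_j$ and vanish for $|k|\ge N$.
\end{itemize}
Up to harmless constants this gives $\cha_{\{\|t\|\le\eta_j\}}(t)\ll\varphi_j(t)$.

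\emph{Step 2: the product majorant and its integral.} Set $\Phi(\bx)=\prod_{j=1}^d\varphi_j(qx_j)$. Then $\cha_{R(q,\bmeta)}\ll\Phi$ pointwise on $[0,1]^d$. Expanding each factor, we get
\[
\Phi(\bx)=\sum_{\bxi\in\Z^d}\Big(\prod_j\widehat{\varphi_j}(\xi_j)\Big)e^{2\pi i q\bxi\cdot\bx},
\]
which is a finite sum over $|\xi_j|\le 2/\eta_j$. Integrating against $\nu$ gives
\[
\nu(R(q,\bmeta))\ll\int\Phi\,d\nu=\sum_{\bxi}\Big(\prod_j\widehat{\varphi_j}(\xi_j)\Big)\overline{\widehat\nu(q\bxi)},
\]
where we use $\int e^{2\pi i q\bxi\cdot\bx}\dif\nu=\overline{\widehat\nu(q\bxi)}$.

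\emph{Step 3: separate the zero frequency.} The term $\bxi=\bzero$ contributes $\prod_j\widehat{\varphi_j}(0)\asymp\eta_1\cdots\eta_d$, since $\widehat\nu(\bzero)=1$. Every other term is bounded in absolute value by $\prod_j\eta_j\cdot|\widehat\nu(q\bxi)|$, summed over $\bxi\ne\bzero$ with $|\xi_j|\le2/\eta_j$. This is exactly the stated bound. The implied constants depend only on $d$ and the kernel, not on $\nu$.

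The only delicate point is the pointwise lower bound for the Fejér kernel near integers. This requires matching the scale $N_j$ to $\eta_j$ so that the support of $\widehat{\varphi_j}$ lies in $|k|\le 2/\eta_j$, while $\varphi_j$ is still bounded below on the whole window $\|t\|\le\eta_j$. If $N=\lfloor 1/(2\eta)\rfloor$ makes the window slightly too wide, I would instead dilate: take $N\asymp 1/\eta$ with constant $1/2$ and use the bound $|\sin\pi Nt|\ge 2Nt$ for $|Nt|\le1/2$. That yields $F_N(t)\ge \frac{4}{\pi^2}N$ on $\|t\|\le 1/(2N)\supseteq\{\|t\|\le\eta\}$.
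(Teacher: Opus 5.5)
Your proof is correct and complete. With $N_j=\lfloor 1/(2\eta_j)\rfloor\ge 1$ you get $\eta_j\le 1/(2N_j)$, so $\frac{1}{N_j}F_{N_j}\ge 4/\pi^2$ on the whole window $\|t\|\le\eta_j$ and no dilation fix is needed. Also $2\eta_j\le 1/N_j\le 4\eta_j$, and the coefficients vanish for $|k|\ge N_j$, so the product majorant yields the bound, even with the sharper cutoff $|\xi_j|<1/(2\eta_j)$. The paper gives no proof of this lemma; it imports it from \cite[Lemma 5.2]{HeWeightedMA}, and your argument (a nonnegative Fejér-type majorant with compactly supported Fourier coefficients) is the standard route to such an estimate, so there is nothing further to compare.
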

The following lemma can be viewed as a variant of the quantitative non-divergence result of Bernik, Kleinbock, and Margulis \cite[Theorem 1.4]{BerKleMarIMRN}, which originally concerned the Lebesgue measure on manifolds, and is now adapted to missing digits measures. It should be noted that the proof is similar to that of \eqref{eq:fQ good}.
\begin{lemma}\label{l:nondivergence}
Let $\frac{d}{d+1}<\gamma<1$ be as in assumption (A2), and let
\[	1\le \alpha<\frac{1-d+d\gamma}{1-\gamma}\qaq 0<\beta<\frac{1-d+d\gamma+\alpha(\gamma-1)}{d\gamma}.\]

	Let $\boa=(\omega_1,\dots,\omega_d)\in\cd^*:=(\prod_{j=1}^{d}\cd_j)^*$. Then, we have
	\begin{equation}\label{eq:non-divergence estimate}
		\sum_{1\le q\le Q}\mu_\boa\big(R(q,\bmeta)\big)\ll Q\eta_1\cdots\eta_d
	\end{equation}
	whenever $Q$ is large enough (depending only on $\alpha$ and $\beta$), $|K_\boa|=b^{-|\boa|}\ge Q^{-\beta}$ and  $\bmeta=(\eta_1,\dots,\eta_d)\in (0,1/2)^d$ with $Q^{-\alpha}\le \eta_1\cdots\eta_d<2^{-d} $. Consequently, if $B\subset K$ satisfies $|B|\ge Q^{-\beta}$, then
	\begin{equation}\label{eq:non-divergence estimate for B}
		\sum_{1\le q\le Q}\mu\big(B\cap R(q,\bmeta)\big)\ll \mu(B)\cdot Q\eta_1\cdots\eta_d.
	\end{equation}
\end{lemma}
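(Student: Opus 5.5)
We apply Lemma~\ref{l:meaupp} to $\nu=\mu_\boa$ for each $q\le Q$, sum over $q$, and control the resulting Fourier sum with the Fourier $l^1$-dimension bound from (A2), in the same spirit as the proof of \eqref{eq:fQ good}. Since $\mu_\boa=\mu_{\omega_1}\times\cdots\times\mu_{\omega_d}$ is a product measure, the Fourier coefficients factorise. Summing Lemma~\ref{l:meaupp} over $q$ gives
\[\sum_{q\le Q}\mu_\boa\big(R(q,\bmeta)\big)\ll \eta_1\cdots\eta_d\Bigg(Q+\sum_{q\le Q}\sum_{\substack{\bxi\ne\bzero\\ |\xi_j|\le 2/\eta_j}}\prod_{j=1}^d\big|\widehat{\mu_j}(b^{-|\omega_j|}q\xi_j)\big|\Bigg),\]
using $|\widehat{\mu_{\omega_j}}(\xi)|=|\widehat{\mu_j}(b^{-|\omega_j|}\xi)|$ as in \eqref{eq:branch Fourier}. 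The first term is already the desired main term. It remains to show that the double sum is $\ll Q$.

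To handle the error term, we reindex by $\boldsymbol{\zeta}=q\bxi$. Each nonzero $\boldsymbol{\zeta}\in\Z^d$ with $|\zeta_j|\le 2Q/\eta_j$ arises from at most $d(\gcd(\boldsymbol\zeta))\ll_\epsilon Q^{\epsilon}/\min_j\eta_j^{\epsilon}$ pairs $(q,\bxi)$. The sum is therefore bounded by $Q^{\epsilon}\prod_j\eta_j^{-\epsilon}$ times $\prod_j\big(\sum_{|\zeta_j|\le 2Q/\eta_j}|\widehat{\mu_j}(b^{-|\omega_j|}\zeta_j)|\big)$, with the all-zero term removed. Here we use that coordinates with $\zeta_j=0$ contribute the factor $1$, while nonzero coordinates are summed.

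For each factor, the $l^1$-dimension bound $\fldim\mu_j>\gamma$, combined with periodic rescaling (the sum over integers $\zeta$ of $|\widehat{\mu_j}(b^{-n}\zeta)|$ up to height $M$ is $\ll b^{n\gamma}M^{1-\gamma}$ when $M\ge b^n$, as in \eqref{eq:Fourier sum}), gives a factor $\ll b^{\gamma|\omega_j|}(Q/\eta_j)^{1-\gamma}$. Since $|\omega_j|=|\boa|$ and $b^{|\boa|}\le Q^{\beta}$, the product is at most
\[
Q^{d\gamma\beta+\epsilon}\,Q^{d(1-\gamma)}(\eta_1\cdots\eta_d)^{-(1-\gamma)-\epsilon}\le Q^{d\gamma\beta+d(1-\gamma)+\alpha(1-\gamma)+O(\epsilon)},
\]
using $\eta_1\cdots\eta_d\ge Q^{-\alpha}$. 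This is $\ll Q$ exactly when $d\gamma\beta+d(1-\gamma)+\alpha(1-\gamma)<1$, i.e.\ $\beta<\frac{1-d+d\gamma+\alpha(\gamma-1)}{d\gamma}$. This is precisely the hypothesis on $\beta$, and the condition on $\alpha$ makes this range nonempty. Choosing $\epsilon$ small then yields \eqref{eq:non-divergence estimate}.

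The main obstacle is the mixed terms in which some $\zeta_j=0$ and $\eta_j$ is tiny. Those coordinates should simply be dropped, giving factors $1$, which only helps. The concern is that the divisor-type bound and the factor $(Q/\eta_j)^{1-\gamma}$ must not lose a power when individual $\eta_j$ are very unbalanced. The product bound above handles this uniformly, since only $\prod\eta_j$ enters, but this needs care when checking the rescaling estimate for each coordinate with $b^{|\omega_j|}$ compared against $Q/\eta_j$. Finally, \eqref{eq:non-divergence estimate for B} follows as in Corollary~\ref{c:self-similar set}: cover $B$ by $\asymp1$ cylinders $K_\boa$ with $|K_\boa|\asymp|B|\ge Q^{-\beta}$ (adjusting $\beta$ slightly), and use $\mu(K_\boa\cap\cdot)=\mu(K_\boa)\mu_\boa(\cdot)$ together with $\mu(K_\boa)\asymp\mu(B)$.
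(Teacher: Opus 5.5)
Your proposal is correct and follows the paper's proof essentially step for step: you apply Lemma~\ref{l:meaupp} summed over $q$, reindex $q\bxi$ with a divisor count, factorise using the product structure, bound each coordinate by the rescaled estimate \eqref{eq:Fourier sum}, conclude from $d\gamma\beta+d-d\gamma+\alpha(1-\gamma)<1$, and deduce the ball version via cylinders as in Corollary~\ref{c:self-similar set}. The only cosmetic difference is that you pull the divisor count out uniformly as $Q^{O(\epsilon)}$ and absorb it into the strict inequality, whereas the paper uses $d(|\bxi|)\le d(|\xi_1|)\cdots d(|\xi_d|)$ together with the divisor-weighted form of \eqref{eq:Fourier sum}.
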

\begin{proof}
	Note that the set of admissible choices for the parameters $\alpha$ and $\beta$ is nonempty, since $\frac{d}{d+1} < \gamma < 1$. Moreover, these choices imply
		\begin{equation}\label{eq:ab}
		d\gamma\beta+d-d\gamma+\alpha(1-\gamma)<1.
	\end{equation}

	  Write $\eta=\eta_1\cdots\eta_d$. By Lemma \ref{l:meaupp}, we have
	\[\begin{split}
		\sum_{1\le q\le Q}\mu_\boa\big(R(q,\bmeta)\big)&\ll Q\eta+\eta\cdot\sum_{\substack{\bxi\in\Z^d\setminus\{\bzero\}\\ \forall1\le j\le d, |\xi_j|\le 2/\eta_j}}\sum_{1\le q\le Q}|\widehat{\mu_\boa}(q\bxi)|.
	\end{split}\]
	Observe that for any $\bp\in\Z^d$, there are at most $d(|\bp|)$ many pairs $(\bxi,q)$ such that $q\bxi=\bp$, where $d(|\bp|)$ is the number of divisors of $|\bp|$, with $|\cdot|$ denoting the supremum norm, as before. Therefore,
	\[\sum_{\substack{\bxi\in\Z^d\setminus\{\bzero\}\\ \forall1\le j\le d, |\xi_j|\le 2/\eta_j}}\sum_{1\le q\le Q}|\widehat{\mu_\boa}(q\bxi)|\le \sum_{\substack{\bxi\in\Z^d\setminus\{\bzero\}\\ \forall1\le j\le d, |\xi_j|\le 2Q/\eta_j}}d(|\bxi|)\cdot|\widehat{\mu_\boa}(\bxi)|.\]
	By the assumption (A1), we have $\mu=\mu_1\times\cdots\times \mu_d$. Note that $d(|\bxi|)\le d(|\xi_1|)\cdots d(|\xi_d|)$, with the convention $d(0)=1$ so that the inequality remains valid when some $\xi_j=0$.
	 It follows that
	\[\begin{split}
		\sum_{\substack{\bxi\in\Z^d\setminus\{\bzero\}\\ \forall1\le j\le d, |\xi_j|\le 2Q/\eta_j}}d(|\bxi|)\cdot |\widehat{\mu_\boa}(\bxi)|&=\sum_{\substack{\bxi\in\Z^d\setminus\{\bzero\}\\ \forall1\le j\le d, |\xi_j|\le 2Q/\eta_j}}d(|\bxi|)\cdot |\widehat{\mu_{1,\omega_1}}(\xi_1)|\cdots |\widehat{\mu_{d,\omega_d}}(\xi_d)|\\
		&\le \prod_{j=1}^{d}\sum_{\substack{\xi\in\Z:|\xi|\le 2Q/\eta_j}}d(\xi)\cdot|\widehat{\mu_{j,\omega_j}}(\xi)|\\
		&\stackrel{\eqref{eq:branch Fourier}}{=}\prod_{j=1}^{d}\sum_{\substack{\xi\in\Z:|\xi|\le 2Q/\eta_j}}d(\xi)\cdot|\widehat{\mu_{j}}(b^{-k}\xi)|.
	\end{split}\]
	By the assumptions (A1) and (A2),  for $1\le j\le d$ we have $\fldim\mu_j>\gamma>\frac{d}{d+1}>\frac{1}{2}$. Applying \eqref{eq:Fourier sum}, we obtain
	\[\begin{split}
		\prod_{j=1}^{d}\sum_{\xi\in\Z:|\xi|\le 2Q/\eta_j}d(\xi)\cdot|\widehat{\mu_{j}}(b^{-k}\xi)|&\ll \prod_{j=1}^d b^{\gamma k}Q^{1-\gamma}\eta_j^{-(1-\gamma)}=b^{d\gamma k}Q^{d-d\gamma}\eta^{-(1-\gamma)}\\
		&\le Q^{d\gamma\beta} \cdot Q^{d-d\gamma}\cdot Q^{\alpha(1-\gamma)}\\
		&\stackrel{\eqref{eq:ab}}{<}Q.
	\end{split}\]
Combining these estimates, we obtain the first assertion of the lemma, namely \eqref{eq:non-divergence estimate}. Moreover, since $K$ satisfies the OSC, a similar argument as in the proof of Corollary \ref{c:self-similar set} shows that \eqref{eq:non-divergence estimate for B} also holds.
\end{proof}
We remark that, unlike the local counting property, we do not require that $\eta_1 \cdots \eta_d \le Q^{-1}$ in Lemma \ref{l:nondivergence}. Using the quantitative non-divergence estimate for missing digits measures (Lemma~\ref{l:nondivergence}), we establish the following result, which in the literature is referred to as {\em local ubiquity for rectangles} \cite{KleinbockWangadv,WangWuMTPrectangleMA}.
\begin{lemma}\label{l:ubiquitous}
	There exists a constant $c>0$, depending only on the implied constant in Lemma~\ref{l:nondivergence}, such that for any ball $B\subset K$, any $Q\in\N$ with $Q^{\beta}\ge |B|^{-1}$ (where $\beta$ is as in Lemma~\ref{l:nondivergence}), and any $\bmeta=(\eta_1,\dots,\eta_d)\in(0,1/2)^d$ satisfying $\eta_1\cdots\eta_d=Q^{-1}$, we have
	\[\mu\bigg(B\cap \bigcup_{q=cQ}^{2Q}R(q,\bmeta)\bigg)\asymp \mu(B).\]
	\end{lemma}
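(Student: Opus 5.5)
The upper bound $\ll\mu(B)$ is trivial. All the work is in the lower bound. I would prove it by a Dirichlet-type covering argument combined with the non-divergence estimate of Lemma~\ref{l:nondivergence}, in the spirit of the Bernik--Kleinbock--Margulis approach to ubiquity.

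\emph{Step 1: Minkowski.} Fix $\bx\in B$. By Minkowski's theorem on linear forms (the box $\{(q,\bp): |q|\le 2Q,\ |qx_j-p_j|\le \eta_j\}$ has volume $\asymp 2^{d+1}\cdot 2Q\eta_1\cdots\eta_d\ge 2^{d+1}$ once we use $\eta_1\cdots\eta_d=Q^{-1}$, possibly after a harmless constant rescaling of $\bmeta$), there is an integer $1\le q\le 2Q$ with $\|qx_j\|\le\eta_j$ for all $j$. Hence
\[B\subset \bigcup_{q=1}^{2Q}R(q,\bmeta).\]
The constant adjustment is absorbed into the implied constants.

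\emph{Step 2: remove small denominators.} It remains to show that the points for which the Dirichlet solution has $q<cQ$ carry only a small proportion of $\mu(B)$. If $\bx\in R(q,\bmeta)$ with $q<cQ$, then $\bx\in R(q,\bmeta)$ for some $q\le cQ$. Apply \eqref{eq:non-divergence estimate for B} with $Q$ replaced by $cQ$ and the same $\bmeta$. The hypotheses hold: $\eta_1\cdots\eta_d=Q^{-1}\ge (cQ)^{-\alpha}$ since $\alpha\ge1$; the product is $<2^{-d}$ for $Q$ large; and $|B|\ge Q^{-\beta}$ gives $|B|\gg (cQ)^{-\beta}$ up to a constant depending only on $c$, which we handle by shrinking $\beta$ slightly or by noting that constants only affect the implied constant. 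This yields
\[\mu\Big(B\cap\bigcup_{q\le cQ}R(q,\bmeta)\Big)\le C\,\mu(B)\cdot cQ\cdot Q^{-1}=Cc\,\mu(B).\]
Choosing $c=1/(2C)$ gives
\[\mu\Big(B\cap \bigcup_{q=cQ}^{2Q}R(q,\bmeta)\Big)\ge \mu(B)-\tfrac12\mu(B)=\tfrac12\mu(B).\]

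\emph{Main obstacle.} The delicate point is ensuring that Lemma~\ref{l:nondivergence} applies uniformly at scale $cQ$ and that its implied constant $C$ does not depend on $c$. Only then is the choice of $c$ non-circular. This works because Lemma~\ref{l:nondivergence} is stated for all $Q$ large, with a constant depending only on $\alpha$ and $\beta$. The ball condition $|B|\ge (cQ)^{-\beta}$ follows from $Q^\beta\ge|B|^{-1}$ since $(cQ)^{-\beta}\ge Q^{-\beta}$ when $c<1$; this is in fact the favourable direction, so no adjustment is needed there. The only remaining care is the Minkowski constant, handled by replacing $2Q$ by a fixed multiple if necessary, or equivalently by observing that $R(q,\bmeta)$ with the slightly enlarged $\bmeta$ changes nothing essential.
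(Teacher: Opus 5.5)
Your argument is the paper's: Minkowski's theorem gives $B\subset\bigcup_{q=1}^{2Q}R(q,\bmeta)$, and Lemma~\ref{l:nondivergence} at scale $cQ$ bounds the part with $q<cQ$ by $Cc\,\mu(B)$ with $C$ independent of $c$, so taking $c$ small finishes. In Step~1 no rescaling of $\bmeta$ is needed, which matters because the statement is for the given $\bmeta$. The box has volume exactly $2^{d+2}$, and $q\neq 0$ because $\eta_j<1/2$.

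One correction: the claim in your last paragraph is backwards. Since $c<1$, we have $(cQ)^{-\beta}>Q^{-\beta}$, so $|B|\ge (cQ)^{-\beta}$ does \emph{not} follow from $|B|\ge Q^{-\beta}$. Likewise, $Q^{-1}\ge (cQ)^{-\alpha}$ holds for large $Q$ only when $\alpha>1$, not when $\alpha=1$.

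Your remark in Step~2 is the right fix. The admissible ranges of $\alpha,\beta$ in Lemma~\ref{l:nondivergence} are open. So apply that lemma with parameters $\alpha'>1$ and $\beta'>\beta$, both admissible and fixed in advance, so that its constant $C$ still does not depend on $c$. Then $|B|\ge Q^{-\beta}\ge (cQ)^{-\beta'}$ and $Q^{-1}\ge (cQ)^{-\alpha'}$ hold once $Q$ is large in terms of $c$. Alternatively, note that these constant factors are absorbed in the proof of Lemma~\ref{l:nondivergence}. The paper's proof passes over this point without comment as well.
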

\begin{proof}
	We prove only the lower bound in the lemma, since the upper bound is immediate.

	 Since $\eta_1\cdots\eta_d=Q^{-1}$, by Minkowski's linear forms theorem in the geometry of numbers, for any $\bx\in B$ and $Q\in\N\setminus\{0\}$ there is a solution $(q,\bp)\in\Z^{d+1}\setminus\{\bzero\}$ to the system
	\[\begin{cases}
		|qx_1-p_1|<\eta_1\\
		\quad\vdots\\
		|qx_d-p_d|<\eta_d\\
		|q|\le 2Q.
	\end{cases}\]
Note that if $(q,\mathbf{p})$ is a solution, then so is $-(q,\mathbf{p})$. It then follows that
\[\mu\bigg(B\cap \bigcup_{q=1}^{2Q}R(q,\bmeta)\bigg)= \mu(B).\]
On the other hand, the assumptions of the lemma ensure that Lemma~\ref{l:nondivergence} is applicable. By Lemma \ref{l:nondivergence}, for any $c>0$ we have
\[\mu\bigg(B\cap \bigcup_{q=1}^{cQ}R(q,\bmeta)\bigg)\ll \mu(B)\cdot cQ\eta_1\cdots\eta_d=c\mu(B).\]
Choosing $0<c<2$ sufficiently small (depending only on the above implied constant), we may ensure that
\[\mu\bigg(B\cap \bigcup_{q=1}^{cQ}R(q,\bmeta)\bigg)\le \mu(B)/2.\]
Combining the above estimates, we obtain
\[\mu\bigg(B\cap \bigcup_{q=cQ}^{2Q}R(q,\bmeta)\bigg)\ge \mu\bigg(B\cap \bigcup_{q=1}^{2Q}R(q,\bmeta)\bigg)-\mu\bigg(B\cap \bigcup_{q=cQ}^{2Q}R(q,\bmeta)\bigg)\ge\mu(B)/2,\]
which establishes the desired lower bound.
\end{proof}
In order to extract a disjoint subfamily from a collection of rectangles whose suitable enlargements cover the original family, we shall rely on the following rectangular version of the classical $5r$-covering lemma.

\begin{lemma}[$5r$-covering lemma for rectangles, {\cite[Lemma 5.1]{LiLiaoVelaniWangZorinMTPadv}}]\label{l:5r covering}
	For each $1\le j\le d$, let $K_j$ be a locally compact subset of $\R$ and let $\{x_{j,n}\}_{n\ge 1}$ be a sequence of points in $K_j$. For any $r>0$ and any  $(u_1,\dots,u_d)\in (\R^+)^d$, the family
	\[\cg:=\bigg\{\prod_{j=1}^{d}B(x_{j,n},r^{u_j}):n\ge 1\bigg\}\] of rectangles admits a subfamily $\cg_1$ of disjoint rectangles satisfying
	\[\bigcup_{R\in\cg}R\subset\bigcup_{R\in\cg_1} 5^{\frac{\max_{1\le j\le d}u_j}{\min_{1\le j\le d}u_j}}R,\]
	where $tR$ denotes the rectangle obtained by enlarging the side lengths of $R$ by a factor of $t$, while keeping its center fixed.
\end{lemma}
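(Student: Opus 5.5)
Since all rectangles in $\cg$ are translates of one another, we do not expect to need the general Vitali machinery with its varying radii. Our plan is a greedy selection along the index $n$, followed by an elementary containment argument in each coordinate. The containment will give the factor $3$, which is at most the constant $5^{\max_j u_j/\min_j u_j}$ in the statement because $\max_j u_j/\min_j u_j\ge 1$.

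For the selection, write $R_n:=\prod_{j=1}^{d}B(x_{j,n},r^{u_j})$, so that every $R_n$ has side lengths $2r^{u_1},\dots,2r^{u_d}$. We define $\cg_1$ inductively. Put $R_1\in\cg_1$. Having decided the status of $R_1,\dots,R_{n-1}$, we put $R_n$ into $\cg_1$ if and only if it is disjoint from every rectangle already selected. The family $\cg$ is countable, so this induction (no Zorn's lemma needed) produces a subfamily $\cg_1$ of pairwise disjoint rectangles.

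For the covering, take any $R_n\in\cg$. Either $R_n\in\cg_1$, in which case there is nothing to prove. Otherwise, by construction, $R_n$ meets some selected rectangle $R_m$ with $m<n$. Intersection of two products means intersection in each coordinate. Hence, for every $1\le j\le d$, we have $|x_{j,n}-x_{j,m}|<2r^{u_j}$. Then for any $\by\in R_n$ and each $j$,
\[
|y_j-x_{j,m}|\le |y_j-x_{j,n}|+|x_{j,n}-x_{j,m}|<3r^{u_j}.
\]
So $R_n\subset\prod_{j=1}^{d}B(x_{j,m},3r^{u_j})=3R_m$. Since $5^{\max_j u_j/\min_j u_j}\ge 5>3$, we conclude $R_n\subset 5^{\frac{\max_j u_j}{\min_j u_j}}R_m$. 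Taking the union over all $n$ gives the claimed inclusion.

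We do not expect a genuine obstacle here. The only point needing care is that the intersection argument is carried out coordinatewise with the common side lengths $r^{u_j}$. This is exactly why the equal-shape assumption, a single $r$ and $(u_1,\dots,u_d)$ shared by all rectangles, makes the constant $3$ suffice. The local compactness of the $K_j$ is not used in this argument. The larger constant $5^{\max_j u_j/\min_j u_j}$ in the statement would be what one gets from the general $5r$-lemma applied to the quasi-metric $\max_j|x_j-y_j|^{1/u_j}$, for which these rectangles are balls of radius $r$. Here we only need the weaker inclusion, which follows from the sharper one.
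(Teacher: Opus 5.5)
Your proof is correct. Every rectangle in $\mathcal G$ has the same side lengths $2r^{u_1},\dots,2r^{u_d}$. So the greedy selection along $n$ yields a disjoint $\mathcal G_1$ in which each rejected $R_n$ meets some selected $R_m$. The coordinatewise triangle inequality then gives $R_n\subset 3R_m\subset 5^{\max_j u_j/\min_j u_j}R_m$.

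The paper gives no argument here; it simply imports the lemma from \cite[Lemma 5.1]{LiLiaoVelaniWangZorinMTPadv}. Your route gives a self-contained two-line proof with the sharper constant $3$, and it does not use local compactness.

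What a general Vitali-type statement buys is tolerance of varying sizes. The constant $5^{\max_j u_j/\min_j u_j}$ is exactly what a selection by size produces when the radius parameter varies from rectangle to rectangle. To see this, normalise $\min_j u_j=1$. A rectangle $\prod_j B(x_{j,n},s_n^{u_j})$ meeting a selected $\prod_j B(x_{j,m},s_m^{u_j})$ with $s_n<2s_m$ lies in $\prod_j B(x_{j,m},(2^{u_j+1}+1)s_m^{u_j})$, and $2^{u+1}+1\le 5^{u}$ for $u\ge 1$.

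This matters here. In the proof of Theorem~\ref{t:example3}, the lemma is applied to rectangles whose side lengths carry a factor $1/q$ with $cQ\le q\le 2Q$. These are not exact translates of one another, so the statement as written does not literally cover that use.

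Your argument handles it with a one-line change. Suppose that in each coordinate all side lengths lie within a factor $\lambda$ of one another. Then the same greedy selection gives $R_n\subset(1+2\lambda)R_m$. A fixed enlargement factor is all that the subsequent comparison $\mu(R)\asymp\mu(tR)$ needs.
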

We are now ready to prove the lower bound for $\hdim(K\cap W_d(\psi_\tau,\bzero))$.

\begin{lemma}
	There exists $1/d<\alpha<1/(d-1)$ such that
	\[\hm^{\delta+\frac{1+d}{1+\tau}-d}\big(K\cap E_d(\psi_\tau,\bzero)\big)=\infty,\quad\text{for any $\tau\in(1/d,\alpha)$}.\]
\end{lemma}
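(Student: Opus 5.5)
We plan to prove that $K\cap W_d(\psi_\tau,\bzero)\in\scgm^{s}(K)$ with $s=\delta+\frac{1+d}{1+\tau}-d$, and then remove the sets of larger exponent exactly as in the proof of Corollary \ref{c:corollary} (2). Lemma \ref{l:upper bound missing} gives $\hdim(K\cap W_d(\psi_{\tau+1/n},\bzero))<s$ for each $n$. Hence $\hm^s(K\cap E_d(\tau,\bzero))\ge\hm^s(K\cap W_d(\psi_\tau,\bzero))=\hm^s(K)=\infty$. The membership in $\scgm^s(K)$ will come from Theorem \ref{t:weaken} (2), applied to the limsup of open sets built from the rectangles $R(q,\bmeta)$.

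The construction follows the strategy of Remark \ref{r:higher dimension}: we shrink only in the $d$-th direction. Fix a ball $B\subset K$ and a large $Q$ with $Q^{-\beta}\le|B|$. Take $\eta_1=\cdots=\eta_d=Q^{-1/d}$, so that $\eta_1\cdots\eta_d=Q^{-1}$. By Lemma \ref{l:ubiquitous}, the rectangles $R(q,\bmeta)$ with $cQ\le q\le 2Q$ meeting $B$ carry a proportion $\asymp\mu(B)$ of $\mu(B)$. These are cubes of side $\asymp Q^{-1-1/d}$ centred at $\bp/q$.

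We then apply Lemma \ref{l:5r covering}, here with equal side lengths, to extract a disjoint subfamily $\cg_1$ whose $5$-fold enlargements still cover. Consequently $\mu(\bigcup_{\cg_1}R)\gg\mu(B)$. By Ahlfors regularity, each such cube has $\mu$-mass $\asymp Q^{-(1+1/d)\delta}$, so $\#\cg_1\asymp\mu(B)Q^{(1+1/d)\delta}$.

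Inside each $R\in\cg_1$ we keep the first $d-1$ sides $Q^{-1-1/d}$, with $x_j\in K_j$. In the last coordinate we shrink to an interval of radius $Q^{-1-\tau}\cdot Q^{(d-1)(\tau-1/d)}$, chosen so that $\|qx_j\|\le Q^{-\tau}$ for every $j$. Because $K_d=[0,1]$, this thin slab $R'$ still meets $K$ in a set of full product structure, namely $(K_1\times\cdots\times K_{d-1})$-part times an interval. A simpler choice is the cube of side $Q^{-1-\tau}$ around $\bp/q$ intersected appropriately.

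The exact shape needs the following care. The target set is $\{\|q\bx\|<q^{-\tau}\}$, i.e.\ a ball of radius $q^{-1-\tau}$ around $\bp/q$. The first $d-1$ coordinates of $\bp/q$ need not lie in $K_j$. So we instead use rectangles with sides $\eta_j$ chosen non-uniformly. We take $\eta_1=\cdots=\eta_{d-1}=Q^{-\tau}$ and $\eta_d=Q^{-1+(d-1)\tau}$, which keeps the product $Q^{-1}$; this is why $\alpha<1/(d-1)$ is needed, to ensure $\eta_d<1/2$. Lemma \ref{l:ubiquitous} then gives that points with $\|qx_j\|\le Q^{-\tau}$ for $j<d$ fill a positive proportion of $B$. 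Within each such rectangle, the full interval in the $d$-th coordinate lets us shrink $\eta_d$ to $Q^{-\tau}$ while still meeting $K$ in a set of comparable relative structure.

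Let $E_Q$ be the union of the resulting open sets. We define $\nu$ on $E_Q\cap B$ by distributing $\mu(B)$ equally among the rectangles of $\cg_1$. Inside each rectangle we take the normalized restriction of $\mu$ to $K\cap(\prod_{j<d}B(\cdot,Q^{-1-\tau})\times B(\cdot,Q^{-1-\tau}))$.

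The main step is then the mass distribution estimate $\nu(B(\bx,r))\ll r^s/\mu(B)$ for all $r$. This is checked in ranges, as in Cases 1--5B, with $\hc^s(E_Q\cap B)\gg\mu(B)$ then following from Proposition \ref{p:MDP}.

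For $r$ below $Q^{-1-\tau}$, we use Ahlfors regularity of the piece. For $r$ between $Q^{-1-\tau}$ and the side of the $\cg_1$-rectangles, the ball sees one rectangle, and we count slabs using the product structure with full interval. For larger $r$, the ball meets $\ll\mu(B(\bx,r))/Q^{-(\cdots)\delta}$ disjoint rectangles. This count uses the disjointness from Lemma \ref{l:5r covering} together with Ahlfors regularity, while the upper bound $\mu(B(\bx,r)\cap\bigcup R)\ll\mu(B(\bx,r))$ for $r\ge Q^{-\beta}$ comes from \eqref{eq:non-divergence estimate for B}.

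I expect the intermediate range, $r$ between the rectangle scale and $Q^{-\beta}$, to be the main obstacle. There only disjointness is available, not equidistribution. I would handle it by interpolation between the two endpoint bounds, exploiting that the exponent $s$ is below $\delta$. Taking $\alpha$ close to $1/d$ makes the loss at these scales negligible, and $\alpha$ is chosen small enough that all these exponent inequalities hold.
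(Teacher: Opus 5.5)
\textbf{There is a genuine gap, in the Frostman estimate.} Your final construction is the paper's: $\bmeta\asymp(Q^{-\tau},\dots,Q^{-\tau},Q^{-1+(d-1)\tau})$, Lemma~\ref{l:ubiquitous}, the rectangular $5r$-covering, shrinking only in the $d$-th coordinate, and equal weights on the shrunk pieces. The first, cube-based attempt should be discarded, since cubes centred at $\bp/q\notin K$ get no mass $\asymp Q^{-(1+1/d)\delta}$ from Ahlfors regularity. The gap is in the bound $\nu(B(\bx,r))\ll r^s/\mu(B)$, which is the heart of the proof, and your sketch gets it wrong exactly where it is delicate.

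Write $\ell\asymp Q^{-1-\tau}$ and $L\asymp Q^{-2+(d-1)\tau}$ for the short and long sides of the rectangles $R\in\cg_1$. Then $\mu(R)\asymp\ell^{\delta-1}L$, and ubiquity gives $\#\cb\asymp\mu(B)/\mu(R)$. For $\ell\le r<L$, the ball $B(\bx,r)$ does not see just one rectangle. The rectangles are only $\ell$ wide in the first $d-1$ directions, so a packing count bounds the number meeting $B(\bx,r)$ only by $\ll(r/\ell)^{\delta-1}$. This gives $\nu(B(\bx,r))\ll r^{\delta-1}L/\mu(B)$. Only the inequality $L=\ell^{\frac{2-(d-1)\tau}{1+\tau}}\le r^{\frac{2-(d-1)\tau}{1+\tau}}$ turns this into exactly $r^{\delta-1+\frac{2-(d-1)\tau}{1+\tau}}=r^{s}$, with no room to spare. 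This is where the critical exponent is produced, and the one-rectangle heuristic hides it.

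\textbf{The range you call the main obstacle is not an obstacle, and your remedy would break the proof.} That range is $L\le r\le Q^{-\beta}$. For every $r\ge L$, disjointness of $\cg_1$ alone shows that at most $\ll\mu(B(\bx,3r))/\mu(R)$ rectangles meet $B(\bx,r)$. Dividing by $\#\cb\asymp\mu(B)/\mu(R)$ gives $\nu(B(\bx,r))\ll r^\delta/\mu(B)\le r^s/\mu(B)$ directly, all the way up to $r=|B|$.

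No equidistribution at scale $r$ is needed. The parameter $\beta$ enters only through the hypothesis $|B|\ge Q^{-\beta}$ of Lemma~\ref{l:ubiquitous}. Likewise, \eqref{eq:non-divergence estimate for B} is used only inside that lemma, to discard denominators $q<cQ$. Your use of it to bound $\mu(B(\bx,r)\cap\bigcup R)$ is vacuous, and that quantity is not $\nu$ anyway.

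This is exactly what equal weights on disjoint rectangles buy over the measure $\mu|_F/\mu(F)$ of Theorem~\ref{t:main}, whose density at scale $r$ genuinely requires local counting. An ``interpolation'' that loses anything only yields a Frostman exponent $s'<s$. That gives content bounds for $s'$ and at best dimension information via $\scgd$. It can never give $\hm^{s}=\infty$ at the exact exponent, and taking $\alpha$ close to $1/d$ does not remove a loss for any fixed $\tau$.
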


\begin{proof}
By the upper bound established in Lemma~\ref{l:upper bound missing}, together with the argument used in the proof of Corollary~\ref{c:corollary} (2) (see Section~\ref{s:corollary}), it suffices to show that
	\begin{equation}\label{eq:full measure}
		\hm^{\delta+\frac{1+d}{1+\tau}-d}\big(K\cap W_d(\psi_\tau,\bzero)\big)=\infty.
	\end{equation}
	Recall from \eqref{eq:A(q,eta,theta)} that
	$A(q,\eta,\bzero)=\{\bx\in\R^d:\|q\bx\|<\eta\}$.
	Since
	\[\begin{split}
		\bx\in W_d(\psi_\tau,\bzero)\quad&\Longleftrightarrow\quad \bx\in A(q,q^{-\tau},\bzero)\text{ for infinitely many $q\in\N$}\\
		&\Longleftrightarrow\quad \bx\in \bigcup_{q=Q}^{\infty}A(q,q^{-\tau},\bzero)\text{ for infinitely many $Q\in\N$},
	\end{split}\]
	we have
	\[W_d(\psi_\tau,\bzero)=\limsup_{Q\to\infty}\bigcup_{q=Q}^{\infty}A(q,q^{-\tau},\bzero)\]
	By Theorem \ref{t:weaken} (2), \eqref{eq:full measure} follows if
	\begin{equation}\label{eq:content bound missing}
		\limsup_{Q\to\infty}\hc^{\delta+\frac{1+d}{1+\tau}-d}\bigg(B\cap \bigcup_{q=Q}^{\infty}A(q,q^{-\tau},\bzero)\bigg)\gg \mu(B)
	\end{equation}
	holds for all ball $B$ in $K$.

	 Since $1/d<\tau<\alpha<1/(d-1)$, we have
	 \begin{equation}\label{eq:1-(d-1)tau<tau}
	 	0<1-(d-1)\tau<1/d<\tau.
	 \end{equation}
	 Let $Q$ be sufficiently large so that Lemma~\ref{l:ubiquitous} is applicable. For
	 \[\bmeta=(Q^{-\tau}/4,\dots,Q^{-\tau}/4, 4^{d-1}Q^{-1+(d-1)\tau}),\]
	 we have $\eta_1\cdots\eta_d=Q^{-1}$ and so by Lemma \ref{l:ubiquitous},
	\begin{equation}\label{eq:compare mu(B)}
		\mu\bigg(B\cap \bigcup_{q=cQ}^{2Q}R(q,\bmeta)\bigg)\asymp \mu(B),
	\end{equation}
	where $c>0$ is as in Lemma \ref{l:ubiquitous}.
	Let $\cq=\cq(B)$ be the collection of rational vectors $\bp/q\in\Q^d$ such that \[B\cap \left(\Bigg(\prod_{j=1}^{d-1}B\bigg(\frac{p_j}{q}, \frac{Q^{-\tau}}{4q}\bigg)\Bigg)\times B\bigg(\frac{p_d}{q},\frac{4^{d-1}Q^{-1+(d-1)\tau}}{q}\bigg)\right)\ne\varnothing.\]
	Although the above intersection may be very small, doubling the side lengths of the rectangles ensures that the resulting intersection is sufficiently large for our purposes. More precisely, the intersection
	\begin{equation}\label{eq:intersection}
		B\cap \left(\Bigg(\prod_{j=1}^{d-1}B\bigg(\frac{p_j}{q}, \frac{Q^{-\tau}}{2q}\bigg)\Bigg)\times B\bigg(\frac{p_d}{q},\frac{4^{d}Q^{-1+(d-1)\tau}}{2q}\bigg)\right)
	\end{equation}
	contains a rectangle of the form
	\begin{equation}\label{eq:contain a rectangle}
		\Bigg(\prod_{j=1}^{d-1}B\bigg(x_j, \frac{Q^{-\tau}}{4q}\bigg)\Bigg)\times B\bigg(\frac{p_d}{q},\frac{4^{d-1}Q^{-1+(d-1)\tau}}{q}\bigg)
	\end{equation}
	for some $\bx=(x_1,\dots,x_{d-1},p_d/q)\in B$. Here, the choice of the last coordinate of $\bx$ as $p_d/q$ is possible since $K_d=[0,1]$. Let $\cg$ denote the collection of all rectangles in \eqref{eq:contain a rectangle} obtained by ranging over $\bp/q\in\cq$. By the $5r$-covering lemma for rectangles (Lemma \ref{l:5r covering}), there exists a subfamily $\cg_1\subset\cg$ consisting of disjoint rectangles such that
	\[\bigcup_{R\in\cg}R\subset\bigcup_{R\in\cg_1} 5^{\frac{1+\tau}{(d-1)\tau}}R.\]
	By the product structure of $\mu$ (that is, $\mu=\mu_1\times\cdots\times\mu_d$) and the $\delta_j$-Ahlfors regularity of each factor measure $\mu_j$, we have $\mu(R)\asymp\mu(5^{\frac{1+\tau}{(d-1)\tau}}R)$. The disjointness of the rectangles in  $\cg_1$
	then allows us to estimate the measure of their union
	\[\begin{split}
		\mu\bigg(\bigcup_{R\in\cg_1}R\bigg)&=\sum_{R\in\cg_1}\mu(R)\asymp\sum_{R\in\cg_1}\mu\big(5^{\frac{1+\tau}{(d-1)\tau}}R\big)\ge \mu\bigg(\bigcup_{R\in\cg_1} 5^{\frac{1+\tau}{(d-1)\tau}}R\bigg)\\
		&\ge \mu\bigg(\bigcup_{R\in\cg} R\bigg)\stackrel{\eqref{eq:compare mu(B)}}{\gg}\mu(B).
	\end{split}\]
	Let $\cx$ denote the collection of centers of the rectangles in $\mathcal{G}_1$. By the disjointness of these rectangles and a simple volume argument,
	\begin{equation}\label{eq:number}
		\#\cx\asymp \frac{\mu(B)}{Q^{-2+(d-1)\tau}\cdot \prod_{j=1}^{d-1}Q^{-(1+\tau)\delta_j}}=\mu(B)\cdot Q^{(1+\tau)(\delta-1)+2-(d-1)\tau},
	\end{equation}
	where the equality follows from $\delta_1+\cdots+\delta_d=\delta$ and $\delta_d=1$.

	Note from \eqref{eq:1-(d-1)tau<tau} that $1-(d-1)\tau<\tau$. We shrink each rectangle in $\mathcal{G}_1$ in the $d$-th direction to obtain a family of well-separated balls, defined by
	\[\bigcup_{\bx\in\cx}B\bigg(\bx, \frac{Q^{-\tau}}{4q}\bigg)=:F.\]
	Denote the collection of these balls by $\cb$.
	Since the centres of the balls in $\mathcal{B}$ are of the form $(x_1,\dots,x_{d-1},p_d/q)\in B$ (see \eqref{eq:contain a rectangle}) and $K_d=[0,1]$, it follows from \eqref{eq:contain a rectangle} that for any $\bx\in\cx$,
	\[B\cap B\bigg(\bx, \frac{Q^{-\tau}}{4q}\bigg)\subset B\cap B\bigg(\frac{\bp}{q}, \frac{Q^{-\tau}}{2q}\bigg).\]
	Consequently,
	\[F\subset B\cap \bigcup_{q=cQ}^{2Q}A(q,q^{-\tau},\bzero).\]

	Define a probability measure $\nu$ supported on $F$ by
	\begin{equation}\label{eq:nu2}
		\nu=\sum_{D\in\cb}\frac{1}{\#\cb}\cdot\frac{\mu|_D}{\mu(D)}.
	\end{equation}
	Clearly, we have
	\begin{equation}\label{eq:number cb}
		\#\cb=\#\cx\asymp\mu(B)\cdot Q^{(1+\tau)(\delta-1)+2-(d-1)\tau}.
	\end{equation}

	Next, we estimate the $\nu$-measure of an arbitrary ball. Let $\bx \in F$ and let $0<r<|B|$. We proceed by considering several ranges of $r>0$.

	\noindent {\bf Case 1}: $Q^{-(1+\tau)}\le r<|B|$. Since the collection $\mathcal{B}$ of balls is obtained by shrinking the rectangles in $\mathcal{G}_1$ along the $d$-th coordinate, and since $r\ge Q^{-(1+\tau)}$ exceeds the minimal side lengths of these rectangles, it follows that the ball $B(\bx,r)$ can intersect at most
\begin{align}
	&\asymp\frac{r^{\delta_1+\cdots+\delta_{d-1}}}{Q^{-(1+\tau)(\delta_1+\dots+\delta_{d-1})}}\cdot \bigg(\frac{r}{Q^{-2+(d-1)\tau}}+2\bigg)\notag\\
	&\asymp r^{\delta-1} Q^{(1+\tau)(\delta-1)}\cdot\max \big\{rQ^{2-(d-1)\tau},2\big\}\label{eq:max}
\end{align}
	rectangles in $\cg_1$.  Consequently, this also provides an upper bound on the number of balls in $\cb$ that can intersect $B(\bx,r)$.

	If the maximum in \eqref{eq:max} is $rQ^{2-(d-1)\tau}$, then we have
	\[\nu\big(B(\bx,r)\big)\stackrel{\eqref{eq:nu2}, \eqref{eq:number cb}}{\ll} \frac{r^\delta Q^{(1+\tau)(\delta-1)+2-(d-1)\tau}}{\mu(B)\cdot Q^{(1+\tau)(\delta-1)+2-(d-1)\tau}}=\frac{r^\delta}{\mu(B)}\ll\frac{r^{\delta+\frac{1+d}{1+\tau}-d}}{\mu(B)},\]
	since $\tau>1/d$.

	Otherwise, suppose that the maximum in \eqref{eq:max} is $2$. Then, we have
	\[\begin{split}
		\nu\big(B(\bx,r)\big)&\stackrel{\eqref{eq:nu2}, \eqref{eq:number cb}}{\ll} \frac{2r^{\delta-1} Q^{(1+\tau)(\delta-1)}}{\mu(B)\cdot Q^{(1+\tau)(\delta-1)+2-(d-1)\tau}}\asymp\frac{r^{\delta-1}Q^{-(2-(d-1)\tau)}}{\mu(B)}.
	\end{split}\]
	Since $\tau<1/(d-1)$, we have $2-(d-1)\tau>1>0$. By $Q^{-(1+\tau)}\le r$, we have
	\[Q^{-(2-(d-1)\tau)}=\big(Q^{-(1+\tau)}\big)^{\frac{2-(d-1)\tau}{1+\tau}}\le r^{\frac{2-(d-1)\tau}{1+\tau}}.\]
	Therefore,
	\[\nu\big(B(\bx,r)\big)\ll \frac{r^{\delta-1+\frac{2-(d-1)\tau}{1+\tau}}}{\mu(B)}=\frac{r^{\delta+\frac{1+d}{1+\tau}-d}}{\mu(B)}.\]
	\noindent {\bf Case 2}: $0< r< Q^{-(1+\tau)}$.  For any ball $D\in\cb$, we have
	\[\begin{split}
		\nu\big(B(\bx,r)\cap D\big)&\ll \frac{1}{\#\cb}\cdot \frac{\mu\big(B(\bx,r)\big)}{\mu(D)}\stackrel{\eqref{eq:number cb}}{\ll} \frac{1}{\mu(B)\cdot Q^{(1+\tau)(\delta-1)+2-(d-1)\tau}}\cdot\frac{r^\delta}{Q^{-(1+\tau)\delta}}\\
		&=\frac{r^\delta Q^{d\tau-1}}{\mu(B)}\le \frac{r^{\delta+\frac{1-d\tau}{1+\tau}}}{\mu(B)}=\frac{r^{\delta+\frac{1+d}{1+\tau}-d}}{\mu(B)}.
	\end{split} \]
	Since the balls in $\cb$ are pairwise disjoint, the ball $B(\bx,r)$ can intersect at most $\asymp 1$ balls in $\cb$. Consequently, we obtain
	\[\nu\big(B(\bx,r)\big)\ll \frac{r^{\delta+\frac{1+d}{1+\tau}-d}}{\mu(B)}.\]

	By Cases 1 and 2 above, we have
\[\nu\big(B(\bx,r)\big)\ll \frac{r^{\delta+\frac{1+d}{1+\tau}-d}}{\mu(B)}.\]
Note that $\nu$ is supported on  $F\subset B\cap \bigcup_{q=cQ}^{2Q}A(q,q^{-\tau},\bzero)$. Therefore, by mass distribution principle (see Proposition \ref{p:MDP}),
\[\hc^{\delta+\frac{1+d}{1+\tau}-d}\bigg(B\cap \bigcup_{q=cQ}^{2Q}A(q,q^{-\tau},\bzero)\bigg)\gg\mu(B).\]
Since this holds for any $Q$ sufficiently large $Q$, the lower bound in \eqref{eq:content bound missing} follows, which completes the proof the lemma.
\end{proof}

\subsection*{Acknowledgements}
Y. He was supported by the NSFC (No. 12401108) and partially by a grant from the Guangdong Provincial Department of Education (2025KCXT\\ D013).

\bibliographystyle{abbrv}

\bibliography{bibliography}

\end{document}